\title[An extended Demazure product]{An extended Demazure product on integer permutations via min-plus matrix multiplication}
\date{\today}
\newif\ifshowleanrefs
\definecolor{leanlistingbg}{gray}{0.96}
\definecolor{leanlistingrule}{gray}{0.70}
\newcommand{\doclinkIcon}{%
  \textsuperscript{\fontsize{4pt}{4pt}\selectfont\faIcon{external-link-alt}}%
}
\NewDocumentCommand{\doclinkText}{m}{\doclink_text:n { #1 }}
\NewDocumentCommand{\doclink}{O{}m}{\doclink_href:nn { #1 } { #2 }}
\newtcbox{\leanboxframe}{%
  on line,
  enhanced,
  colback=leanlistingbg,
  colframe=leanlistingrule,
  boxrule=0.35pt,
  arc=2pt,
  left=3pt,
  right=3pt,
  top=1pt,
  bottom=1pt,
  boxsep=0pt
}
\newcommand{\leanbox}[1]{%
  \ifshowleanrefs\leanboxframe{\tiny #1}\fi
}
\NewDocumentCommand{\leanlink}{O{}m}{%
  \leanbox{\doclink[#1]{#2}}%
}
\newcommand{\leanEquation}[3]{%
  \begin{equation*}
  \refstepcounter{equation}\label{#1}%
  \makebox[0pt][l]{\normalfont\normalcolor(\theequation)}%
  \makebox[\displaywidth][c]{$\displaystyle #2$}%
  \makebox[0pt][r]{\leanbox{#3}}%
  \end{equation*}%
}
\newcommand{\leanTaggedEquation}[4]{%
  \begin{equation*}
  \renewcommand{\theequation}{#4}%
  \refstepcounter{equation}\label{#1}%
  \makebox[0pt][l]{\normalfont\normalcolor(\theequation)}%
  \makebox[\displaywidth][c]{$\displaystyle #2$}%
  \makebox[0pt][r]{\leanbox{#3}}%
  \end{equation*}%
}
\newcommand{\eqnum}[1]{%
  \refstepcounter{equation}\ltx@label{#1}%
  \normalfont\normalcolor(\theequation)%
}
\DeclareMathOperator{\ess}{Ess}
\newcommand{\asp}{\operatorname{ASP}}
\newcommand{\Inv}{\operatorname{Inv}}
\newcommand{\slip}{\operatorname{SF}}
\newcommand{\sa}{s_\alpha}
\newcommand{\sbe}{s_\beta}
\newcommand{\sai}{s_{\alpha^{-1}}}
\newcommand{\sbei}{s_{\beta^{-1}}}
\newcommand{\ca}{\chi_\alpha}
\newcommand{\cb}{\chi_\beta}
\newcommand{\sab}{s_{\alpha \star \beta}}
\newcommand{\sabl}{s_{\alpha \resL \beta}}
\newcommand{\si}[1]{s_{\iota_{#1}}}
\newcommand{\ts}{\widetilde{S}}
\newcommand{\mab}{M_{\alpha \star \beta}}
\newcommand{\mtab}{M_{\alpha \resL \beta}}
\newcommand{\resR}{\triangleright}
\newcommand{\resL}{\triangleleft}
\newcommand{\reduced}[1]{\mathbin{{#1}_{\mathrm{red}}}}
\newcommand{\starr}{\reduced{\star}}
\newcommand{\resLr}{\reduced{\resL}}
\newcommand{\tsig}{\widetilde{\sigma}}
\newcommand{\lechi}{\leq_\chi}
\newcommand{\Out}{\operatorname{Out}}
\newcommand{\In}{\operatorname{In}}
\theoremstyle{definition} 
\newtheorem{warning}[thm]{Warning}
\begin{document}

\begin{abstract}
Coxeter groups possess an associative operation, called variously the Demazure, greedy, or $0$-Hecke product. For symmetric groups, this product has an amusing formulation, due to Tiskin, as matrix multiplication in the min-plus (tropical) semiring of two matrices associated to the permutations. We prove that this min-plus formulation extends to furnish a Demazure product on a much larger group of integer permutations, consisting of all permutations that change the sign of finitely many integers. We prove several alternative descriptions of this product and some useful properties. These results were developed in service of Brill--Noether theory of algebraic and tropical curves; the connection is surveyed in an appendix. The main theorems of this paper have been fully formalized in Lean 4.
\end{abstract}

\maketitle


\section{Introduction}

The symmetric group $S_d$, and more generally any Coxeter group, possesses a useful associative operation $\star$, variously called the \emph{Demazure}, \emph{$0$-Hecke}, or \emph{greedy} product. If $\alpha \in S_d$ and $\sigma_n$ is the simple transposition of $n$ and $n+1$, then
\begin{equation}
\label{eq:demazureBasic}
\alpha \star \sigma_n = \begin{cases}
\alpha \sigma_n & \mbox{ if } \alpha(n) < \alpha(n+1),\\
\alpha & \mbox{ otherwise.}
\end{cases}
\end{equation}
Since $\star$ is associative and every $\beta \in S_d$ factors into simple transpositions, Equation \eqref{eq:demazureBasic} is enough to compute any Demazure product in $S_d$. A similar formulation is valid in any Coxeter group.

The Demazure product on $S_d$ has an equivalent formulation that is transparently independent of choice of factorization. To state it, first define, for any permutation $\alpha$, a counting function
$$\sa(a,b) = \# \{ \ell \geq b:\ \alpha(\ell) < a \}.$$
See Figure \ref{fig:sa} for an illustration. The Demazure product on $S_d$ then has the following characterization, 

\begin{equation}
\label{eq:demazureMin}
\sab(a,b) = \min_{1 \leq \ell \leq d+1} \Big[ \sa(a,\ell) + \sbe(\ell,b) \Big], \hspace{1cm} \mbox{for all } 1 \leq a,b \leq d+1.
\end{equation}

\begin{figure}
\begin{tikzpicture}[scale=0.2]
\fill[lightgray] (4.5,3.5) rectangle (12,-3);
\fill[lightgray] (4.5,3.5) rectangle (-3,12);

\draw[->] (-3,0) -- (12,0);
\draw[->] (0,-3) -- (0,12);

\node [ circle, inner sep=1pt] at (-3,-3) {};
\node [fill=black, circle, inner sep=1pt] at (-2,-2) {};
\node [fill=black, circle, inner sep=1pt] at (-1,-1) {};
\node [fill=black, circle, inner sep=1pt] at (0,0) {};
\node [fill=black, circle, inner sep=1pt] at (1,5) {};
\node [fill=black, circle, inner sep=1pt] at (2,6) {};
\node [fill=black, circle, inner sep=1pt] at (3,2) {};
\node [fill=black, circle, inner sep=1pt] at (4,8) {};
\node [fill=black, circle, inner sep=1pt] at (5,3) {};
\node [fill=black, circle, inner sep=1pt] at (6,9) {};
\node [fill=black, circle, inner sep=1pt] at (7,7) {};
\node [fill=black, circle, inner sep=1pt] at (8,4) {};
\node [fill=black, circle, inner sep=1pt] at (9,1) {};
\node [fill=black, circle, inner sep=1pt] at (10,10) {};
\node [fill=black, circle, inner sep=1pt] at (11,11) {};
\node [fill=black, circle, inner sep=1pt] at (12,12) {};

\draw[decoration={brace,mirror,raise=2pt},decorate]
  (12,-3) -- (12,3.5) node[midway,right] {$\ \sa(4,5) = 2$};
  
\draw[decoration={brace,mirror,raise=2pt},decorate] (-3,12) -- (-3,3.5) node[midway, left] {$\sai(5,4) = 3\ $};

\foreach \x in {1,...,9} {
\draw (\x,0.25) -- (\x,-0.25) node[below] {\tiny$\x$};
}
\foreach \y in {1,...,9} {
\draw (0.25,\y) -- (-0.25,\y) node[left] {\tiny$\y$};
}

\end{tikzpicture}

\caption{
Illustration of the permutation $\alpha \in S_9$ given in one-line notation by $5\ 6\ 2\ 8\ 3\ 9\ 7\ 4\ 1$, and the functions $\sa$ and $\sai$. By convention, the permutation is extended by the identity to a permutation $\ZZ \to \ZZ$.
}
\label{fig:sa}
\end{figure}
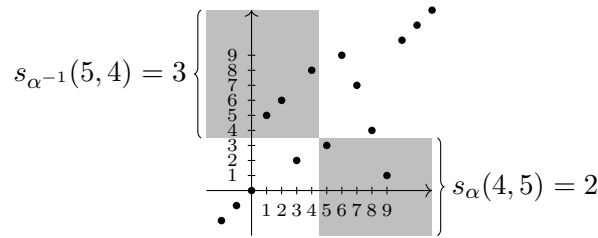

This min-plus formulation of $\star$ on $S_d$ was found in a different form by Tiskin \cite{tiskinMonge}, which also gave an efficient algorithm for its computation. Tiskin's point of view has also been generalized to some other infinite contexts including biwords on $\{1,\cdots,n\}$ \cite{erickson25}, permutons on $[0,1]$ \cite{defantPermutons}, and affine permutations \cite{gztStringComparison}. Other algorithmic extensions include \cite{ggkMonge}.

Functions like $\sa$ (perhaps with reversed inequalities or other minor modifications) are sometimes called ``rank functions;'' they provide a convenient way to state the strong Bruhat order on $S_d$ \cite[Theorem 2.1.5]{bjornerBrenti} and are used to define Schubert varieties in flag varieties \cite[\S 10]{fultonTableaux}. 
If we view the functions $\sa, \sbe$ as $(d+1) \times (d+1)$ matrices, Equation \eqref{eq:demazureMin} says that $\sab$ is obtained by matrix multiplication in the min-plus (tropical) semiring. It has a nice geometric interpretation in terms of relative position of flags; see Appendix \ref{app:geometric}. 

We will follow the convention that elements of $S_d$ are extended to permutations of $\ZZ$ that fix all $n \not\in [1,d]$. This does not affect the values $\sa(a,b)$ for $1 \leq a,b \leq d+1$, but it does allow an alternative form of Equation \eqref{eq:demazureMin} that does not depend on the choice of $d$:

\begin{equation}
\label{eq:demazureMinZ}
\sab(a,b) = \min_{\ell \in \ZZ} \Big[ \sa(a,\ell) + \sbe(\ell,b) \Big], \hspace{1cm} \mbox{for all } a,b \in \ZZ.
\end{equation}

This equation generalizes nicely to larger groups than $S_d$. For example, the \emph{affine symmetric groups $\widetilde{S}_k$} (see Example \ref{eg:extAffine} or Section \ref{ssec:extAffine} for a definition) are Coxeter groups embedded as subgroups of the permutations of $\ZZ$, and Equation \eqref{eq:demazureMinZ} holds, with $\sigma_n$ replaced by different generators, in $\widetilde{S}_k$ as well. A simple proof is in \cite{pflBriefTropBN} and is sketched in \cite{cpRR}; this also follows from this paper's results.

The present paper uses the min-plus description of $\star$ to generalize it to a much wider class of permutations. 
This generalization makes it possible to combine elements of $S_d$ and elements of $\widetilde{S}_k$ for different values of $k$. Though this may appear at first glance to be an idle curiosity, such ``mixed modulus'' Demazure products occur in the theory of divisors on graphs and algebraic curves. Indeed, the impetus to write this paper was originally to provide a self-contained reference for various facts needed for work on tropical and algebraic curves.
A brief discussion of these applications may be found in Appendix \ref{app:curvesGraphs}.

The group considered herein is the group of \emph{almost-sign-preserving permutations,} which are the largest class of permutations $\alpha: \ZZ \to \ZZ$ for which $\sa$ and $\sai$ both take only finite values.
This group is uncountable, and defining $\star$ in terms of a generating set is not feasible. We prove the existence of an associative product $\star$ in this context, defined by the min-plus formulation (Theorem \ref{thm:starExists}), and demonstrate that it shares several nice aspects of the Demazure product on $S_d$, including a characterization as a ``greedy product'' in Bruhat order (Theorem \ref{thm:starGreedy}) and a ``reduction theorem'' (Theorem \ref{thm:reduce}) for simplifying inequalities of the form $\alpha \star \beta \geq \gamma$.

We also study a slightly broader context than $\asp$, namely a set $\slip$ of functions which we call \emph{slipface functions} $s: \ZZ^2 \to \NN$. Then $\asp$ is embedded in $\slip$ by $\alpha \mapsto \sa$; the image of this embedding consists of \emph{submodular} slipface functions. The non-submodular slipface functions occur in tropical geometry, so there is some utility in this further generalization. The generalization from $\asp$ to $\slip$ is analogous to the generalization from permutations matrices to alternating sign matrices; see Remark \ref{rem:asm}.

It is plausible that many of this paper's results can be deduced from existing theory of Coxeter groups, via limit arguments. This would certainly be interesting to carry out, and indeed earlier drafts of this paper follow this approach in some parts (for example, Theorem \ref{thm:starGreedy} may be proved by multiplying simple transpositions one at a time and applying Zorn's lemma). However, this paper avoids this approach, and instead frames everything in terms of the functions $\sa$ and Equation \eqref{eq:demazureMinZ}. This reveals some intriguing combinatorics hiding in the min-plus matrix multiplication expression, and also means that this paper is entirely self-contained.

The main theorems of this paper, and many but not yet all of the auxiliary results, have been formally verified in the Lean 4 proof assistant. 
The source code is available at the following repository, which also contains instructions for building and verifying the code. 
\begin{center}
\url{https://github.com/npflueger/demazure}
\end{center}
\ifshowleanrefs%
This paper also contains links to documentation of the formalizations of specific results, which are formatted like this, and shown near the right margin: \leanlink{AspPerm.AspPerm}.%
\fi
This project is still in development, with the objective eventually of using it as a tool with which to formalize aspects of combinatorial Brill--Noether theory. I welcome collaborators who wish to contribute or build upon the code.

\subsection{Conventions and key definitions}

The symbol $\NN$ denotes the set of \emph{nonnegative} numbers. The symbol $\delta$ is used for an indicator function, equal to $1$ if the statement within is true, and $0$ otherwise. A \emph{permutation} always refers to a bijective function $\ZZ \to \ZZ$. Whenever we refer to a permutation of $\{1, \cdots, d\}$, or any other finite set, we implicitly extend this to a permutation of all of $\ZZ$ that fixes every other integer; so $S_d$ is always implicitly understood to be embedded in the full permutation group of $\ZZ$.

Call a permutation $\alpha: \ZZ \to \ZZ$ \emph{almost-sign-preserving} if there are only finitely many $n$ such that exactly one of $n, \alpha(n)$ is negative. We denote the group of almost-sign-preserving permutations by $\asp$.
 For any $\alpha \in \asp$, define a function $\sa : \ZZ^2 \to \NN$ by
\begin{equation}
\label{eq:sa}
\sa(a,b) = \# \{ n \geq b: \alpha(n) < a \}.
\end{equation}
This function will be called (somewhat whimsically) the \emph{slipface function} of $\alpha$; see Section \ref{sec:aspPrelim}.

The \emph{shift} of a permutation $\alpha \in \asp$ is the number 
$$\ca = \sa(0,0) - \sai(0,0).$$
The importance of this number will become clear as we develop our results; some intuition about it is provided by observing that elements of $S_d$ have shift $0$, and more generally a permutation $\alpha$ with finitely many inversions satisfies $\alpha(n) = n - \ca$ for all but finitely many $n \in \ZZ$ (see Proposition \ref{prop:reconstruction} for a precise general form of this fact).

The \emph{Bruhat order} on $\asp$ is the partial order $\leq$, where 
$$\alpha \leq \beta \quad \mbox{ means that }\quad  \sa(a,b) \leq \sbe(a,b) \mbox{ for all } a,b \in \ZZ.$$
In principle this definition involves infinitely many inequalities, but in practice it often suffices to check only those $(a,b)$ in the \emph{essential set}; this is examined in Section \ref{sec:ess}.

Restricted to $S_d$, this is the usual ``strong'' Bruhat order \cite[Theorem 2.1.5]{bjornerBrenti}, which can also be deduced from Theorem \ref{thm:reduceSeveral}.
We discuss an analog in $\asp$ of the ``weak'' Bruhat orders in Section \ref{sec:reducedWeak}. 
We will also use the following shorthand: $$\alpha \lechi \beta \;\quad \text{means that} \quad \alpha \leq \beta \text{ and } \chi_\alpha = \chi_\beta.$$

\subsection{Main results}
The following existence theorem combines Theorems \ref{thm:starExists1} and \ref{thm:alphaStarSigma}.

\begin{customthm}{A}
\label{thm:starExists}
There is an associative operation $\star$ on $\asp$, characterized by \hfill\leanlink[Submodular]{AspPerm.star_assoc}
\leanEquation{eq:sab}{%
\sab(a,b) = \min_{\ell \in \ZZ} \Big[ \sa(a,\ell) + \sbe(\ell,b) \Big]}{\doclink[Submodular]{AspPerm.star_sf_isleast}}
for all $\alpha, \beta \in \asp$ and $a,b \in \ZZ$.
We call $\star$ the \emph{Demazure product} on $\asp$.
For all $\alpha \in \asp$ and $n \in \ZZ$, $\alpha \star \sigma_n$ satisfies Equation \eqref{eq:demazureBasic}, so $\star$ extends the standard Demazure product on $S_d$.
\hfill\leanlink{Transpositions.star_simple}
\end{customthm}

The $S_d$ case has a geometric interpretation in terms of the relative position of three flags; see Appendix \ref{app:geometric}.

The Demazure product coincides with the ordinary product in an important special case.
We will see (Lemma \ref{lem:reducedStar}) that $\alpha \star \beta \geq \alpha \beta$, and that equality holds if and only if $\alpha$ and $\beta^{-1}$ have no inversions in common. In this situation, we will say later that $\alpha \star \beta = \alpha \beta$ is a \emph{reduced product} and write $\alpha \starr \beta$. I encourage the reader to attempt to prove this before reading on, as it provides useful intuition about Equation \eqref{eq:sab}.

We also prove a second characterization of this operation. It generalizes \cite[Lemma 1]{heSubalgebra}, \cite[Lemma 3.1(e)]{buchMihalcea} from $S_d$ to $\asp$, and is proved in Section \ref{sec:mainTheorems}.

\begin{customthm}{B}
\label{thm:starGreedy}
For all $\alpha, \beta \in \asp$, the following maxima in Bruhat order all exist, and equal $\alpha \star \beta$.
\begin{flalign*}
\eqnum{eq:starGreedy}
&& \alpha \star \beta &= \max \left\{ \alpha_1 \beta_1:\ \alpha_1 \leq \alpha \text{ and } \beta_1 \leq \beta \right\} 
& \leanlink{Reduction.starGreedy}
\\
\eqnum{eq:starGreedyBeta}
&& &= \max \left\{ \alpha \beta_1: \beta_1 \lechi \beta \right\} 
& \leanlink{Reduction.starGreedy_beta}
\\
\eqnum{eq:starGreedyAlpha}
&& &= \max \left\{ \alpha_1 \beta: \alpha_1 \lechi \alpha \right\}.
& \leanlink{Reduction.starGreedy_alpha}
\end{flalign*}
\end{customthm}

One way to understand Equation \eqref{eq:demazureBasic} is that for $\beta \in S_d$, factoring $\beta$ into adjacent transpositions allows you to find the $\beta_1$ in Equation \eqref{eq:starGreedyBeta} by a greedy algorithm.

The last main objective of this paper is a ``reduction theorem'' for inequalities of the form $\alpha \star \beta \geq \gamma$. We will prove that such an inequality implies an equation $\alpha_1 \star \beta_1 = \gamma$. Furthermore, we will want these $\alpha_1, \beta_1$ to satisfy $\alpha_1 \beta_1 = \alpha_1 \star \beta_1$, and we wish to show that if $\alpha, \beta, \gamma$ are from a suitable subgroup of $\asp$, then we may choose $\alpha_1, \beta_1$ from the same subgroup. Several examples of such subgroups are discussed in Section \ref{sec:subgroups}, including symmetric and affine symmetric groups. To state our result, we first require an auxiliary operation that will prove to be a useful accomplice to $\star$. This operation was identified by He in \cite[Lemma 1.4]{heMinimal} for Coxeter groups.

\begin{thm}
\label{thm:resL}
For all $\alpha, \beta \in \asp$, the following Bruhat-minimum exists:
\leanEquation{eq:resLMin}{%
\alpha \resL \beta^{-1} = \min \{ \gamma \in \asp:\ \gamma \star \beta \geq \alpha \}.%
}{\doclink[Submodular]{AspPerm.lres_eq_min}}
The resulting operation $\resL$ is called the \emph{left residual}. It is characterized by the equation
\leanEquation{eq:sabl}{%
s_{\alpha \resL \beta}(a,b) = \max_{\ell \in \ZZ} \Big[ \sa(a,\ell) - \sbei(b,\ell) \Big].
}{\doclink[Submodular]{AspPerm.lres_sf_isgreatest}}
For all $\alpha \in \asp$ and $n \in \ZZ$, $\alpha \resL \sigma_n = \begin{cases}
\alpha \sigma_n & \mbox{ if } \alpha(n) > \alpha(n+1).\\
\alpha & \mbox{ otherwise.}
\end{cases}$%
\hfill\leanlink{Transpositions.residual_simple}
\end{thm}

Most of this theorem is proved in Theorem \ref{thm:resLExists}, except the statement about $\sigma_n$, which is proved in Theorem \ref{thm:alphaStarSigma}. The operation $\resL$ also has a characterization analogous to Theorem \ref{thm:starGreedy}, stated in Theorem \ref{thm:resLStingy}, which is closer to He's definition in \cite{heMinimal}.

There is also a dual operation to $\resL$, which we denote $\resR$ and call the \emph{right residual}. It is characterized by
\begin{equation}
\label{eq:resRMin}
\alpha^{-1} \resR \beta = \min \{ \gamma \in \asp: \alpha \star \gamma \geq \beta \}.
\end{equation}
It suffices to focus on $\resL$, since we will prove that $\alpha \resR \beta = (\beta^{-1} \resL \alpha^{-1})^{-1}$ (Theorem \ref{thm:resLExists}).

We mention the case $\beta = \sigma_n$ in the statement above to support the following intuition: where $\star$ is a greedy multiplication, $\resL$ is a ``stingy multiplication;'' $\beta$ attempts to make $\alpha$ as Bruhat-small as possible using part of itself. This is made precise in Theorem \ref{thm:resLStingy}, in analogy to Theorem \ref{thm:starGreedy}.

Our reduction theorem for inequalities $\alpha \star \beta \geq \gamma$ is the following. It is proved in Section \ref{sec:mainTheorems}, along with a generalization to products of three or more permutations (Theorem \ref{thm:reduceSeveral}).

\begin{customthm}{C}
\label{thm:reduce}
Let $G \leq \asp$ be a subgroup that is closed under $\resL$. For all $\alpha, \beta, \gamma \in G$ such that $\ca + \cb = \chi_\gamma$, $\alpha \star \beta \geq \gamma$ if and only if there exist $\alpha_1, \beta_1 \in G$ such that $\alpha_1 \lechi \alpha,\ \beta_1 \lechi \beta$, and $\alpha_1 \star \beta_1 = \alpha_1 \beta_1 = \gamma$.

More specifically, for all $\alpha, \beta, \gamma \in \asp$, if $\alpha \star \beta \geq \gamma$, then $\alpha_1 = \gamma \resL \beta^{-1}$ and $\beta_1 = \alpha_1^{-1} \resR \gamma$ satisfy $\alpha_1 \star \beta_1 = \alpha_1 \beta_1 = \gamma$, 
$\alpha_1 \lechi \alpha$, and $\beta_1 \lechi \beta$.
\hfill\leanlink{Reduction.reduce_witness}
\end{customthm}

In addition to Theorems \ref{thm:starExists}, \ref{thm:starGreedy}, and \ref{thm:reduce}, we also examine some other properties of Bruhat order and $\star$ on certain subgroups of $\asp$ that will be useful in our applications; see Sections \ref{sec:ess} and \ref{sec:subgroups}.

\subsection{Background on Demazure products}
\label{ssec:background}

We use the name \emph{Demazure product} in reference to \cite{demazure}, in which a collection of operators $L_\alpha$ are defined such that, in our notation, $L_\alpha L_\beta = L_{\alpha \star \beta}$ (see \S 5.6 of that paper). The same operation occurred around the same time in \cite{bggSchubert}. This point of view, realizing the Demazure product in terms of composition of operators, occurs in many other contexts, e.g. \cite[Definition 3.1]{knutsonMiller}, which considers operators $\overline{\partial}_i$, $1 \leq i < n$, on a polynomial ring $R[x_1, \cdots, x_n]$ defined by 
$$\overline{\partial_i}(f) =  \frac{x_{i+1} f - x_i (s_i \cdot f)}{x_{i+1}-x_i},$$
and considers the \emph{Demazure algebra} generated by these operators. These difference operators are related, but not identical to, the difference operators used to define Schubert polynomials, e.g. as in \cite[p. 165]{fultonTableaux}. I do not know if there is a useful way to relate the min-plus formulation of $\star$ to this operator formulation, or whether the Demazure product on $\asp$ can be understood in a similar way.

The other principal point of view on $\star$ is from the Hecke algebra, in which it arises (up to sign) by setting the parameter $q$ to $0$, hence called the $0$-Hecke product. See \cite[\S 7]{humphreysReflection} or \cite[\S 6]{bjornerBrenti}.

Applications of the Demazure product to algebraic geometry typically relate to the geometry of Schubert varieties. Another interesting example of such applications is  \cite{buchMihalcea}, which shows that a \emph{curve neighborhood} of a Schubert variety results in another Schubert variety, indexed by the permutation given by the Demazure product.

\subsection{Outline of the paper}

We begin with some preliminary facts about $\asp$ and the functions $\sa$ in Section \ref{sec:aspPrelim}. Section \ref{sec:slipface} introduces the set $\slip$ of \emph{slipfaces}, formulates the Demazure product and residuals at this level of generality, and proves a few basic facts. Section \ref{sec:submodular} identifies the functions $\sa$ as those slipfaces that are \emph{submodular}, thereby constructs $\star$ and $\resL$ on $\asp$ and proves Theorem \ref{thm:starExists}, and analyzes the locations of inversions after these operations. Sections \ref{sec:reducedWeak} and \ref{sec:mainTheorems} leverage the earlier results to relate $\star$ and $\resL$ to reduced products and weak Bruhat orders, to prove Theorems \ref{thm:starGreedy} and \ref{thm:reduce}. 
Section \ref{sec:ess} generalizes a concept called the \emph{essential set} to $\asp$, in order to study Bruhat order.
Section \ref{sec:subgroups} considers a number of subgroups of $\asp$ of special interest, and identifies some aspects of $\star$ and $\resL$ specific to those subgroups. Finally, two appendices follow to provide context. Appendix \ref{app:geometric} provides a concrete geometric interpretation of the ``min-plus matrix multiplication'' formula in the $S_d$ case, in terms of relative positions of flags. Appendix \ref{app:curvesGraphs} briefly summarizes applications to divisor theory on algebraic curves and (metric) graphs.

\section{Preliminaries on almost-sign-preserving permutations}
\label{sec:aspPrelim}

This section collects a few preliminary facts about almost-sign-preserving permutations and their slipface functions, and also attempts to explain my somewhat whimsical choice of the word ``slipface.'' First observe the following characterizations of finite differences of $\sa$. Note that Equation \eqref{eq:Deltasa} shows that the slipface function $\sa$ uniquely determines the permutation $\alpha$.
\begin{flalign*}
\eqnum{eq:b+1}&&
\sa(a,b) - \sa(a,b+1) &= \delta\left( \alpha(b) < a \right)
& \leanlink{AspPerm.b_step}
\\ 
\eqnum{eq:a+1}&&
\sa(a+1,b) - \sa(a,b) &= \delta\left( \alpha^{-1}(a) \geq b \right)
& \leanlink{AspPerm.a_step}
\\
\eqnum{eq:Deltasa}&&
\sa(a+1,b) - \sa(a,b) - \sa(a+1,b+1) + \sa(a,b+1) &= \delta(\alpha(b) = a)
& \leanlink{AspPerm.Delta_eq}
\end{flalign*}

I will now attempt to convince you that the word ``slipface'' does indeed evoke the basic properties of these functions $\sa$.
The function $\sa$ may be usefully visualized as a sequence of functions $f(n) = \sa(n,b)$ for various choices of $b$. For $b$ fixed, this function is a nondecreasing function, beginning at $0$, and eventually stabilizing to have slope $1$. When $b$ increases, Equation \eqref{eq:b+1} shows that part of the function drops, namely all points with $n > \alpha(b)$, so the slope-$1$ line that the graph approaches shifts one unit to the \emph{right}. Some examples are shown in Figure \ref{fig:slipfaces}. In these three examples, we consider three permutations in $S_9$. To me at least, this sequence of graphs evokes a dune of sand which, over time, drops in height as sand slips and falls down the face, and thereby appears to move to the right.

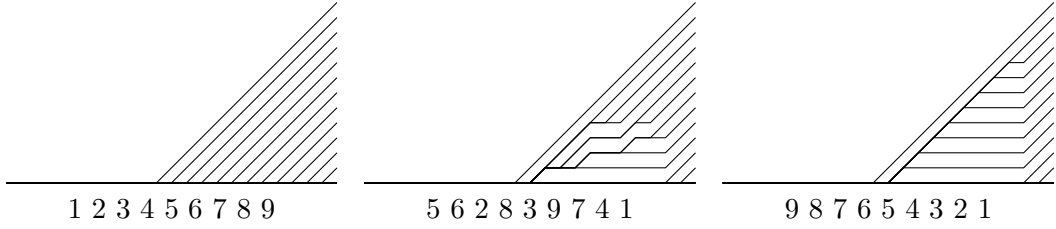
\begin{figure}
\begin{tabular}{ccc}
\begin{tikzpicture}[scale=0.2]
\draw (-10,0) -- (-9,0) -- (-8,0) -- (-7,0) -- (-6,0) -- (-5,0) -- (-4,0) -- (-3,0) -- (-2,0) -- (-1,0) -- (0,0) -- (1,1) -- (2,2) -- (3,3) -- (4,4) -- (5,5) -- (6,6) -- (7,7) -- (8,8) -- (9,9) -- (10,10) -- (11,11) -- (12,12);
\draw (-10,0) -- (-9,0) -- (-8,0) -- (-7,0) -- (-6,0) -- (-5,0) -- (-4,0) -- (-3,0) -- (-2,0) -- (-1,0) -- (0,0) -- (1,0) -- (2,1) -- (3,2) -- (4,3) -- (5,4) -- (6,5) -- (7,6) -- (8,7) -- (9,8) -- (10,9) -- (11,10) -- (12,11);
\draw (-10,0) -- (-9,0) -- (-8,0) -- (-7,0) -- (-6,0) -- (-5,0) -- (-4,0) -- (-3,0) -- (-2,0) -- (-1,0) -- (0,0) -- (1,0) -- (2,0) -- (3,1) -- (4,2) -- (5,3) -- (6,4) -- (7,5) -- (8,6) -- (9,7) -- (10,8) -- (11,9) -- (12,10);
\draw (-10,0) -- (-9,0) -- (-8,0) -- (-7,0) -- (-6,0) -- (-5,0) -- (-4,0) -- (-3,0) -- (-2,0) -- (-1,0) -- (0,0) -- (1,0) -- (2,0) -- (3,0) -- (4,1) -- (5,2) -- (6,3) -- (7,4) -- (8,5) -- (9,6) -- (10,7) -- (11,8) -- (12,9);
\draw (-10,0) -- (-9,0) -- (-8,0) -- (-7,0) -- (-6,0) -- (-5,0) -- (-4,0) -- (-3,0) -- (-2,0) -- (-1,0) -- (0,0) -- (1,0) -- (2,0) -- (3,0) -- (4,0) -- (5,1) -- (6,2) -- (7,3) -- (8,4) -- (9,5) -- (10,6) -- (11,7) -- (12,8);
\draw (-10,0) -- (-9,0) -- (-8,0) -- (-7,0) -- (-6,0) -- (-5,0) -- (-4,0) -- (-3,0) -- (-2,0) -- (-1,0) -- (0,0) -- (1,0) -- (2,0) -- (3,0) -- (4,0) -- (5,0) -- (6,1) -- (7,2) -- (8,3) -- (9,4) -- (10,5) -- (11,6) -- (12,7);
\draw (-10,0) -- (-9,0) -- (-8,0) -- (-7,0) -- (-6,0) -- (-5,0) -- (-4,0) -- (-3,0) -- (-2,0) -- (-1,0) -- (0,0) -- (1,0) -- (2,0) -- (3,0) -- (4,0) -- (5,0) -- (6,0) -- (7,1) -- (8,2) -- (9,3) -- (10,4) -- (11,5) -- (12,6);
\draw (-10,0) -- (-9,0) -- (-8,0) -- (-7,0) -- (-6,0) -- (-5,0) -- (-4,0) -- (-3,0) -- (-2,0) -- (-1,0) -- (0,0) -- (1,0) -- (2,0) -- (3,0) -- (4,0) -- (5,0) -- (6,0) -- (7,0) -- (8,1) -- (9,2) -- (10,3) -- (11,4) -- (12,5);
\draw (-10,0) -- (-9,0) -- (-8,0) -- (-7,0) -- (-6,0) -- (-5,0) -- (-4,0) -- (-3,0) -- (-2,0) -- (-1,0) -- (0,0) -- (1,0) -- (2,0) -- (3,0) -- (4,0) -- (5,0) -- (6,0) -- (7,0) -- (8,0) -- (9,1) -- (10,2) -- (11,3) -- (12,4);
\draw (-10,0) -- (-9,0) -- (-8,0) -- (-7,0) -- (-6,0) -- (-5,0) -- (-4,0) -- (-3,0) -- (-2,0) -- (-1,0) -- (0,0) -- (1,0) -- (2,0) -- (3,0) -- (4,0) -- (5,0) -- (6,0) -- (7,0) -- (8,0) -- (9,0) -- (10,1) -- (11,2) -- (12,3);
\draw (-10,0) -- (-9,0) -- (-8,0) -- (-7,0) -- (-6,0) -- (-5,0) -- (-4,0) -- (-3,0) -- (-2,0) -- (-1,0) -- (0,0) -- (1,0) -- (2,0) -- (3,0) -- (4,0) -- (5,0) -- (6,0) -- (7,0) -- (8,0) -- (9,0) -- (10,0) -- (11,1) -- (12,2);
\draw (-10,0) -- (-9,0) -- (-8,0) -- (-7,0) -- (-6,0) -- (-5,0) -- (-4,0) -- (-3,0) -- (-2,0) -- (-1,0) -- (0,0) -- (1,0) -- (2,0) -- (3,0) -- (4,0) -- (5,0) -- (6,0) -- (7,0) -- (8,0) -- (9,0) -- (10,0) -- (11,0) -- (12,1);
\draw (-10,0) -- (-9,0) -- (-8,0) -- (-7,0) -- (-6,0) -- (-5,0) -- (-4,0) -- (-3,0) -- (-2,0) -- (-1,0) -- (0,0) -- (1,0) -- (2,0) -- (3,0) -- (4,0) -- (5,0) -- (6,0) -- (7,0) -- (8,0) -- (9,0) -- (10,0) -- (11,0) -- (12,0);
\draw (-10,0) -- (-9,0) -- (-8,0) -- (-7,0) -- (-6,0) -- (-5,0) -- (-4,0) -- (-3,0) -- (-2,0) -- (-1,0) -- (0,0) -- (1,0) -- (2,0) -- (3,0) -- (4,0) -- (5,0) -- (6,0) -- (7,0) -- (8,0) -- (9,0) -- (10,0) -- (11,0) -- (12,0);
\draw (-10,0) -- (-9,0) -- (-8,0) -- (-7,0) -- (-6,0) -- (-5,0) -- (-4,0) -- (-3,0) -- (-2,0) -- (-1,0) -- (0,0) -- (1,0) -- (2,0) -- (3,0) -- (4,0) -- (5,0) -- (6,0) -- (7,0) -- (8,0) -- (9,0) -- (10,0) -- (11,0) -- (12,0);
\draw (-10,0) -- (-9,0) -- (-8,0) -- (-7,0) -- (-6,0) -- (-5,0) -- (-4,0) -- (-3,0) -- (-2,0) -- (-1,0) -- (0,0) -- (1,0) -- (2,0) -- (3,0) -- (4,0) -- (5,0) -- (6,0) -- (7,0) -- (8,0) -- (9,0) -- (10,0) -- (11,0) -- (12,0);
\draw (-10,0) -- (-9,0) -- (-8,0) -- (-7,0) -- (-6,0) -- (-5,0) -- (-4,0) -- (-3,0) -- (-2,0) -- (-1,0) -- (0,0) -- (1,0) -- (2,0) -- (3,0) -- (4,0) -- (5,0) -- (6,0) -- (7,0) -- (8,0) -- (9,0) -- (10,0) -- (11,0) -- (12,0);
\draw (-10,0) -- (-9,0) -- (-8,0) -- (-7,0) -- (-6,0) -- (-5,0) -- (-4,0) -- (-3,0) -- (-2,0) -- (-1,0) -- (0,0) -- (1,0) -- (2,0) -- (3,0) -- (4,0) -- (5,0) -- (6,0) -- (7,0) -- (8,0) -- (9,0) -- (10,0) -- (11,0) -- (12,0);
\draw (-10,0) -- (-9,0) -- (-8,0) -- (-7,0) -- (-6,0) -- (-5,0) -- (-4,0) -- (-3,0) -- (-2,0) -- (-1,0) -- (0,0) -- (1,0) -- (2,0) -- (3,0) -- (4,0) -- (5,0) -- (6,0) -- (7,0) -- (8,0) -- (9,0) -- (10,0) -- (11,0) -- (12,0);
\draw (-10,0) -- (-9,0) -- (-8,0) -- (-7,0) -- (-6,0) -- (-5,0) -- (-4,0) -- (-3,0) -- (-2,0) -- (-1,0) -- (0,0) -- (1,0) -- (2,0) -- (3,0) -- (4,0) -- (5,0) -- (6,0) -- (7,0) -- (8,0) -- (9,0) -- (10,0) -- (11,0) -- (12,0);
\draw (-10,0) -- (-9,0) -- (-8,0) -- (-7,0) -- (-6,0) -- (-5,0) -- (-4,0) -- (-3,0) -- (-2,0) -- (-1,0) -- (0,0) -- (1,0) -- (2,0) -- (3,0) -- (4,0) -- (5,0) -- (6,0) -- (7,0) -- (8,0) -- (9,0) -- (10,0) -- (11,0) -- (12,0);
\end{tikzpicture}
&
\begin{tikzpicture}[scale=0.2]
\draw (-10,0) -- (-9,0) -- (-8,0) -- (-7,0) -- (-6,0) -- (-5,0) -- (-4,0) -- (-3,0) -- (-2,0) -- (-1,0) -- (0,0) -- (1,1) -- (2,2) -- (3,3) -- (4,4) -- (5,5) -- (6,6) -- (7,7) -- (8,8) -- (9,9) -- (10,10) -- (11,11) -- (12,12);
\draw (-10,0) -- (-9,0) -- (-8,0) -- (-7,0) -- (-6,0) -- (-5,0) -- (-4,0) -- (-3,0) -- (-2,0) -- (-1,0) -- (0,0) -- (1,0) -- (2,1) -- (3,2) -- (4,3) -- (5,4) -- (6,5) -- (7,6) -- (8,7) -- (9,8) -- (10,9) -- (11,10) -- (12,11);
\draw (-10,0) -- (-9,0) -- (-8,0) -- (-7,0) -- (-6,0) -- (-5,0) -- (-4,0) -- (-3,0) -- (-2,0) -- (-1,0) -- (0,0) -- (1,0) -- (2,1) -- (3,2) -- (4,3) -- (5,4) -- (6,4) -- (7,5) -- (8,6) -- (9,7) -- (10,8) -- (11,9) -- (12,10);
\draw (-10,0) -- (-9,0) -- (-8,0) -- (-7,0) -- (-6,0) -- (-5,0) -- (-4,0) -- (-3,0) -- (-2,0) -- (-1,0) -- (0,0) -- (1,0) -- (2,1) -- (3,2) -- (4,3) -- (5,4) -- (6,4) -- (7,4) -- (8,5) -- (9,6) -- (10,7) -- (11,8) -- (12,9);
\draw (-10,0) -- (-9,0) -- (-8,0) -- (-7,0) -- (-6,0) -- (-5,0) -- (-4,0) -- (-3,0) -- (-2,0) -- (-1,0) -- (0,0) -- (1,0) -- (2,1) -- (3,1) -- (4,2) -- (5,3) -- (6,3) -- (7,3) -- (8,4) -- (9,5) -- (10,6) -- (11,7) -- (12,8);
\draw (-10,0) -- (-9,0) -- (-8,0) -- (-7,0) -- (-6,0) -- (-5,0) -- (-4,0) -- (-3,0) -- (-2,0) -- (-1,0) -- (0,0) -- (1,0) -- (2,1) -- (3,1) -- (4,2) -- (5,3) -- (6,3) -- (7,3) -- (8,4) -- (9,4) -- (10,5) -- (11,6) -- (12,7);
\draw (-10,0) -- (-9,0) -- (-8,0) -- (-7,0) -- (-6,0) -- (-5,0) -- (-4,0) -- (-3,0) -- (-2,0) -- (-1,0) -- (0,0) -- (1,0) -- (2,1) -- (3,1) -- (4,1) -- (5,2) -- (6,2) -- (7,2) -- (8,3) -- (9,3) -- (10,4) -- (11,5) -- (12,6);
\draw (-10,0) -- (-9,0) -- (-8,0) -- (-7,0) -- (-6,0) -- (-5,0) -- (-4,0) -- (-3,0) -- (-2,0) -- (-1,0) -- (0,0) -- (1,0) -- (2,1) -- (3,1) -- (4,1) -- (5,2) -- (6,2) -- (7,2) -- (8,3) -- (9,3) -- (10,3) -- (11,4) -- (12,5);
\draw (-10,0) -- (-9,0) -- (-8,0) -- (-7,0) -- (-6,0) -- (-5,0) -- (-4,0) -- (-3,0) -- (-2,0) -- (-1,0) -- (0,0) -- (1,0) -- (2,1) -- (3,1) -- (4,1) -- (5,2) -- (6,2) -- (7,2) -- (8,2) -- (9,2) -- (10,2) -- (11,3) -- (12,4);
\draw (-10,0) -- (-9,0) -- (-8,0) -- (-7,0) -- (-6,0) -- (-5,0) -- (-4,0) -- (-3,0) -- (-2,0) -- (-1,0) -- (0,0) -- (1,0) -- (2,1) -- (3,1) -- (4,1) -- (5,1) -- (6,1) -- (7,1) -- (8,1) -- (9,1) -- (10,1) -- (11,2) -- (12,3);
\draw (-10,0) -- (-9,0) -- (-8,0) -- (-7,0) -- (-6,0) -- (-5,0) -- (-4,0) -- (-3,0) -- (-2,0) -- (-1,0) -- (0,0) -- (1,0) -- (2,0) -- (3,0) -- (4,0) -- (5,0) -- (6,0) -- (7,0) -- (8,0) -- (9,0) -- (10,0) -- (11,1) -- (12,2);
\draw (-10,0) -- (-9,0) -- (-8,0) -- (-7,0) -- (-6,0) -- (-5,0) -- (-4,0) -- (-3,0) -- (-2,0) -- (-1,0) -- (0,0) -- (1,0) -- (2,0) -- (3,0) -- (4,0) -- (5,0) -- (6,0) -- (7,0) -- (8,0) -- (9,0) -- (10,0) -- (11,0) -- (12,1);
\draw (-10,0) -- (-9,0) -- (-8,0) -- (-7,0) -- (-6,0) -- (-5,0) -- (-4,0) -- (-3,0) -- (-2,0) -- (-1,0) -- (0,0) -- (1,0) -- (2,0) -- (3,0) -- (4,0) -- (5,0) -- (6,0) -- (7,0) -- (8,0) -- (9,0) -- (10,0) -- (11,0) -- (12,0);
\draw (-10,0) -- (-9,0) -- (-8,0) -- (-7,0) -- (-6,0) -- (-5,0) -- (-4,0) -- (-3,0) -- (-2,0) -- (-1,0) -- (0,0) -- (1,0) -- (2,0) -- (3,0) -- (4,0) -- (5,0) -- (6,0) -- (7,0) -- (8,0) -- (9,0) -- (10,0) -- (11,0) -- (12,0);
\draw (-10,0) -- (-9,0) -- (-8,0) -- (-7,0) -- (-6,0) -- (-5,0) -- (-4,0) -- (-3,0) -- (-2,0) -- (-1,0) -- (0,0) -- (1,0) -- (2,0) -- (3,0) -- (4,0) -- (5,0) -- (6,0) -- (7,0) -- (8,0) -- (9,0) -- (10,0) -- (11,0) -- (12,0);
\draw (-10,0) -- (-9,0) -- (-8,0) -- (-7,0) -- (-6,0) -- (-5,0) -- (-4,0) -- (-3,0) -- (-2,0) -- (-1,0) -- (0,0) -- (1,0) -- (2,0) -- (3,0) -- (4,0) -- (5,0) -- (6,0) -- (7,0) -- (8,0) -- (9,0) -- (10,0) -- (11,0) -- (12,0);
\draw (-10,0) -- (-9,0) -- (-8,0) -- (-7,0) -- (-6,0) -- (-5,0) -- (-4,0) -- (-3,0) -- (-2,0) -- (-1,0) -- (0,0) -- (1,0) -- (2,0) -- (3,0) -- (4,0) -- (5,0) -- (6,0) -- (7,0) -- (8,0) -- (9,0) -- (10,0) -- (11,0) -- (12,0);
\draw (-10,0) -- (-9,0) -- (-8,0) -- (-7,0) -- (-6,0) -- (-5,0) -- (-4,0) -- (-3,0) -- (-2,0) -- (-1,0) -- (0,0) -- (1,0) -- (2,0) -- (3,0) -- (4,0) -- (5,0) -- (6,0) -- (7,0) -- (8,0) -- (9,0) -- (10,0) -- (11,0) -- (12,0);
\draw (-10,0) -- (-9,0) -- (-8,0) -- (-7,0) -- (-6,0) -- (-5,0) -- (-4,0) -- (-3,0) -- (-2,0) -- (-1,0) -- (0,0) -- (1,0) -- (2,0) -- (3,0) -- (4,0) -- (5,0) -- (6,0) -- (7,0) -- (8,0) -- (9,0) -- (10,0) -- (11,0) -- (12,0);
\draw (-10,0) -- (-9,0) -- (-8,0) -- (-7,0) -- (-6,0) -- (-5,0) -- (-4,0) -- (-3,0) -- (-2,0) -- (-1,0) -- (0,0) -- (1,0) -- (2,0) -- (3,0) -- (4,0) -- (5,0) -- (6,0) -- (7,0) -- (8,0) -- (9,0) -- (10,0) -- (11,0) -- (12,0);
\draw (-10,0) -- (-9,0) -- (-8,0) -- (-7,0) -- (-6,0) -- (-5,0) -- (-4,0) -- (-3,0) -- (-2,0) -- (-1,0) -- (0,0) -- (1,0) -- (2,0) -- (3,0) -- (4,0) -- (5,0) -- (6,0) -- (7,0) -- (8,0) -- (9,0) -- (10,0) -- (11,0) -- (12,0);
\end{tikzpicture}
&
\begin{tikzpicture}[scale=0.2]
\draw (-10,0) -- (-9,0) -- (-8,0) -- (-7,0) -- (-6,0) -- (-5,0) -- (-4,0) -- (-3,0) -- (-2,0) -- (-1,0) -- (0,0) -- (1,1) -- (2,2) -- (3,3) -- (4,4) -- (5,5) -- (6,6) -- (7,7) -- (8,8) -- (9,9) -- (10,10) -- (11,11) -- (12,12);
\draw (-10,0) -- (-9,0) -- (-8,0) -- (-7,0) -- (-6,0) -- (-5,0) -- (-4,0) -- (-3,0) -- (-2,0) -- (-1,0) -- (0,0) -- (1,0) -- (2,1) -- (3,2) -- (4,3) -- (5,4) -- (6,5) -- (7,6) -- (8,7) -- (9,8) -- (10,9) -- (11,10) -- (12,11);
\draw (-10,0) -- (-9,0) -- (-8,0) -- (-7,0) -- (-6,0) -- (-5,0) -- (-4,0) -- (-3,0) -- (-2,0) -- (-1,0) -- (0,0) -- (1,0) -- (2,1) -- (3,2) -- (4,3) -- (5,4) -- (6,5) -- (7,6) -- (8,7) -- (9,8) -- (10,8) -- (11,9) -- (12,10);
\draw (-10,0) -- (-9,0) -- (-8,0) -- (-7,0) -- (-6,0) -- (-5,0) -- (-4,0) -- (-3,0) -- (-2,0) -- (-1,0) -- (0,0) -- (1,0) -- (2,1) -- (3,2) -- (4,3) -- (5,4) -- (6,5) -- (7,6) -- (8,7) -- (9,7) -- (10,7) -- (11,8) -- (12,9);
\draw (-10,0) -- (-9,0) -- (-8,0) -- (-7,0) -- (-6,0) -- (-5,0) -- (-4,0) -- (-3,0) -- (-2,0) -- (-1,0) -- (0,0) -- (1,0) -- (2,1) -- (3,2) -- (4,3) -- (5,4) -- (6,5) -- (7,6) -- (8,6) -- (9,6) -- (10,6) -- (11,7) -- (12,8);
\draw (-10,0) -- (-9,0) -- (-8,0) -- (-7,0) -- (-6,0) -- (-5,0) -- (-4,0) -- (-3,0) -- (-2,0) -- (-1,0) -- (0,0) -- (1,0) -- (2,1) -- (3,2) -- (4,3) -- (5,4) -- (6,5) -- (7,5) -- (8,5) -- (9,5) -- (10,5) -- (11,6) -- (12,7);
\draw (-10,0) -- (-9,0) -- (-8,0) -- (-7,0) -- (-6,0) -- (-5,0) -- (-4,0) -- (-3,0) -- (-2,0) -- (-1,0) -- (0,0) -- (1,0) -- (2,1) -- (3,2) -- (4,3) -- (5,4) -- (6,4) -- (7,4) -- (8,4) -- (9,4) -- (10,4) -- (11,5) -- (12,6);
\draw (-10,0) -- (-9,0) -- (-8,0) -- (-7,0) -- (-6,0) -- (-5,0) -- (-4,0) -- (-3,0) -- (-2,0) -- (-1,0) -- (0,0) -- (1,0) -- (2,1) -- (3,2) -- (4,3) -- (5,3) -- (6,3) -- (7,3) -- (8,3) -- (9,3) -- (10,3) -- (11,4) -- (12,5);
\draw (-10,0) -- (-9,0) -- (-8,0) -- (-7,0) -- (-6,0) -- (-5,0) -- (-4,0) -- (-3,0) -- (-2,0) -- (-1,0) -- (0,0) -- (1,0) -- (2,1) -- (3,2) -- (4,2) -- (5,2) -- (6,2) -- (7,2) -- (8,2) -- (9,2) -- (10,2) -- (11,3) -- (12,4);
\draw (-10,0) -- (-9,0) -- (-8,0) -- (-7,0) -- (-6,0) -- (-5,0) -- (-4,0) -- (-3,0) -- (-2,0) -- (-1,0) -- (0,0) -- (1,0) -- (2,1) -- (3,1) -- (4,1) -- (5,1) -- (6,1) -- (7,1) -- (8,1) -- (9,1) -- (10,1) -- (11,2) -- (12,3);
\draw (-10,0) -- (-9,0) -- (-8,0) -- (-7,0) -- (-6,0) -- (-5,0) -- (-4,0) -- (-3,0) -- (-2,0) -- (-1,0) -- (0,0) -- (1,0) -- (2,0) -- (3,0) -- (4,0) -- (5,0) -- (6,0) -- (7,0) -- (8,0) -- (9,0) -- (10,0) -- (11,1) -- (12,2);
\draw (-10,0) -- (-9,0) -- (-8,0) -- (-7,0) -- (-6,0) -- (-5,0) -- (-4,0) -- (-3,0) -- (-2,0) -- (-1,0) -- (0,0) -- (1,0) -- (2,0) -- (3,0) -- (4,0) -- (5,0) -- (6,0) -- (7,0) -- (8,0) -- (9,0) -- (10,0) -- (11,0) -- (12,1);
\draw (-10,0) -- (-9,0) -- (-8,0) -- (-7,0) -- (-6,0) -- (-5,0) -- (-4,0) -- (-3,0) -- (-2,0) -- (-1,0) -- (0,0) -- (1,0) -- (2,0) -- (3,0) -- (4,0) -- (5,0) -- (6,0) -- (7,0) -- (8,0) -- (9,0) -- (10,0) -- (11,0) -- (12,0);
\draw (-10,0) -- (-9,0) -- (-8,0) -- (-7,0) -- (-6,0) -- (-5,0) -- (-4,0) -- (-3,0) -- (-2,0) -- (-1,0) -- (0,0) -- (1,0) -- (2,0) -- (3,0) -- (4,0) -- (5,0) -- (6,0) -- (7,0) -- (8,0) -- (9,0) -- (10,0) -- (11,0) -- (12,0);
\draw (-10,0) -- (-9,0) -- (-8,0) -- (-7,0) -- (-6,0) -- (-5,0) -- (-4,0) -- (-3,0) -- (-2,0) -- (-1,0) -- (0,0) -- (1,0) -- (2,0) -- (3,0) -- (4,0) -- (5,0) -- (6,0) -- (7,0) -- (8,0) -- (9,0) -- (10,0) -- (11,0) -- (12,0);
\draw (-10,0) -- (-9,0) -- (-8,0) -- (-7,0) -- (-6,0) -- (-5,0) -- (-4,0) -- (-3,0) -- (-2,0) -- (-1,0) -- (0,0) -- (1,0) -- (2,0) -- (3,0) -- (4,0) -- (5,0) -- (6,0) -- (7,0) -- (8,0) -- (9,0) -- (10,0) -- (11,0) -- (12,0);
\draw (-10,0) -- (-9,0) -- (-8,0) -- (-7,0) -- (-6,0) -- (-5,0) -- (-4,0) -- (-3,0) -- (-2,0) -- (-1,0) -- (0,0) -- (1,0) -- (2,0) -- (3,0) -- (4,0) -- (5,0) -- (6,0) -- (7,0) -- (8,0) -- (9,0) -- (10,0) -- (11,0) -- (12,0);
\draw (-10,0) -- (-9,0) -- (-8,0) -- (-7,0) -- (-6,0) -- (-5,0) -- (-4,0) -- (-3,0) -- (-2,0) -- (-1,0) -- (0,0) -- (1,0) -- (2,0) -- (3,0) -- (4,0) -- (5,0) -- (6,0) -- (7,0) -- (8,0) -- (9,0) -- (10,0) -- (11,0) -- (12,0);
\draw (-10,0) -- (-9,0) -- (-8,0) -- (-7,0) -- (-6,0) -- (-5,0) -- (-4,0) -- (-3,0) -- (-2,0) -- (-1,0) -- (0,0) -- (1,0) -- (2,0) -- (3,0) -- (4,0) -- (5,0) -- (6,0) -- (7,0) -- (8,0) -- (9,0) -- (10,0) -- (11,0) -- (12,0);
\draw (-10,0) -- (-9,0) -- (-8,0) -- (-7,0) -- (-6,0) -- (-5,0) -- (-4,0) -- (-3,0) -- (-2,0) -- (-1,0) -- (0,0) -- (1,0) -- (2,0) -- (3,0) -- (4,0) -- (5,0) -- (6,0) -- (7,0) -- (8,0) -- (9,0) -- (10,0) -- (11,0) -- (12,0);
\draw (-10,0) -- (-9,0) -- (-8,0) -- (-7,0) -- (-6,0) -- (-5,0) -- (-4,0) -- (-3,0) -- (-2,0) -- (-1,0) -- (0,0) -- (1,0) -- (2,0) -- (3,0) -- (4,0) -- (5,0) -- (6,0) -- (7,0) -- (8,0) -- (9,0) -- (10,0) -- (11,0) -- (12,0);
\end{tikzpicture}
\\
1 2 3 4 5 6 7 8 9
&
5 6 2 8 3 9 7 4 1
&
9 8 7 6 5 4 3 2 1
\end{tabular}
\caption{Superimposed plots of the graphs $y = s_\alpha(x,b)$ for $0 \leq b \leq 20$, and three different permutations $\alpha$ of $\{1,\cdots,9\}$. Each permutation is written in one-line notation $\alpha(1)\ \alpha(2)\ \cdots\ \alpha(9)$.
The graph in Figure \ref{fig:sa} may be useful in studying the middle example.
}
\label{fig:slipfaces}
\end{figure}

\subsection{Some properties of the shift $\ca$}
Equations \eqref{eq:b+1} and \eqref{eq:a+1} show that the function $f(a,b) = \sa(a,b) - \sai(b,a) -a + b$ is constant.  Since $\ca = f(0,0)$ by definition, we have the following duality between $\sa$ and $\sai$, reminiscent (not coincidentally!) of the Riemann-Roch formula. 
\leanTaggedEquation{eq:saDuality}
  {\sa(a,b) - \sai(b,a) = \ca + a - b.}
  {\doclink{AspPerm.duality}}
  {\ensuremath{\dagger}}
Equation \eqref{eq:saDuality} shows that $\sa(a,b) \geq \max\{0, \ca + a - b\}$, with equality if and only if either $\sa(a,b) = 0$ or $\sai(b, a) = 0$. For $a$ fixed and $b \ll 0$, or $b$ fixed and $a \gg 0$, $\sai(b,a) = 0$, so $\sa(a,b) = \ca + a - b$. In this sense, the shift $\ca$ governs the asymptotic behavior of $\sa$. We mention a few other immediate consequences of Equation \eqref{eq:saDuality}.

\begin{lemma}
\label{lem:bruhatInverse}
For all $\alpha, \beta \in \asp$, $\alpha \lechi \beta$ if and only if $\alpha^{-1} \lechi \beta^{-1}$.
\hfill\leanlink[Submodular]{AspPerm.le_chi_inv_iff}
\end{lemma}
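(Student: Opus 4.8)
The plan is to reduce everything to the duality formula \eqref{eq:saDuality}, $\sa(a,b) - \sai(b,a) = \ca + a - b$, which already packages the relationship between a permutation and its inverse at the level of slipface functions. The statement is symmetric under $\alpha \leftrightarrow \alpha^{-1}$, $\beta \leftrightarrow \beta^{-1}$, so it suffices to prove one implication.

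First I would record the behavior of the shift under inversion: applying \eqref{eq:saDuality} with the roles swapped, or simply reading off the definition, gives $\chi_{\alpha^{-1}} = s_{\alpha^{-1}}(0,0) - \sa(0,0) = -\ca$. Hence $\ca = \cb$ holds if and only if $\chi_{\alpha^{-1}} = \chi_{\beta^{-1}}$, so the ``$\chi$'' half of $\lechi$ transfers to inverses for free, and we may assume throughout that $\ca = \cb =: c$.

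Next, under this hypothesis, I would apply \eqref{eq:saDuality} to both $\alpha$ and $\beta$ and subtract: for every $a, b \in \ZZ$,
\[
\sa(a,b) - \sbe(a,b) = \big(\sai(b,a) + c + a - b\big) - \big(\sbei(b,a) + c + a - b\big) = \sai(b,a) - \sbei(b,a).
\]
Therefore $\sa(a,b) \leq \sbe(a,b)$ for all $a,b \in \ZZ$ if and only if $\sai(b,a) \leq \sbei(b,a)$ for all $a,b \in \ZZ$; since $(a,b) \mapsto (b,a)$ is a bijection of $\ZZ^2$, the latter is exactly the condition $\alpha^{-1} \leq \beta^{-1}$. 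Combined with the shift observation, this yields $\alpha \lechi \beta \iff \alpha^{-1} \lechi \beta^{-1}$.

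I do not expect any real obstacle here: the content is entirely in formula \eqref{eq:saDuality}, and the only points requiring a moment's care are the sign in $\chi_{\alpha^{-1}} = -\ca$ and the observation that the reindexing $(a,b)\mapsto(b,a)$ ranges over all of $\ZZ^2$ so that no inequalities are lost.
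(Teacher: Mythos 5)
Your proposal is correct and is essentially identical to the paper's one-line proof: both subtract the duality formula \eqref{eq:saDuality} for $\alpha$ and $\beta$ under the hypothesis $\ca = \cb$ to obtain $\sa(a,b) - \sbe(a,b) = \sai(b,a) - \sbei(b,a)$. Your added remarks about $\chi_{\alpha^{-1}} = -\ca$ and the reindexing $(a,b)\mapsto(b,a)$ are just making explicit what the paper leaves implicit.
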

\begin{proof}
If $\ca = \cb$, then Equation \eqref{eq:saDuality} implies $\sa(a,b) - \sbe(a,b) = \sai(b,a) - \sbei(b,a)$.
\end{proof}

\begin{defn}
\label{defn:shiftPerm}
For any $\chi \in \ZZ$, denote by $\iota_\chi$ the increasing permutation $\iota_\chi(n) = n-\chi$, of shift $\chi$.
\end{defn}

\begin{lemma}
For any $\chi \in \ZZ$ and $\alpha \in \asp$, if $\ca \geq \chi$ then $\alpha \geq \iota_\chi$. In particular,
$$\iota_\chi = \min \{ \alpha \in \asp:\ \ca = \chi \}.$$
\end{lemma}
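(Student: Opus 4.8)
The plan is to compute the slipface function of $\iota_\chi$ explicitly, and then compare it to $\sa$ using the Riemann--Roch-type identity \eqref{eq:saDuality}. First observe that $\iota_\chi \in \asp$: the only integers $n$ for which exactly one of $n$ and $\iota_\chi(n) = n - \chi$ is negative are those lying (strictly) between $0$ and $\chi$, of which there are finitely many. Next, directly from \eqref{eq:sa},
\begin{equation*}
\si{\chi}(a,b) = \#\{ n \geq b:\ n - \chi < a \} = \#\{ n:\ b \leq n < a + \chi \} = \max\{0,\ \chi + a - b\}.
\end{equation*}
Applying this to $\iota_\chi$ and to $\iota_\chi^{-1} = \iota_{-\chi}$ at $(a,b) = (0,0)$ gives $\chi_{\iota_\chi} = \max\{0,\chi\} - \max\{0,-\chi\} = \chi$, so $\iota_\chi$ is itself a member of $\{\alpha \in \asp:\ \ca = \chi\}$.

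For the inequality, suppose $\ca \geq \chi$; by the definition of Bruhat order I must check $\sa(a,b) \geq \si{\chi}(a,b)$ for all $a,b \in \ZZ$. On one hand $\sa(a,b) \geq 0$, being a cardinality. On the other hand Equation \eqref{eq:saDuality} rearranges to $\sa(a,b) = \sai(b,a) + \ca + a - b \geq \ca + a - b \geq \chi + a - b$, using $\sai(b,a) \geq 0$ and $\ca \geq \chi$. Combining the two bounds, $\sa(a,b) \geq \max\{0,\ \chi + a - b\} = \si{\chi}(a,b)$, i.e. $\alpha \geq \iota_\chi$.

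The displayed equation now follows immediately: if $\ca = \chi$ then $\alpha \geq \iota_\chi$ by what was just shown, so $\iota_\chi$ is a lower bound for $\{\alpha \in \asp:\ \ca = \chi\}$, and since it belongs to that set it is its minimum. I do not anticipate any genuine obstacle here; the only points needing care are tracking the direction of Bruhat order correctly and verifying that $\iota_\chi$ has shift exactly $\chi$ rather than merely being comparable to permutations of that shift.
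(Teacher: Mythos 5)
Your proof is correct and uses essentially the same argument as the paper: apply the duality identity \eqref{eq:saDuality} to get $\sa(a,b) = \sai(b,a) + \ca + a - b \geq \max\{0,\chi+a-b\} = s_{\iota_\chi}(a,b)$. Your version is slightly more explicit in that it spells out the computation of $s_{\iota_\chi}$ and verifies $\chi_{\iota_\chi} = \chi$, but the core step is identical.
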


\begin{proof}
For all $a,b \in \ZZ$, $\sa(a,b) \geq \sai(b,a) + \chi + a - b \geq \max \{ 0, \chi+a-b\} = s_{\iota_\chi}(a,b).$
\end{proof}

\newcommand{\sgn}{\operatorname{sgn}}
If we define $\sgn(n)$ to be $1$ for $n \geq 0$ and $-1$ for $n < 0$, then 
$$\ca = \frac12 \sum_{n \in \ZZ} \Big( \sgn(n) - \sgn(\alpha(n)) \Big).$$
It follows from this description, rearranging a sum and cancelling, that $\chi$ is a homomorphism:
\leanEquation{eq:chiHom}
  {\chi_{\alpha \beta} = \chi_\alpha + \chi_\beta.}
  {\doclink{AspPerm.chi_mul}}
Thus shift sorts $\asp$ into cosets of a homomorphism $\chi: \asp \to \ZZ$. 
This homomorphism has a nice splitting, given by $\chi \mapsto \iota_\chi$.
We will see that $\chi$ is also a monoid homomorphism for the Demazure product: $\chi_{\alpha \star \beta} = \ca + \cb$ (Theorem \ref{thm:starExists1}). Furthermore, $\iota_m \star \alpha = \iota_m \alpha$ and $\alpha \star \iota_m = \alpha \iota_m$ for all $\alpha \in \asp$ (this follows from Lemma \ref{lem:reducedStar}, since $\iota_\chi$ has no inversions). This means that essentially everything we might wish to know about $\star$ on $\asp$ is determined by its restriction to $\asp_0 = \{ \alpha \in \asp: \ca = 0 \}$.

\begin{eg}
\label{eg:extAffine}
For any $k \geq 2$, the group of permutations $\alpha: \ZZ \to \ZZ$ such that $\alpha(n+k) = \alpha(n) + k$ for all $n \in \ZZ$ is called an \emph{extended affine symmetric group.} For such a permutation $\alpha$, sorting $\ZZ$ into cosets modulo $k$ and counting sign changes shows that 
$\ds \chi_\alpha = - \frac1{k} \sum_{n=0}^{k-1} \Big(\alpha(n)-n \Big).$
 One standard definition of the (unextended) affine symmetric group $\widetilde{S}_k$ is that its elements satisfy $\alpha(n+k) = \alpha(n)+k$ for all $n$ and $\ds \sum_{n=0}^{k-1} \Big(\alpha(n)-n\Big) = 0$. So in our terminology, this is the shift-$0$ subgroup of the extended affine symmetric group.
\end{eg}

\subsection{Inversions, reduced products, and the weak orders}
Much of our analysis will be based on careful study of \emph{inversions}.

\begin{defn}
\label{defn:Inv}
For a permutation $\alpha \in \asp$, the set of \emph{inversions} of $\alpha$ is 
\hfill\leanlink[AspPerm]{inv_set}
$$\Inv \alpha = \{ (u,v) \in \ZZ^2:\ u<v \mbox{ and } \alpha(u) > \alpha(v) \}.$$
\end{defn}

The set of inversions leads to two other preorders on $\asp$.
The terms below are chosen to mirror terminology from Coxeter groups; see e.g. \cite{bjornerBrenti}, especially \S 1.4, Exercise 1.13, \S 3.1, Proposition 3.1.3, and Corollary 3.1.4.

\begin{defn}
\label{defn:weakOrders}
The \emph{left weak preorder} on $\asp$ is the relation $\leq_L$, where $\alpha \leq_L \beta$ if and only if $\Inv \alpha \subseteq \Inv \beta$. 
The \emph{right weak preorder} $\leq_R$ is defined by $\alpha \leq_R \beta$ if and only if $\Inv \alpha^{-1} \subseteq \Inv \beta^{-1}$, i.e. $\alpha^{-1} \leq_L \beta^{-1}$.
\end{defn}

These are not partial orders on all of $\asp$: for instance, $\iota_m \leq_L \iota_n$ and $\iota_n \leq_L \iota_m$ for all $m,n \in \ZZ$. Proposition \ref{prop:reconstruction} shows that they become partial orders after restricting to permutations with fixed shift.

Closely related to the weak orders is the notion of a reduced product.

\begin{defn}
\label{defn:reducedProduct}
A product $\alpha \beta$ of two permutations is called \emph{reduced} if $\Inv (\alpha) \cap \Inv(\beta^{-1}) = \emptyset$. We write ``$\alpha \starr \beta$'' as a shorthand for ``$\alpha \beta$ is a reduced product.'' The symbol $\starr$ will sometimes be used in a larger statement, in which case it denotes the same thing as $\star$ and simultaneously asserts that the product is reduced; so $\alpha \starr \beta = \gamma$ abbreviates ``$\alpha \star \beta = \gamma$ and $\Inv (\alpha) \cap \Inv (\beta^{-1}) = \emptyset$.''
\hfill\leanlink{AspPerm.ReducedProduct}
\end{defn}

In fact, we prove in Lemma \ref{lem:reducedStar} that $\alpha \beta$ is a reduced product if and only if $\alpha \star \beta = \alpha \beta$, which is why we have chosen the notation $\starr$.

The two weak orders and the notion of reduced products are closely related.

\begin{lemma}
\label{lem:reducedWeakEquivs}
For $\alpha, \beta \in \asp$, the following are equivalent.
\hfill\leanbox{\doclink{AspPerm.reduced_iff_leR}\;\doclink{AspPerm.reduced_iff_leL}}
\begin{multicols}{3}
\begin{enumerate}
\item $\alpha \starr \beta$
\item $\alpha \leq_R \alpha \beta$
\item $\beta \leq_L \alpha \beta$
\end{enumerate}
\end{multicols}
\end{lemma}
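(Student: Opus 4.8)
The plan is to prove the cycle of implications $(1)\Rightarrow(2)\Rightarrow(3)\Rightarrow(1)$, working entirely in terms of inversion sets and the behavior of inversions under composition. The essential computational tool is the classical decomposition of the inversion set of a product: for permutations $\alpha,\beta$ of $\ZZ$, a pair $(u,v)$ with $u<v$ is an inversion of $\alpha\beta$ exactly according to whether $\beta(u),\beta(v)$ are in order or not and whether $\alpha$ inverts them; concretely, one has the disjoint-union description
\[
\Inv(\alpha\beta) = \big( \beta^{-1}(\Inv\alpha) \setminus \Inv\beta \big) \ \sqcup\ \big( \Inv\beta \setminus \beta^{-1}(\Inv\alpha) \big),
\]
where $\beta^{-1}$ acts on a pair by applying $\beta^{-1}$ to both entries and re-sorting. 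This identity holds for all $\alpha,\beta\in\asp$ (all the relevant inversion sets are finite since $\asp$ consists of almost-sign-preserving permutations, but in fact the set-theoretic identity is purely formal and needs no finiteness). I would state and verify this as a preliminary computation, since it is the engine behind all three implications.

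First I would show $(1)\Rightarrow(2)$. The hypothesis $\alpha\starr\beta$ means $\Inv(\alpha)\cap\Inv(\beta^{-1})=\emptyset$. Set $\gamma=\alpha\beta$; then $\gamma^{-1}=\beta^{-1}\alpha^{-1}$, so applying the product formula to $\gamma^{-1}$ (with the roles of the two factors being $\beta^{-1}$ and $\alpha^{-1}$) expresses $\Inv(\gamma^{-1})$ as the symmetric-difference-type union of $\alpha(\Inv\beta^{-1})$ and $\Inv\alpha^{-1}$ relative to each other. The disjointness hypothesis forces the two pieces not to cancel, and a short check shows $\Inv(\alpha^{-1})\subseteq\Inv(\gamma^{-1})$, i.e. $\alpha\leq_R\gamma=\alpha\beta$. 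Symmetrically (or by applying the same formula to $\gamma=\alpha\beta$ directly rather than to its inverse) one gets $(1)\Rightarrow(3)$: $\Inv(\beta)\subseteq\Inv(\alpha\beta)$, i.e. $\beta\leq_L\alpha\beta$. So really $(1)$ implies both $(2)$ and $(3)$ by one symmetric argument, and it remains to close the loop.

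For $(2)\Rightarrow(1)$ (and symmetrically $(3)\Rightarrow(1)$), I would argue by contrapositive: if $(u,v)\in\Inv(\alpha)\cap\Inv(\beta^{-1})$, I want to produce an inversion of $\alpha^{-1}$ that is not an inversion of $(\alpha\beta)^{-1}=\beta^{-1}\alpha^{-1}$. The pair $(\alpha(v),\alpha(u))$ (note $\alpha(u)>\alpha(v)$ since $(u,v)\in\Inv\alpha$) is an inversion of $\alpha^{-1}$; I claim it is killed by post-composing with $\beta^{-1}$, precisely because $(u,v)\in\Inv(\beta^{-1})$ means $\beta^{-1}$ un-inverts the pair $(u,v)$, which are the images under $\alpha^{-1}$ of $\alpha(u),\alpha(v)$. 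Tracking this through the product formula shows $(\alpha(v),\alpha(u))\notin\Inv(\beta^{-1}\alpha^{-1})$, hence $\Inv(\alpha^{-1})\not\subseteq\Inv((\alpha\beta)^{-1})$, i.e. $\alpha\not\leq_R\alpha\beta$. The analogous bookkeeping gives $(3)\Rightarrow(1)$. The main obstacle is purely notational: keeping straight which inversion sets live in the domain versus the codomain and correctly re-sorting pairs after applying $\alpha^{-1}$ or $\beta^{-1}$; once the product formula for $\Inv(\alpha\beta)$ is pinned down cleanly, each implication is a two-line unwinding.
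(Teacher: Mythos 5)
Your approach is correct and lands on the same underlying computation as the paper: tracking how inversions transform under composition. The paper's proof is tighter, though — rather than proving a cycle of implications via a general decomposition of $\Inv(\alpha\beta)$, it directly exhibits the bijections $(u,v)\mapsto(\alpha(v),\alpha(u))$ and $(u,v)\mapsto(\beta^{-1}(v),\beta^{-1}(u))$ between the three sets $\Inv\alpha\cap\Inv\beta^{-1}$, $\Inv\alpha^{-1}\setminus\Inv\beta^{-1}\alpha^{-1}$, and $\Inv\beta\setminus\Inv\alpha\beta$, whose simultaneous (non)emptiness \emph{is} the equivalence of (1), (2), (3). Your ``product formula'' $\Inv(\alpha\beta)=\beta^{-1}(\Inv\alpha)\mathbin{\triangle}\Inv\beta$ is a valid lemma and would do the job, but it is doing more work than is needed here; the paper's trick of matching elements of the three sets pairwise avoids any case-splitting on whether a pair lies in $\Inv\beta$ or not. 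One small factual slip: you assert parenthetically that elements of $\asp$ have finite inversion sets, which is false (e.g.\ the permutation swapping $2n\leftrightarrow 2n+1$ for all $n$ is in $\asp$ and has infinitely many inversions). You immediately and correctly note that finiteness is irrelevant to the set-theoretic identity, so this does not damage the argument, but the claim should be deleted.
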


\begin{proof}
A pair $(u,v) \in \ZZ^2$ belongs to $\Inv \alpha \cap \Inv \beta^{-1}$ if and only if $$(\alpha(v), \alpha(u)) \in \Inv \alpha^{-1} \backslash \Inv \beta^{-1} \alpha^{-1},$$ and similarly $(u,v) \in \Inv \alpha \cap \Inv \beta^{-1}$ if and only if $(\beta^{-1}(v), \beta^{-1}(u)) \in \Inv \beta \backslash \Inv \alpha \beta$. So the three sets $\Inv \alpha \cap \Inv \beta^{-1}$, $\Inv \alpha^{-1} \backslash \Inv \beta^{-1} \alpha^{-1}$, $\Inv \beta \backslash \Inv \alpha \beta$ are either all empty or all nonempty.
\end{proof}

\begin{rem}
Lemma \ref{lem:invStar} and Theorem \ref{thm:reduce} will imply the following alternative description of the weak orders: $\alpha \leq_L \beta$ (resp. $\alpha \leq_R \beta$) if and only if there exists some $\gamma \in \asp$ such that $\gamma \star \alpha = \beta$ (resp. $\alpha \star \gamma = \beta$). This gives a way to remember which is ``left'' and ``right:'' it depends on which side the extra permutation $\gamma$ is placed.
\end{rem}

\begin{warning}
\label{warning:weakBruhat}
The Bruhat order on $\asp$ has some counterintuitive features when used to compare permutations of different shift. For example, Lemma \ref{lem:bruhatInverse} is false without the assumption $\ca = \cb$; a counterexample is $\iota_0 \leq \iota_1$ but $\iota_0 \geq \iota_{-1}$.
In contrast to the situation in $S_d$ or other Coxeter subgroups of $\asp$, and indeed seemingly against basic decency of word choice, $\alpha \leq_L \beta$ does \emph{not} imply that $\alpha \leq \beta$ in $\asp$, so $\leq_L$ is not actually a ``weaker'' partial order than $\leq$. For example, $\iota_m \leq_L \iota_n$ for all $m,n \in \ZZ$. However, this implication is valid if we assume $\chi_\alpha = \chi_\beta$ (Corollary \ref{cor:weakStrong}).
\end{warning}

\subsection{Reconstruction from the inversion set} \label{ssec:recon}
This subsection is not logically needed elsewhere in the present paper. It is included to give another conceptual view on $\asp$: the permutations in $\asp$ are in bijection with the pairs consisting of a shift $\chi$ and a subset $I \subseteq \ZZ^2$ satisfying some simple assumptions. Characterizations of this kind are known in many similar contexts; see for example \cite[\S 2.3]{hohlwegLabbe}. The utility is that this provides a convenient route to efficiently describing the set of all permutations below a fixed $\alpha \in \asp$ in the weak preorders.

The first observation is that the shift and inversion set uniquely determine the permutation.

\begin{prop} \label{prop:reconstruction}
Any $\alpha \in \asp$ is determined by $\Inv \alpha$ and $\ca$ as follows. For all $n \in \ZZ$,
\hfill\leanlink{AspPerm.reconstruction}
\[ \alpha(n) = n - \ca + \# \left\{ v \in \ZZ:\ (n,v) \in \Inv \alpha \right\} - \# \left\{ u \in \ZZ:\ (u,n) \in \Inv \alpha \right\}. \]
\end{prop}

\begin{proof}
Observe that $\sa(\alpha(n),n) = \# \{ v \geq n:\ \alpha(v) < \alpha(n) \} = \# \{ v \in \ZZ:\ (n,v) \in \Inv \alpha \}$, and $\sai(n,\alpha(n)) = \# \{ u < n:\ \alpha(u) \geq \alpha(n) \} = \# \{ u \in \ZZ:\ (u,n) \in \Inv \alpha \}$. Equation \eqref{eq:saDuality} now rearranges to the reconstruction identity.
\end{proof}

So mapping a permutation $\alpha$ to the pair $(\ca, \Inv \alpha)$ is an injective map, and one may ask for a characterization of its image. Such a characterization is happily quite simple.

\begin{defn} \label{defn:aspSet}
An \emph{ASP set} is a subset $I \subseteq \ZZ^2$ with the following
properties.
\hfill\leanlink[InvSet]{AspSet}
\begin{enumerate}
\item If $(u,v) \in I$, then $u<v$.
\item If $(u,v),(v,w) \in I$, then $(u,w) \in I$ ($I$ is \emph{closed}).
\item If $u<v<w$ and $(u,v),(v,w) \notin I$, then $(u,w) \notin I$ ($I$ is \emph{coclosed}).
\item For every $n \in \ZZ$, the two sets
\[
\Out_I(n) = \{v \in \ZZ:\ (n,v) \in I\},
\qquad
\In_I(n) = \{u \in \ZZ:\ (u,n) \in I\}
\]
are finite ($I$ is \emph{locally finite}).
\end{enumerate}

\end{defn}

\begin{thm}
\label{thm:aspSetReconstruction}
Let $I \subseteq \ZZ^2$ and let $\chi \in \ZZ$. 
There exists a unique $\alpha \in \asp$ such that $\Inv \alpha = I$ and $\ca = \chi$ if and only if $I$ is an ASP set. 
Consequently, the map
\(
\alpha \longmapsto (\Inv\alpha,\ca)
\)
is a bijection from $\asp$ to the set of pairs consisting of an ASP set and an
integer.
\hfill\leanbox{\doclink[InvSet]{AspSet.invSets_of_AspPerms}\;\doclink[InvSet]{AspSet.AspPerm_equiv_AspSet}}
\end{thm}

\begin{proof}
It is straightforward to see that, if such an $\alpha$ exists, then $I$ is an ASP set, and the reconstruction formula of Proposition \ref{prop:reconstruction} shows that $\alpha$ is unique. Therefore it suffices to prove existence when $I$ is an ASP set.
Fix an ASP set $I$ and an integer $\chi$.  Define a function $\alpha : \ZZ \to \ZZ$ in the only possible way consistent with the reconstruction formula:
\begin{equation}\label{eq:aspFromSet}
  \alpha(n) = n - \chi + \#\Out_I(n) - \#\In_I(n).
\end{equation}

Define also a relation $\prec_I$ on $\ZZ$ as follows:
\[
m \prec_I n
\quad \Longleftrightarrow \quad
\big(m<n \text{ and } (m,n) \notin I\big)
\quad \text{or} \quad
\big(n<m \text{ and } (n,m) \in I\big).
\]

We claim that $\prec_I$ is a strict linear order on $\ZZ$. For any $a,b \in \ZZ$, the definition immediately shows that exactly one of $a \prec_I b$, $a = b$, or $b \prec_I a$ holds, so it suffices to show that $\prec_I$ is transitive. This follows from the ``closed'' and ``coclosed'' conditions on $I$, by doing casework on the possible relative orderings of three integers $a,b,c$ satisfying $a \prec_I b$ and $b \prec_I c$.

Having defined this alternative linear order on $\ZZ$, we may use it to define $I$-intervals:
\[ [m,n)_I = \{\ell \in \ZZ \colon m \preceq_I \ell \prec_I n\}, \]
where $\preceq_I$ denotes the reflexive closure of $\prec_I$.
When $m<n$, both $[m,n)_I$ and $[n,m)_I$ are finite: the first is contained in $[m,n) \cup \In_I(m) \cup \Out_I(n)$, and the second is contained in $[m,n) \cup \In_I(n) \cup \Out_I(m)$. The key observation is the following identity of cardinalities. Let $m,n$ be integers with $m < n$. Noting that all sums below have finite support, we have

\begin{align*}
\# [m,n)_I - \# [n,m)_I &= \sum_{\ell \in \ZZ} \left[ \delta(\ell \prec_I n) - \delta(\ell \prec_I m) \right]\\
&= \sum_{\ell \in \ZZ} \Big[ \delta( \ell \in \Out_I(n)) + \delta( \ell < n) - \delta( \ell \in \In_I(n)) \\
&\hspace{1cm} - \delta(\ell \in \Out_I(m)) - \delta(\ell < m) + \delta(\ell \in \In_I(m))
\Big]\\
&= \# [m,n) + \# \Out_I(n) - \# \In_I(n)  - \# \Out_I(m) + \# \In_I(m)\\
&= \alpha(n) - \alpha(m).
\end{align*}

Applying this identity with the smaller of $m,n$ first, and swapping $m,n$ if necessary, gives the following formula, valid for all distinct $m,n \in \ZZ$.
\[ \alpha(n) - \alpha(m) = \begin{cases} \# [m,n)_I & \text{if } m \prec_I n,\\
- \# [n,m)_I & \text{if } n \prec_I m.
\end{cases} \]

From this, it immediately follows that for all $m,n \in \ZZ$, $\alpha(m) < \alpha(n)$ if and only if $m \prec_I n$.
In particular $\alpha$ is injective. To see that $\alpha$ is surjective, observe first that local finiteness lets us build arbitrarily large and small values of $\alpha$: from any $x$ one can choose $y>x$ with $y \notin \Out_I(x)$, giving $x \prec_I y$ and hence $\alpha(x)<\alpha(y)$, and similarly choose $y<x$ with $y \notin \In_I(x)$, giving $y \prec_I x$ and hence $\alpha(y)<\alpha(x)$. Thus for any $k \in \ZZ$, there exist $m,n \in \ZZ$ with $\alpha(m) \le k < \alpha(n)$. Now $\alpha$ maps $[m,n)_I$ injectively to the interval $[\alpha(m),\alpha(n))$, which has the same cardinality, so $k$ must be in its image.

For $u<v$, the order comparison above gives $(u,v) \in I$ if and only if $\alpha(u)>\alpha(v)$, so $\Inv(\alpha)=I$. The definition of ASP set implies that $\alpha$ is almost-sign-preserving. Comparing Equation \eqref{eq:aspFromSet} with the reconstruction identity shows that $\alpha$ has shift $\chi$.
\end{proof}

\section{Slipface functions}
\label{sec:slipface}

In this section, we define a broader class of functions $\ZZ^2 \to \NN$, possessing several of the features ascribed to the slipface functions $\sa$. We call this broader class \emph{slipfaces} as well, and will characterize those slipface functions equal to $\sa$ for some $\alpha \in \asp$ as \emph{submodular slipfaces.} There are two reasons to step back to this more general setting. First, many of our proofs are naturally organized by first reasoning about general slipfaces and then adding the submodularity condition. Second, in our intended application to metric graphs, non-submodular slipfaces naturally occur.

\begin{defn}
\label{defn:slipface}
Let $\chi$ be an integer. A \emph{slipface function of shift $\chi$} is a function $s: \ZZ^2 \to \NN$ such that for all $a,b \in \ZZ$,
\begin{enumerate}[label=(S\arabic*)]
\item $0 \leq s(a+1,b) - s(a,b) \leq 1$ and $0 \leq s(a,b) - s(a,b+1) \leq 1$;
\item $s(a,b) \geq \max \{ 0, \chi + a - b \}$;
\item Given $b \in \ZZ$, for all but finitely many $a' \in \ZZ$, $s(a',b) = \max\{0, \chi + a' - b\}$. Given $a \in \ZZ$, for all but finitely many $b' \in \ZZ$, $s(a,b') = \max\{0, \chi + a - b'\}$.
\end{enumerate}
A function $s$ is called a \emph{slipface} if it is a slipface of some shift; the shift is uniquely determined by $s$ and denoted $\chi_s$. Denote by $\slip_\chi$ the set of slipfaces of shift $\chi$, and $\slip$ the set of all slipfaces. 
\hfill\leanlink[SlipFace]{SlipFace}
\end{defn}

\begin{defn}
Partially order $\slip$ as follows: $s \leq t$ means that $s(a,b) \leq t(a,b)$ for all $a,b \in \ZZ$. We call this the \emph{Bruhat order} on $\slip$.
\hfill\leanlink{SlipFace.instPartialOrder}
\end{defn}

\begin{eg}
\label{eg:saSlipface}
For $\alpha \in \asp$, the discussion in Section \ref{sec:aspPrelim} shows that $\sa \in \slip_{\ca}$. For axiom (S3), the almost-sign-preserving condition forces $\sa(a,b)$ to be $0$ far enough in the directions $b \gg 0$ and $a \ll 0$, and Equation \eqref{eq:saDuality} gives equality in (S2) in the opposite directions.
\hfill\leanlink{AspPerm.s}
\end{eg}

Slipfaces possess a duality, analogous to Equation \eqref{eq:saDuality}.

\begin{defn}
\label{defn:sdual}
If $s \in \slip_\chi$, then the \emph{dual slipface} is the function $s^\vee \in \slip_{-\chi}$ characterized by
\hfill\leanlink{SlipFace.dual}
$$s(a,b) - s^\vee(b,a) = \chi + a - b.$$
\end{defn}

Substituting into Definition \ref{defn:slipface} shows $s^\vee$ is indeed a slipface, with shift $-\chi$.

\begin{eg}
\label{eg:sai}
Equation \eqref{eq:saDuality} demonstrates that, for all $\alpha \in \asp$, 
\hfill\leanlink{AspPerm.s_dual}
$$\sa^\vee = \sai.$$
\end{eg}

One convenient aspect of duality is that it provides a streamlined criterion for checking that a given function is indeed a slipface, and determining its shift, in many situations. For a function $f: \ZZ^2 \to \ZZ$, we consider the following two criteria on $f$.
\begin{enumerate}[label=(D\arabic*)]
\item For all $a,b \in \ZZ$, $f(a+1,b) \geq f(a,b)$ and $f(a,b+1) \leq f(a,b)$.
\item For fixed $a$ and any $b \gg 0$, $f(a,b) = 0$; similarly, for fixed $b$ and any $a \ll 0$, $f(a,b) = 0$.
\end{enumerate}
\begin{lemma}
\label{lem:dualCrit}
Suppose that $s,t$ are two functions $\ZZ^2 \to \ZZ$ and $\chi$ is an integer, satisfying the equation
$$ s(a,b) - t(b,a) = \chi + a - b$$
for all $a,b \in \ZZ$. Then $s \in \slip_\chi$ if and only if both $s$ and $t$ satisfy conditions (D1) and (D2). If so, then also $t \in \slip_{-\chi}$ and $t = s^\vee$.
\hfill\leanbox{\doclink{SlipFace.sf_of_D_props}\;\doclink{SlipFace.D_props_of_sf}}
\end{lemma}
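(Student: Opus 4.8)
The plan is to exploit the fact that the defining relation $s(a,b) = t(b,a) + \chi + a - b$ lets one pass freely between $s$ and $t$, and that under this substitution the axioms (S1)--(S3) of Definition~\ref{defn:slipface} and the auxiliary conditions (D1)--(D2) transform into one another. I would prove the two implications separately, using the symmetry of the situation under $s \leftrightarrow t$, and then deduce the last clause from the observation recorded after Definition~\ref{defn:sdual}.

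For the forward direction, assume $s \in \slip_\chi$. Conditions (D1) and (D2) for $s$ are immediate: (D1) is exactly the lower inequalities in (S1), and (D2) follows from (S3), since the eventual value $\max\{0,\chi+a-b\}$ equals $0$ once $b$ is large (resp.\ $a$ is small), and a finite exceptional set can be avoided by passing further out. For $t$, I would substitute $t(a,b) = s(b,a) + a - b - \chi$: differencing in either variable converts the bounds in (S1) into the monotonicity required by (D1), and for (D2) I would use (S3) in the sharper form it actually provides---that $s(b,a) = \chi + b - a$ once $b \gg 0$ or $a \ll 0$---so that $t(a,b) = s(b,a) + a - b - \chi$ collapses to $0$ there.

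For the reverse direction, assume $s$ and $t$ both satisfy (D1) and (D2). The one small observation I would isolate first is that \emph{any} function satisfying (D1) and (D2) is nonnegative: being nonincreasing in the second variable by (D1) and eventually $0$ by (D2), it is bounded below by its eventual value $0$. Applying this to both $s$ and $t$, I then verify (S1)--(S3) for $s$. Condition (S1) gets its lower inequalities from (D1) for $s$ and its upper inequalities by differencing the relation and invoking (D1) for $t$. Condition (S2) holds because $s \ge 0$ by the observation and $s(a,b) = t(b,a) + \chi + a - b \ge \chi + a - b$ since $t \ge 0$ (this also shows $s$ is $\NN$-valued). Condition (S3) holds because outside a finite set one of $a,b$ is extreme: when the relevant variable makes $\chi+a-b<0$, (D2) for $s$ forces $s(a,b)=0$; when it makes $\chi+a-b>0$, (D2) for $t$ forces $t(b,a)=0$ and hence $s(a,b)=\chi+a-b$; in both cases $s(a,b)=\max\{0,\chi+a-b\}$.

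Finally, once $s \in \slip_\chi$ is established, the remark after Definition~\ref{defn:sdual} gives that $s^\vee \in \slip_{-\chi}$ exists and is the unique function satisfying $s(a,b) - s^\vee(b,a) = \chi + a - b$; since $t$ satisfies the same equation identically, $t = s^\vee \in \slip_{-\chi}$. I do not expect a genuine obstacle: the content is a dictionary between the $s$-side and $t$-side conditions through the relation, and the only point requiring a moment's thought is the implication (D1)$\,+\,$(D2)$\,\Rightarrow f \ge 0$, which is precisely what makes the $\NN$-valuedness and condition (S2) come for free. The main thing to be careful about is the sign bookkeeping when differencing $t(a,b) = s(b,a) + a - b - \chi$ in each variable.
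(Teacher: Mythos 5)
Your proof is correct and follows essentially the same route as the paper: you translate (S1) into (D1) for $s$ (lower bounds) and (D1) for $t$ (upper bounds, via differencing the defining relation), reduce (S2) to nonnegativity of both $s$ and $t$, and get (S3) from (D2). The one observation you isolate explicitly --- that (D1) plus (D2) forces nonnegativity --- is left implicit in the paper's very terse proof, and spelling it out is the right thing to do.
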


\begin{proof}
Criterion (D1) is equivalent to the lower bounds in (S1) when applied to $s$, and the upper bounds in (S1) when applied to $t$. Criterion (S2) is equivalent to both $s$ and $t$ being nonnegative, and equality holds in (S2) whenever either $s$ or $t$ vanishes, so (S3) follows from (D2) applied to both $s$ and $t$. Conversely, (D1) follows from (S1) and (D2) follows from (S3).
\end{proof}

\subsection{The operations $\star, \resL, \resR$ on $\slip$}
We are concerned with the following operations on $\slip$.

\begin{defn}
\label{defn:sfAlgebra}
Let $s, t \in \slip$. Define functions $s \star t, s \resL t$ and $s \resR t$ as follows.
\begin{flalign*}
&& s \star t(a,b) &=\min_{\ell \in \ZZ}\ \Big[ s(a,\ell) + t(\ell,b) \Big] & \leanlink{SlipFace.star_eq_min}\\
&& s \resL t (a,b) &= \max_{\ell \in \ZZ}\ \Big[ s(a,\ell) - t^\vee(b,\ell) \Big] & \leanlink{SlipFace.lres_eq_max}\\
&& s \resR t (a,b) &= \max_{\ell \in \ZZ}\ \Big[ t(\ell,b) - s^\vee(\ell,a) \Big] & \leanlink{SlipFace.rres_eq_max}
\end{flalign*}
\end{defn}

To see that the maxima defining $\resL$ and $\resR$ exist, note that as $k \to \pm \infty$, the fact that $s,t$ are slipfaces means that the expressions $s(a,k) - t^\vee(b,k)$ and $s^\vee(k,b) - t(k,a)$ converge, with one of the two limits converging to $0$. This implies that the maxima are well-defined, and nonnegative.

The following compatibilities with $\leq$ are immediate from this definition.

\begin{lemma}
\label{lem:compatLeq}
The product $s \star t$ is nondecreasing in both $s$ and $t$ (in Bruhat order). That is, if $s_1 \leq s_2$ and $t_1 \leq t_2$, then $s_1 \star t_1 \leq s_2 \star t_2$. On the other hand, $s \resL t^\vee$ and $t^\vee \resR s$ are both nondecreasing in $s$ and \emph{nonincreasing} in $t$.
\end{lemma}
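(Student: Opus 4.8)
The statement to prove is Lemma~\ref{lem:compatLeq}, which asserts monotonicity properties of $\star$, $\tll$, and $\tlr$ with respect to Bruhat order.

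\medskip

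\textbf{Plan.} The claim is, as the author says, ``immediate from the definition,'' so the proof is short: each operation is defined by taking a pointwise minimum or maximum over $\ell \in \ZZ$ of an expression that is monotone in the relevant argument, and min/max of monotone families is monotone. I would carry this out in three pieces.

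\medskip

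First, for $\star$: by definition $s \star t(a,b) = \min_{\ell} [s(a,\ell) + t(\ell,b)]$. If $s_1 \le s_2$ and $t_1 \le t_2$, then for every fixed $\ell$ we have $s_1(a,\ell) + t_1(\ell,b) \le s_2(a,\ell) + t_2(\ell,b)$; taking the minimum over $\ell$ on both sides preserves the inequality (the minimum of a pointwise-smaller function is smaller), so $s_1 \star t_1(a,b) \le s_2 \star t_2(a,b)$ for all $a,b$, i.e.\ $s_1 \star t_1 \le s_2 \star t_2$.

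\medskip

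Second, for $\tll$: here I need to be careful about the variance, because the statement is about $s \tll t^\vee$, not $s \tll t$ directly. By definition $s \tll t^\vee(a,b) = \max_{\ell}[s(a,\ell) - (t^\vee)^\vee(b,\ell)] = \max_\ell [s(a,\ell) - t(b,\ell)]$, using that $(t^\vee)^\vee = t$ (immediate from Definition~\ref{defn:sdual}, since the defining equation is symmetric in the roles of $s$ and $s^\vee$ up to sign). Now if $s_1 \le s_2$ then each summand increases in $s$, and the max over $\ell$ increases; if $t_1 \le t_2$ then $-t_1(b,\ell) \ge -t_2(b,\ell)$, so each summand decreases, and the max decreases. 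Hence $s \tll t^\vee$ is nondecreasing in $s$ and nonincreasing in $t$. The analogous computation for $t^\vee \tlr s$: $t^\vee \tlr s(a,b) = \max_\ell[(t^\vee)(\ell,b) - s^\vee(\ell,a)]$ — wait, I should re-read the definition. We have $s' \tlr t'(a,b) = \max_\ell[t'(\ell,b) - (s')^\vee(\ell,a)]$, so $t^\vee \tlr s(a,b) = \max_\ell[s(\ell,b) - (t^\vee)^\vee(\ell,a)] = \max_\ell[s(\ell,b) - t(\ell,a)]$, which again is nondecreasing in $s$ and nonincreasing in $t$ by the same monotone-max argument.

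\medskip

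\textbf{Main obstacle.} There is essentially no obstacle of substance; the only thing requiring any care is bookkeeping — specifically, recognizing that the statement is phrased in terms of $t^\vee$ so that one first applies the double-dual identity $(t^\vee)^\vee = t$, after which the $t$-dependence inside the max/min becomes the plain $-t(\cdot,\cdot)$ term whose sign makes the dependence ``nonincreasing.'' One should also note in passing that all the minima and maxima in question exist (this was already established in the paragraph following Definition~\ref{defn:sfAlgebra} for $\tll,\tlr$, and for $\star$ it is because the expression $s(a,\ell)+t(\ell,b)$ is nonnegative and, as $\ell \to \pm\infty$, becomes eventually equal to a function tending to $+\infty$ in one direction and stabilizing/growing in the other, so the min is attained). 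Beyond that, the proof is just the observation that $\min$ and $\max$ are monotone operations applied termwise.
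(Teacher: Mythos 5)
Your proof is correct, and it matches the paper's intent exactly: the paper gives no proof for this lemma, merely noting it is ``immediate from this definition,'' and your argument is the straightforward unfolding — monotonicity of pointwise $\min$ and $\max$, together with the double-dual identity $(t^\vee)^\vee = t$ to expose the sign of the $t$-dependence inside $\tll$ and $\tlr$.
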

\hfill\leanbox{\doclink{SlipFace.star_mono}\;\doclink{SlipFace.lres_mono}\;\doclink{SlipFace.rres_mono}}

\begin{prop}
\label{prop:sfAlgebraDefined}
Let $s, t \in \slip$. Then $s \star t, s \resL t$ and $s \resR t$ are all slipfaces of shift $\chi_s + \chi_t$. These operations satisfy the identities $(s \star t)^\vee = t^\vee \star s^\vee$ and $(s \resL t)^\vee = t^\vee \resR s^\vee$.
\hfill\leanbox{\doclink{SlipFace.chi_star}\;\doclink{SlipFace.chi_lres}\;\doclink{SlipFace.chi_rres}\;\doclink{SlipFace.star_dual}\;\doclink{SlipFace.lres_dual}\;\doclink{SlipFace.rres_dual}}
\end{prop}

\begin{proof}
Consider first the operation $\star$. Note first that $s \star t(a,b)$ is well-defined and nonnegative for all $a,b$ since $s,t$ are nonnegative. Writing $s(a,\ell) = s^\vee(\ell,a) + \chi_s + a -\ell$ and $t(\ell,b) = t^\vee(b,\ell) + \chi_t + \ell - b$ and unwinding definitions shows that
$$s \star t (a,b) = t^\vee \star s^\vee(b,a) + \chi_s + \chi_t + a - b$$
for all $a, b \in \ZZ$. By Lemma \ref{lem:dualCrit}, it suffices to verify that $s \star t$ and $t^\vee \star s^\vee$ satisfy (D1) and (D2). We need only check this for $s \star t$ since the same argument will also apply to $t^\vee \star s^\vee$. Criterion (D1) follows from $s(a+1,\ell) \geq s(a,\ell)$ and $t(\ell,b+1) \leq t(\ell,b)$. To verify (D2), fix $a \in \ZZ$. For $\ell \gg 0$, $s(a,\ell) = 0$; fixing such a $\ell$ and taking $b \gg 0 $, $t(\ell,b) = 0$ and thus $s \star t(a,b) = 0$. A similar argument shows that for any $b \in \ZZ$, $s \star t(a,b) = 0$ for all $a \ll 0$. This verifies (D2), completing the proof that $s \star t \in \slip_{\chi_s + \chi_t}$, with dual $t^\vee \star s^\vee$.

Now consider the operations $\resL$ and $\resR$.
We claim that for all $s,t \in \slip$, both $s \resL t$ and $s \resR t$ satisfy conditions (D1) and (D2). Criterion (D1) follows from the fact that $s,t$ and their duals satisfy (D1); (D2) requires a bit of casework. To verify (D2) for $s \resL t$, let $L(a,b)$ denote the set of integers $\ell$ such that $s(a,\ell) > t^\vee(b,\ell)$. Observe that $L(a,b+1) \subseteq L(a,b)$ and $L(a-1,b) \subseteq L(a,b)$; we must show that if one of $a,b$ is fixed and the other is decreased or increased (respectively) the set $L(a,b)$ eventually shrinks to $\emptyset$. This follows from two observations. First, $L(a,b)$ is finite provided that $\chi_s+\chi_t + a - b \leq 0$, by considering the limit of $s(a,\ell) - t^\vee(b,\ell)$ as $\ell$ tends to $\pm \infty$. Second, for any particular $a,b,\ell \in \ZZ$, $\ell$ eventually drops out of $L(a,b)$ as $a$ is decreased or $b$ is increased. More precisely, for $a,\ell$ fixed and $b$ sufficiently large, $s(a,\ell) - t^\vee(b,\ell) = s(a,\ell) - \chi_t - b + \ell \leq 0$ and thus $\ell \not\in L(a,b)$; similarly for $\ell,b$ fixed and $a$ sufficiently small, $s(a,\ell) - t^\vee(b,\ell) = 0 - t^\vee(b,\ell) \leq 0$ and thus $\ell \not\in L(a,b)$. Therefore $s \resL t$ satisfies (D2). A similar argument shows that $s \resR t$ satisfies (D2): we may define $L(a,b) = \{\ell \in \ZZ: t(\ell,b) < s^\vee(\ell,a)\}$; then $L(a,b)$ is finite provided that $\chi_s + \chi_t + a - b \leq 0$, and any particular $\ell \in L(a,b)$ eventually drops out as $a$ is decreased or $b$ is increased.

We now claim that $s \resL t$ and $t^\vee \resR s^\vee$ are dual slipfaces, with $\chi_{s \resL t} = \chi_s + \chi_t$. Writing $s(a,\ell) = s^\vee(\ell,a) + \chi_s + a - \ell$ and $t^\vee(b,\ell) = t(\ell,b) -\chi_t + b - \ell$ and unwinding definitions shows that
$$s \resL t (a,b) = t^\vee \resR s^\vee(b,a) + \chi_s + \chi_t  + a - b$$
for all $a,b \in \ZZ$. Since $s \resL t$ and $t^\vee \resR s^\vee$ both satisfy (D1) and (D2), Lemma \ref{lem:dualCrit} implies that they are dual slipfaces with $\chi_{s \resL t} = \chi_s + \chi_t$. Replacing $s,t$ by $t^\vee, s^\vee$ in this argument shows that $s \resR t$ is also a slipface of shift $\chi_s + \chi_t$ and dual $t^\vee \resL s^\vee$.
\end{proof}

\begin{lemma}
\label{lem:sfOpChar}
Let $s,t,u \in \slip$. The following are equivalent.
\hfill\leanbox{\doclink{SlipFace.ge_star_iff_ge_rres}\;\doclink{SlipFace.ge_star_iff_ge_lres}}
\begin{multicols}{3}
\begin{enumerate}
\item $s \star t \geq u$
\item $s \geq u \resL t^\vee$
\item $t \geq s^\vee \resR u$
\end{enumerate}
\end{multicols}
In other words, the operators $\resL, \resR$ are the following minima in Bruhat order.
$$
u \resL t^\vee = \min \{ s \in \slip: s \star t \geq u\}, \mbox{ and } 
s^\vee \resR u = \min \{t \in \slip: s \star t \geq u\}.
$$
\end{lemma}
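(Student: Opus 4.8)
The plan is to reduce each of the three conditions to one and the same universally quantified inequality over triples of integers, so that the equivalences become a matter of relabeling dummy variables. The only preliminary needed is that duality is an involution, $(t^\vee)^\vee = t$, which is immediate from Definition \ref{defn:sdual} by writing out the defining equation twice.

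First I would unwind (1). Since $s \star t(a,b) = \min_{\ell \in \ZZ}\big[s(a,\ell) + t(\ell,b)\big]$, and this minimum is a genuine finite value by Proposition \ref{prop:sfAlgebraDefined}, the inequality $s \star t \geq u$ is equivalent to
\[
s(a,\ell) + t(\ell,b) \;\geq\; u(a,b) \quad \text{for all } a,b,\ell \in \ZZ. \qquad (\ast)
\]
Next I would unwind (2): using $(t^\vee)^\vee = t$, Definition \ref{defn:sfAlgebra} gives $u \tll t^\vee(a,b) = \max_{\ell}\big[u(a,\ell) - t(b,\ell)\big]$, and this maximum is attained (existence noted just after Definition \ref{defn:sfAlgebra}), so $s \geq u \tll t^\vee$ is equivalent to $s(a,b) + t(b,\ell) \geq u(a,\ell)$ for all $a,b,\ell$. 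Renaming $(a,b,\ell)\mapsto(a,\ell,b)$ turns this into exactly $(\ast)$, so (1) $\Leftrightarrow$ (2). The one point to keep straight here is the purely formal equivalence ``$\min_\ell[\cdots]\ge c \iff \forall\ell\ [\cdots]\ge c$'' (and dually for the max), which is legitimate precisely because Proposition \ref{prop:sfAlgebraDefined} and the remarks preceding it guarantee these extrema are genuine values.

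For (1) $\Leftrightarrow$ (3) I would argue identically: Definition \ref{defn:sfAlgebra} with $(t^\vee)^\vee = t$ gives $s^\vee \tlr u(a,b) = \max_{\ell}\big[u(\ell,b) - s(\ell,a)\big]$, so $t \geq s^\vee \tlr u$ is equivalent to $s(\ell,a) + t(a,b) \geq u(\ell,b)$ for all $a,b,\ell$, which is $(\ast)$ after the relabeling $(a,b,\ell)\mapsto(\ell,a,b)$. (This could alternatively be deduced from (1) $\Leftrightarrow$ (2) together with the identities $(s\star t)^\vee = t^\vee\star s^\vee$ and $(s\tll t)^\vee = t^\vee\tlr s^\vee$ of Proposition \ref{prop:sfAlgebraDefined}, but the direct computation is shorter.)

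Finally, for the displayed ``in other words'' statement: Proposition \ref{prop:sfAlgebraDefined} ensures $u \tll t^\vee \in \slip$ and $s^\vee \tlr u \in \slip$, and the equivalence (1) $\Leftrightarrow$ (2) says $\{s \in \slip : s \star t \geq u\} = \{s \in \slip : s \geq u \tll t^\vee\}$. This set contains $u \tll t^\vee$ (trivially $u \tll t^\vee \geq u \tll t^\vee$) and is dominated by it, hence has Bruhat-minimum $u \tll t^\vee$; the statement for $s^\vee \tlr u$ follows the same way from (1) $\Leftrightarrow$ (3). I do not expect a substantive obstacle here: the content is entirely the definitional unwinding plus careful bookkeeping of which index sits in which slot, and everything rests on the already-established facts that $\star$, $\tll$, $\tlr$ are well-defined on $\slip$ and that $\vee$ is an involution.
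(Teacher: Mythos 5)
Your proof is correct and takes exactly the same approach as the paper: the paper's proof is the single line that each condition is equivalent to $s(a,\ell) + t(\ell,b) \geq u(a,b)$ for all $a,b,\ell \in \ZZ$, and you have simply spelled out the unwinding in detail (the only blemish is that the relabeling for (3) should swap $a$ and $\ell$, i.e.\ $(a,b,\ell)\mapsto(\ell,b,a)$, not $(\ell,a,b)$, but as a dummy-variable permutation this is inconsequential).
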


\begin{proof}
Each is equivalent to $s(a,\ell) + t(\ell,b) \geq u(a,b)$ holding for all $a,b,\ell \in \ZZ$.
\end{proof}

\begin{rem}
\label{rem:residuated}
Lemma~\ref{lem:sfOpChar} expresses a \emph{residuation property} for $\slip$, equipped with $\le$ and $\star$.
This means that, if we regard $\slip$ as a category with a unique morphism $s \to t$ whenever $s \le t$, then the functor $\bullet \star t$ has adjoint $\bullet \resL t^\vee$, and likewise $s \star \bullet$ has adjoint $s^\vee \resR \bullet$. This is the reason for the choice of terminology \emph{residual} for $\resL$ and $\resR$.
\end{rem}

\begin{lemma}
\label{lem:sfAlgebra}
The operations $\star, \resL, \resR$ on $\slip$ satisfy the following identities.
\begin{enumerate}
\item $s \star (t \star u) = (s \star t ) \star u$ ($\star$ is associative). \hfill \leanlink{SlipFace.star_assoc}
\item $(s \resL t) \resL u = s \resL (t \star u)$. \hfill \leanlink{SlipFace.lres_assoc}
\item $s \resR (t \resR u) = (s \star t) \resR u$. \hfill \leanlink{SlipFace.rres_assoc}
\end{enumerate}
\end{lemma}

\begin{proof}
Part (1) follows by observing that for all $a,b \in \ZZ$,
$$s \star (t \star u)(a,b) = \min_{\ell_1,\ell_2 \in \ZZ} \Big[ s(a,\ell_1) + t(\ell_1,\ell_2) + u(\ell_2,b) \Big] = (s \star t) \star u(a,b).$$
Parts (2) and (3) follow from combining part (1) with the characterization of $\resL, \resR$ in Lemma \ref{lem:sfOpChar}. For part (2), note that $(s \resL t) \resL u$ is the minimum $v \in \slip$ such that $v \star u^\vee \geq s \resL t$, which is the minimum such that $(v \star u^\vee) \star t^\vee \geq s$. By associativity of $\star$ and the equation $u^\vee \star t^\vee = (t \star u)^\vee$, this is equivalent to $v \star (t \star u)^\vee \geq s$, so $v = s \resL (t \star u)$. An analogous argument proves part (3).
\end{proof}

\subsection{Example computations}
\label{ssec:slipSpecial}

We now illustrate the definitions of $\star$ and $\resL$ in two particularly simple cases: when one permutation is increasing, or a disjoint product, possibly infinite, of adjacent transpositions. In these and other computations, it is useful to shrink the set on which the extremum defining $\star, \resL,$ or $\resR$ is computed. The following lemma provides one way to do so (by no means the only way). For convenience this lemma is stated only for $\star$ and $\resL$, but it can be applied to $\resR$ as well using the identity $s \resR t = (t^\vee \resL s^\vee)^\vee$.

\begin{lemma}
\label{lem:setL}
Fix $s, t \in \slip$ and $a,b \in \ZZ$. Define a set \hfill\leanlink{SlipFace.bend_set}
$$L = \{ \ell \in \ZZ: t(\ell-1,b) = t(\ell,b) < t(\ell+1,b) \}.$$
Then the minimum defining $s \star t(a,b)$, and the maximum defining $s \resL t(a,b)$, each occur for some $\ell \in L$. That is, $\displaystyle s \star t (a,b) = \min_{\ell \in L} \Big[ s(a,\ell) + t(\ell,b) \Big]$, and $s \resL t(a,b)  =\ds  \max_{\ell \in L} \Big[ s(a,\ell) - t^\vee(b,\ell) \Big]$.
\hfill\leanbox{\doclink{SlipFace.bend_set_witness}\;\doclink{SlipFace.bend_set_witness_lres}}

The set $L$ is finite.
In the case $t = \sbe$ for some $\beta \in \asp$, it is also equal to
\hfill\leanlink{SlipFace.bend_set_finite}
\leanEquation{}{%
L = \{ \ell \in \ZZ: \beta^{-1}(\ell-1) < b \leq \beta^{-1}(\ell) \}.}{%
\leanlink{AspPerm.bend_set_sf}}
\end{lemma}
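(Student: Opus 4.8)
The plan is to prove the three assertions in order: first that the extrema defining $s \star t(a,b)$ and $s \tll t(a,b)$ can be restricted to $\ell \in L$; then that $L$ is finite; and finally, in the case $t = \sbe$, that $L$ coincides with the set described in terms of $\beta^{-1}$.

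For the restriction claim, I would argue that any $\ell \notin L$ can be replaced by a nearby $\ell$ without increasing $s(a,\ell) + t(\ell,b)$ (for $\star$) or decreasing $s(a,\ell) - t^\vee(b,\ell)$ (for $\tll$). Recall from (S1) that $\ell \mapsto t(\ell,b)$ is nondecreasing with steps of $0$ or $1$, and $\ell \mapsto s(a,\ell)$ is nonincreasing with steps of $0$ or $1$; dually $\ell \mapsto t^\vee(b,\ell)$ is nonincreasing with steps of $0$ or $1$ (this is exactly (D1) applied to $t^\vee$). The set $L$ is precisely the set of positions $\ell$ that are \emph{left endpoints} of a maximal plateau of $\ell \mapsto t(\ell,b)$, i.e. $t(\ell-1,b) < t(\ell,b)$ would fail and $t(\ell+1,b) > t(\ell,b)$; more carefully, $L = \{\ell : t(\ell-1,b) = t(\ell,b) < t(\ell+1,b)\}$ picks the smallest $\ell$ on each value-level that immediately precedes a jump. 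If $\ell \notin L$, then either $t(\ell,b) = t(\ell+1,b)$, in which case replacing $\ell$ by $\ell+1$ leaves $t(\ell,b)$ unchanged and can only decrease $s(a,\ell)$ (and can only increase $t^\vee(b,\ell)$, hence decrease $s(a,\ell) - t^\vee(b,\ell)$ — wait, we need the opposite for $\tll$); or $t(\ell-1,b) < t(\ell,b)$, in which case replacing $\ell$ by $\ell - 1$ decreases $t(\ell,b)$ by exactly $1$ while increasing $s(a,\ell)$ by at most $1$. The main subtlety, and the step I expect to require the most care, is bookkeeping the $\tll$ case: there we are maximizing $s(a,\ell) - t^\vee(b,\ell)$, and using the duality $t^\vee(b,\ell) = t(\ell,b) - \chi_t + b - \ell$, so $s(a,\ell) - t^\vee(b,\ell) = s(a,\ell) - t(\ell,b) + \chi_t - b + \ell$, which is exactly $s(a,\ell) - t(\ell,b)$ up to the additive term $\chi_t - b + \ell$ that is monotonic in $\ell$; one checks that at an $\ell \notin L$ the same local move that helps $\star$ also helps $\tll$, because decreasing $\ell$ by $1$ when $t$ drops changes $-t(\ell,b)$ by $+1$ and $+\ell$ by $-1$, a wash, while $s(a,\ell)$ can only weakly increase — so one must instead move \emph{rightward} across plateaus. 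I would organize this as: for $\star$, push $\ell$ rightward off any plateau interior and leftward off any jump, converging to a left-endpoint-of-plateau-before-a-jump, i.e. an element of $L$; for $\tll$, the symmetric reasoning using the rewritten expression $s(a,\ell) - t(\ell,b) + (\ell - b + \chi_t)$ shows the optimum can likewise be taken at such an $\ell$. (Alternatively, and more cleanly, I may deduce the $\tll$ statement from the $\star$ statement via Lemma \ref{lem:sfOpChar} and duality, avoiding a second case analysis entirely.)

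Finiteness of $L$ is immediate from (S3): for $\ell \ll 0$ we have $t(\ell,b) = \max\{0,\chi_t + \ell - b\}$, which for sufficiently negative $\ell$ is identically $0$, so there is no jump and no membership in $L$; for $\ell \gg 0$, $t(\ell,b) = \chi_t + \ell - b$ has a jump at every step, but then $t(\ell-1,b) = t(\ell,b)$ fails, so again $\ell \notin L$. Hence $L$ is contained in a bounded interval, and being a subset of $\ZZ$, it is finite.

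For the last claim, specialize $t = \sbe$. By Equation \eqref{eq:b+1} applied with the roles of the variables as in the definition of $\sbe$, the jump $\sbe(\ell+1,b) - \sbe(\ell,b)$ equals $\delta(\beta^{-1}(\ell) \geq b)$ — more precisely I will use Equation \eqref{eq:a+1}, which states $s_\beta(\ell+1,b) - s_\beta(\ell,b) = \delta(\beta^{-1}(\ell) \geq b)$. Therefore $\ell \in L$ iff $\sbe(\ell-1,b) = \sbe(\ell,b)$ and $\sbe(\ell,b) < \sbe(\ell+1,b)$, i.e. iff $\delta(\beta^{-1}(\ell-1) \geq b) = 0$ and $\delta(\beta^{-1}(\ell) \geq b) = 1$, i.e. iff $\beta^{-1}(\ell-1) < b \leq \beta^{-1}(\ell)$, which is exactly the asserted description. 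This finishes the proof.
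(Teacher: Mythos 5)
Your handling of the $\star$ case, the finiteness of $L$, and the $t = \sbe$ case is correct and matches the paper's argument in substance: both proofs hinge on the same two observations from (S1), namely that when $t(\ell-1,b) < t(\ell,b)$ the objective weakly improves under $\ell \mapsto \ell-1$, and when $t(\ell,b) = t(\ell+1,b)$ it weakly improves under $\ell \mapsto \ell+1$. The paper packages these extremally — take the largest $\ell$ in the argmin set for which $t(\ell,b)$ is minimal and observe it must lie in $L$ — which neatly sidesteps the termination argument that your iterative ``push'' procedure leaves implicit but would need to supply. (Also a small slip: $L$ is the set of \emph{right} endpoints of plateaus of $\ell \mapsto t(\ell,b)$ having length at least two, not left endpoints.)

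The $\tll$ portion, however, contains a sign error that renders the written argument incoherent as it stands, even though the conclusion it asserts is correct. You claim that replacing $\ell$ by $\ell+1$ ``can only increase $t^\vee(b,\ell)$''; in fact $t^\vee(b,\ell)$ is nonincreasing in $\ell$ (this is (D1) applied to $t^\vee$, which you cite yourself a line earlier), so it can only decrease. That false premise generates the parenthetical ``wait, we need the opposite'' alarm, and then — after correctly computing via the rewrite $s(a,\ell) - t(\ell,b) + \chi_t - b + \ell$ that a leftward move across a jump weakly helps — you conclude ``so one must instead move rightward across plateaus,'' which contradicts the calculation you just did. The clean fix is exactly the paper's: observe that $t^\vee(b,\ell+1) < t^\vee(b,\ell)$ holds if and only if $t(\ell+1,b) = t(\ell,b)$, so the two monotonicity facts used for $\star$ transfer verbatim to the argmax set for $\tll$, and the extraction of an element of $L$ is word-for-word identical. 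Separately, the offered ``cleaner'' alternative — deducing the $\tll$ case from the $\star$ case via Lemma \ref{lem:sfOpChar} and duality — is not developed, and I do not see that it works: that lemma characterizes $\tll$ as a Bruhat minimum, which says nothing directly about where the maximum in Definition \ref{defn:sfAlgebra} is attained, so you should not rely on it without actually carrying it through.
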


\begin{proof}
The finiteness of $L$ follows from criterion (S3), and the alternative formula in the case $t = \sbe$ follows from Equation \eqref{eq:a+1}.

First consider $s \star t$. 
Let $M$ be the set of integers $\ell$ where the minimum occurs, i.e. 
$$M = \left\{ \ell \in \ZZ: s \star t (a,b) = s(a,\ell) + t(\ell,b) \right\}.$$
It suffices to show that $L \cap M$ is nonempty.
Criterion (S1) implies the following two facts.
\begin{enumerate}
\item If $\ell \in M$ and $t(\ell-1,b) < t(\ell,b)$, then $\ell-1 \in M$.
\item If $\ell \in M$ and $t(\ell+1,b) = t(\ell,b)$, then $\ell+1 \in M$.
\end{enumerate}
Let $M_1 \subset M$ denote the subset on which $t(\ell,b)$ is minimized among all elements of $M$. This is bounded above since $t(\ell,b)$ tends to infinity as $\ell$ grows. Let $\ell$ be the maximum element of $M_1$. Then properties (1) and (2) above imply that $\ell \in L$. So $L$ and $M$ have nonempty intersection.

Now consider $\resL$, and redefine $M = \{\ell \in \ZZ: s \resL t(a,b) = s(a,\ell) - t^\vee(b,\ell) \}$. Using the fact that $t^\vee(b,\ell+1) < t^\vee(b,\ell)$ if and only if $t(\ell+1,b) = t(\ell,b)$, it follows that properties (1) and (2) above hold exactly as stated for this set $M$, and the same argument shows that $M$ intersects $L$.
\end{proof}

\begin{lemma}
\label{lem:starShift}
Let $s \in \slip$ and $n \in \ZZ$. For all $a,b \in \ZZ$,
$$
s \star \si{n}(a,b) = s \resL \si{n}(a,b) = s(a,b-n).
$$
in particular, $\si{0}$ is the identity element for $\star$ on $\slip$.
\end{lemma}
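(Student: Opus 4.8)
The plan is to make the slipface $\si{n}$ completely explicit and then evaluate both extrema by collapsing the index set to a single point via Lemma \ref{lem:setL}. Since $\iota_n(m) = m - n$, the definition of the slipface function gives
$$\si{n}(a,b) = \#\{m \geq b:\ m - n < a\} = \max\{0,\ a + n - b\};$$
and since $\iota_n^{-1} = \iota_{-n}$, Example \ref{eg:sai} gives $\si{n}^\vee = \si{-n}$, so $\si{n}^\vee(a,b) = \max\{0,\ a - n - b\}$. In particular $\si{n}(b-n,\,b) = 0$ and $\si{n}^\vee(b,\,b-n) = \si{-n}(b,\,b-n) = 0$; these two vanishing values are what will make the right-hand side come out clean.

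Next I would apply Lemma \ref{lem:setL} with $t = \si{n} = s_{\iota_n}$. Using that lemma's alternative description of $L$ in the case $t = \sbe$ with $\beta = \iota_n$ (so $\beta^{-1}(m) = m+n$), the set $L = \{\ell \in \ZZ:\ \beta^{-1}(\ell-1) < b \leq \beta^{-1}(\ell)\} = \{\ell:\ \ell - 1 + n < b \leq \ell + n\}$ consists of the single integer $\ell = b - n$. Hence Lemma \ref{lem:setL} yields
$$s \star \si{n}(a,b) = s(a,b-n) + \si{n}(b-n,b) = s(a,b-n), \qquad s \tll \si{n}(a,b) = s(a,b-n) - \si{n}^\vee(b,b-n) = s(a,b-n),$$
using the vanishing values above. (If one prefers to avoid Lemma \ref{lem:setL}, the same conclusion follows directly from (S1): for $\ell \leq b-n$ the quantity $s(a,\ell) + \si{n}(\ell,b) = s(a,\ell)$ is nonincreasing in $\ell$, while for $\ell \geq b-n$ it equals $s(a,\ell) + \ell + n - b$, which is nondecreasing in $\ell$ because $s(a,\ell) - s(a,\ell+1) \leq 1$; so both pieces are minimized at $\ell = b-n$, and the analogous bookkeeping with $t^\vee$ handles $\tll$.)

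Finally, for the ``in particular'' claim I would observe that $s \star \si{0}(a,b) = s(a,b)$ is the case $n = 0$ of what was just proved, so $\si{0}$ is a right identity for $\star$ on $\slip$; and since $\iota_0$ is the identity permutation we have $\si{0}^\vee = \si{0}$, so Proposition \ref{prop:sfAlgebraDefined} gives $(\si{0} \star s)^\vee = s^\vee \star \si{0}^\vee = s^\vee \star \si{0} = s^\vee$, whence $\si{0} \star s = s$ because $\vee$ is an involution.

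There is no genuine obstacle here: the content is bookkeeping with the piecewise formula for $\si{n}$ together with the inequalities (S1). The only points requiring care are pinning down the boundary of $L$ so that it is exactly one integer (not two), and remembering to invoke duality to upgrade $\si{0}$ from a right identity to a two-sided identity.
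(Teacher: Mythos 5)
Your proof is correct and follows essentially the same route as the paper's: make $\si{n}(a,b) = \max\{0, n+a-b\}$ explicit, apply Lemma \ref{lem:setL} to see $L = \{b-n\}$, observe that $\si{n}(b-n,b) = \si{n}^\vee(b,b-n) = 0$, and finally upgrade $\si{0}$ from a right identity to a two-sided identity via the duality $(s \star t)^\vee = t^\vee \star s^\vee$. The alternative direct argument you sketch (splitting the index range at $\ell = b-n$ and using (S1) monotonicity) is a fine elementary substitute for Lemma \ref{lem:setL}, though the paper does not bother with it.
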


\begin{proof}
Let $t = \si{n}$ in Lemma \ref{lem:setL}. Then $t(a,b) = \max \{0, n + a - b\}$ for all $a,b \in \ZZ$, so $L = \{b-n\}$.
Therefore $s \star \si{n}(a,b) = s(a,b-n) + \si{n}(b-n,b)$ and $s \resL \si{n}(a,b) = s(a,b-n) - \si{n}^\vee(b,b-n)$. Using $\si{n}^\vee = \si{-n}$, both $\si{n}(b-n,b)$ and $\si{n}^\vee(b,b-n)$ are zero, and the displayed equation follows. So $s \star \si{0} = s$; it also follows that $\si{0} \star t = (t^\vee \star \si{0})^\vee = (t^\vee)^\vee = t$ for all $t \in \slip$, so $\si{0}$ is both a left-identity and a right-identity for $\star$.
\end{proof}

\begin{cor}
\label{cor:stGeqS}
If $s,t \in \slip$ and $\chi_t \geq 0$, then $s \star t \geq s$ and $s \resL t \leq s.$
\end{cor}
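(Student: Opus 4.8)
The plan is to obtain both inequalities from Lemma \ref{lem:starShift} (which makes $\si{0}$ the identity for $\star$) together with the monotonicity recorded in Lemma \ref{lem:compatLeq}, after observing that $\si{0}$ is the Bruhat-minimum of the slipfaces of nonnegative shift.

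For the inequality $s \star t \geq s$: since $\chi_t \geq 0$, property (S2) applied to $t$ gives $t(a,b) \geq \max\{0,\chi_t+a-b\} \geq \max\{0,a-b\} = \si{0}(a,b)$ for all $a,b$, that is, $t \geq \si{0}$ in Bruhat order. Because $\star$ is nondecreasing in its right-hand argument (Lemma \ref{lem:compatLeq}) and $s \star \si{0} = s$ (Lemma \ref{lem:starShift}), it follows that $s \star t \geq s \star \si{0} = s$.

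For the inequality $s \tll t \leq s$ I would pass to the dual slipface and reuse the inequality just proved. By Lemma \ref{lem:sfOpChar} (recall $(t^\vee)^\vee = t$), $s \tll t = \min\{\, v \in \slip : v \star t^\vee \geq s\,\}$, so it suffices to check that $s$ itself lies in the set being minimized over, i.e., that $s \star t^\vee \geq s$. Now $t^\vee$ has shift $\chi_{t^\vee} = -\chi_t$, so $s \star t^\vee \geq s$ is exactly the inequality established above, applied to $t^\vee$; it holds precisely when $-\chi_t \geq 0$, and then $s \tll t \leq s$. Thus this half of the corollary is the one governed by the hypothesis $\chi_t \leq 0$, which is moreover sharp: already $\si{0} \tll \si{1} = \si{1}$ by Lemma \ref{lem:starShift}, and $\si{1}$ is strictly Bruhat-larger than $\si{0}$.

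I do not foresee any genuine obstacle: each part is a two-line deduction from results already established in Section \ref{sec:slipface}. The only point requiring care is the sign of the shift — recasting $s \tll t \leq s$ as the assertion that $s$ belongs to $\{\, v : v \star t^\vee \geq s\,\}$ exposes both the mechanism of the proof and the fact that the two halves of the corollary carry opposite shift hypotheses.
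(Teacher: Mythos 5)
Your first inequality is exactly the paper's argument: $\chi_t \geq 0$ and (S2) give $t \geq \si{0}$, and then the monotonicity of Lemma \ref{lem:compatLeq} together with $s \star \si{0} = s$ (Lemma \ref{lem:starShift}) yields $s \star t \geq s \star \si{0} = s$. Nothing to add there.

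Your diagnosis of the second inequality is also correct, and you have in fact caught a misstatement. As literally written, $s \tll t \leq s$ fails for $\chi_t \geq 0$: your counterexample $\si{0} \tll \si{1} = \si{1} \not\leq \si{0}$ is valid, and more generally Lemma \ref{lem:starShift} gives $s \tll \si{n}(a,b) = s(a,b-n) \geq s(a,b)$ for $n \geq 0$. The paper's own proof does not establish the displayed claim either: it shows $s \tll t^\vee \leq s \tll \si{0}^\vee = s$, i.e.\ the inequality $s \tll u \leq s$ for $\chi_u \leq 0$, which is precisely the corrected form you prove; the error is in the statement of the corollary, not in your argument. The only difference in route is minor: the paper deduces the corrected inequality from the clause of Lemma \ref{lem:compatLeq} saying that $s \tll t^\vee$ is nonincreasing in $t$, together with $s \tll \si{0} = s$, whereas you use the minimum characterization of Lemma \ref{lem:sfOpChar}, checking that $s$ itself lies in $\{ v \in \slip : v \star t^\vee \geq s \}$ by means of the first inequality. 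Both are two-line deductions of equal strength; yours has the merit of making the sign of the shift hypothesis impossible to overlook. One small remark: your phrase ``holds precisely when $-\chi_t \geq 0$'' asserts more than the proof needs (sufficiency is enough), though it happens to be true, since $\chi_{s \star t^\vee} = \chi_s - \chi_t < \chi_s$ forces $s \star t^\vee(a,b) < s(a,b)$ for $a - b \gg 0$ by (S3).
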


\begin{proof}
Lemma \ref{lem:compatLeq} implies that $s \star t \geq s \star \si{0} = s$ and $s \resL t^\vee \leq s \resL \si{0}^\vee = s$.
\end{proof}

\begin{defn}
\label{defn:simpleInv}
Let $S \subseteq \ZZ$ be a set of integers that contains no two consecutive integers. Let $\sigma_S: \ZZ \to \ZZ$ be the permutation exchanging $n$ and $n+1$ for all $n \in S$, and fixing all other integers.
In other words, $\sigma_S$ is the (possibly infinite) product of the simple transpositions $\{ \sigma_n: n \in S \}$.
\hfill\leanlink{Transpositions.sigma}
\end{defn}

These include the simple transpositions $\sigma_n = \sigma_{\{n\}}$ used to generate symmetric groups, as well as the involutions $\tsig_n = \sigma_{n + k \ZZ}$ used to generate the affine symmetric group $\ts_k$.

\newcommand{\sss}{s_{\sigma_S}}
\begin{lemma}
\label{lem:starTrans}
Let $S \subset \ZZ$ be a set containing no two consecutive integers, and let $t = \sss$ be the slipface associated to the permutation defined above. Let $s$ be any other slipface. For all $a,b \in \ZZ$,
\hfill\leanbox{\doclink{Transpositions.sf_star_sigma}\;\doclink{Transpositions.sf_lres_sigma}}
\begin{eqnarray*}
s \star \sss(a,b) &=& s(a,b) + \delta\Big[ b-1 \in S \mbox{ and } s(a,b-1) > s(a,b) = s(a,b+1)\Big],
\mbox{ and }\\
s \resL \sss(a,b) &=& s(a,b) - \delta\Big[ b-1 \in S \mbox{ and } s(a,b-1) = s(a,b) > s(a,b+1)\Big].
\end{eqnarray*}
In the special case where $s = \sa$ for some $\alpha \in \asp$,
\begin{eqnarray*}
\sa \star \sss(a,b) &=& \sa(a,b) + \delta\Big[ b-1 \in S \mbox{ and } \alpha(b-1) < a \leq \alpha(b)\Big],
\mbox{ and}\\
\sa \resL \sss(a,b) &=& \sa(a,b) - \delta\Big[ b-1 \in S \mbox{ and } \alpha(b) < a \leq \alpha(b-1)\Big].
\end{eqnarray*}
\end{lemma}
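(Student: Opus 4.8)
The plan is to reduce everything to Lemma~\ref{lem:setL} applied with $t = \sss$. The first step is to evaluate $t$ and its dual on the few arguments that matter. Since $\sigma_S$ is a product of disjoint transpositions it is an involution, so $t^\vee = s_{\sigma_S^{-1}} = s_{\sigma_S} = t$ by Example~\ref{eg:sai}, and $|\sigma_S(n)-n| \le 1$ for all $n$. Comparing $t(a,b) = \#\{n \ge b : \sigma_S(n) < a\}$ with the identity slipface $\si{0}(a,b) = \#\{n \ge b : n < a\} = \max\{0, a-b\}$, and grouping the integers $n \ge b$ into fixed points of $\sigma_S$ and swapped pairs $\{m,m+1\}$ with $m \in S$, one checks that each swapped pair contributes equally to both counts \emph{except} when $m = b-1$, where only $n=b$ survives and contributes $\delta(b-1<a)$ to $t$ but $\delta(b<a)$ to $\si{0}$. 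As at most one $m \in S$ equals $b-1$, this yields
$$ t(a,b) \;=\; \max\{0,\,a-b\} \;+\; \delta(b-1 \in S)\,\delta(a=b). $$

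Next I would determine the set $L$ of Lemma~\ref{lem:setL}. Using its alternative description $L = \{\ell : \sigma_S(\ell-1) < b \le \sigma_S(\ell)\}$ (valid here since $\sigma_S^{-1}=\sigma_S$), the bound $|\sigma_S(n)-n| \le 1$, and the fact that $b-1 \in S$ forces $b, b-2 \notin S$, a quick inspection of the candidates $\ell \in \{b-2, b-1, b, b+1, b+2\}$ gives: $L = \{b\}$ if $b-1 \notin S$, and $L = \{b-1,\,b+1\}$ if $b-1 \in S$.

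The computation then finishes immediately. If $b-1 \notin S$, then $L = \{b\}$ and $t(b,b) = t^\vee(b,b) = 0$, so Lemma~\ref{lem:setL} gives $s \star t(a,b) = s \tll t(a,b) = s(a,b)$, matching the claim since both indicators vanish. If $b-1 \in S$, then the displayed formula gives $t(b-1,b) = 0$, $t(b+1,b) = 1$, $t^\vee(b,b-1) = 1$, $t^\vee(b,b+1) = 0$, hence
$$ s \star t(a,b) = \min\{\,s(a,b-1),\ s(a,b+1)+1\,\}, \qquad s \tll t(a,b) = \max\{\,s(a,b-1)-1,\ s(a,b+1)\,\}. $$
I would conclude by a four-case analysis on the finite differences $s(a,b-1)-s(a,b)$ and $s(a,b)-s(a,b+1)$, each lying in $\{0,1\}$ by (S1): in every case the minimum above equals $s(a,b) + \delta[\,s(a,b-1) > s(a,b) = s(a,b+1)\,]$ and the maximum equals $s(a,b) - \delta[\,s(a,b-1) = s(a,b) > s(a,b+1)\,]$.

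For the specialization $s = \sa$, Equation~\eqref{eq:b+1} says $\sa(a,b) - \sa(a,b+1) = \delta(\alpha(b)<a)$, so ``$s(a,b-1) > s(a,b) = s(a,b+1)$'' translates to $\alpha(b-1) < a \le \alpha(b)$ and ``$s(a,b-1) = s(a,b) > s(a,b+1)$'' translates to $\alpha(b) < a \le \alpha(b-1)$; substituting gives the last two formulas. I do not expect a genuine obstacle: the only delicate point is the bookkeeping behind the set $L$, where ``no two consecutive elements of $S$'' is exactly what eliminates the stray candidates $\ell = b$ and $\ell = b \pm 2$, along with keeping the conventions for $t$ and $t^\vee$ straight.
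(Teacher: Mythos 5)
Your proof is correct and follows essentially the same route as the paper's: reduce via Lemma~\ref{lem:setL} with $t = \sss$, compute $L$ by cases on whether $b-1 \in S$, evaluate $t$ and $t^\vee$ at the two relevant arguments, and finish with the (S1) case analysis, then specialize to $s = \sa$ via Equation~\eqref{eq:b+1}. The only difference is that you derive a closed formula for $\sss(a,b)$ along the way, which the paper does not bother to state but reads off the same values directly.
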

\begin{proof}
The second two equations, in the special case $s = \sa$, follow from the general case and Equation \eqref{eq:b+1}, so we focus on the general case.
Fix $a,b \in \ZZ$. Note that $\sigma_S^{-1} = \sigma_S$ and thus $\sss^\vee = \sss$. The set $L$ from Lemma \ref{lem:setL} is
$$L = \{ \ell \in \ZZ: \sigma_S(\ell-1) < b \leq \sigma_S(\ell) \} 
= \begin{cases}
\{b\} & \mbox{ if } b-1 \not\in S,\\
\{b-1, b+1\} & \mbox{ if } b-1 \in S.
\end{cases}$$
If $b-1 \not\in S$, then $\sss(b,b) = 0$, so $s \star \sss(a,b) = s \resL \sss(a,b) = s(a,b)$, as desired. On the other hand, if $b-1 \in S$, then $\sss(b-1,b) = \sss^\vee(b,b+1) = 0$ and $\sss(b+1,b) = \sss^\vee(b,b-1) = 1$, so $s \star \sss(a,b) = \min\{ s(a,b-1), s(a,b+1)+1\}$, and $s \resL \sss(a,b) = \max\{ s(a,b-1)-1, s(a,b+1)\}$. The result now follows from criterion (S1).
\end{proof}

\begin{cor}
\label{cor:sasss}
If $\alpha \in \asp$ has $\alpha(n) > \alpha(n+1)$ for all $n \in S$, then $\sa \star \sss = \sa$. In particular, $\sss \star \sss = \sss$. On the other hand, if $\alpha(n) < \alpha(n+1)$ for all $n \in S$, then $\sa \resL \sss = \sa$.
\end{cor}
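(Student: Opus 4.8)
The plan is to read off all three assertions directly from the special-case formulas in Lemma \ref{lem:starTrans}, since each reduces to checking when a certain indicator vanishes identically in $a$ and $b$.

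First I would handle $\sa \star \sss = \sa$. By Lemma \ref{lem:starTrans}, $\sa \star \sss(a,b) = \sa(a,b) + \delta[\,b-1 \in S \text{ and } \alpha(b-1) < a \leq \alpha(b)\,]$. When $b-1 \in S$, the hypothesis $\alpha(n) > \alpha(n+1)$ for $n \in S$ gives $\alpha(b-1) > \alpha(b)$, so there is no integer $a$ with $\alpha(b-1) < a \leq \alpha(b)$; hence the indicator is $0$ for every $(a,b)$, and $\sa \star \sss = \sa$. The $\tll$ statement is symmetric: Lemma \ref{lem:starTrans} gives $\sa \tll \sss(a,b) = \sa(a,b) - \delta[\,b-1 \in S \text{ and } \alpha(b) < a \leq \alpha(b-1)\,]$, and under $\alpha(n) < \alpha(n+1)$ for $n \in S$ we get $\alpha(b) < \alpha(b-1)$ is violated whenever $b-1 \in S$, so again the indicator vanishes identically and $\sa \tll \sss = \sa$.

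Finally, for the ``in particular'' claim $\sss \star \sss = \sss$, I would simply verify that $\alpha = \sigma_S$ satisfies the hypothesis of the first part: if $n \in S$ then $n+1 \notin S$ (as $S$ contains no two consecutive integers), so $\sigma_S(n) = n+1 > n = \sigma_S(n+1)$. The first part then applies with $\alpha = \sigma_S$.

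There is no real obstacle here; the only point requiring a moment's care is the last one, namely recalling that $\sigma_S$ is a genuine descent at each $n \in S$, which is immediate from the ``no two consecutive integers'' condition built into Definition \ref{defn:simpleInv}.
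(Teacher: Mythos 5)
Your proof is correct and is exactly the intended reading: the paper states this as a corollary of Lemma \ref{lem:starTrans} with no written proof, and your direct check that the indicator in each formula vanishes identically under the stated monotonicity hypothesis, plus the observation that $\sigma_S$ has a descent at every $n \in S$ because $S$ contains no consecutive integers, is the argument the paper has in mind.
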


A more general formula for $\alpha \star \sigma_S$ is proved in Theorem \ref{thm:alphaStarSigma}.

\section{\texorpdfstring{The image of $\asp$ in $\slip$: submodular slipfaces}{The image of asp in slip: submodular slipfaces}}
\label{sec:submodular}

The map $\alpha \mapsto \sa$ embeds $\asp \hookrightarrow \slip$. We demonstrate in this section that the image is closed under the operations $\star, \resL, \resR$, and thereby obtain corresponding operations on $\asp$. A key tool is a characterization of the image of this inclusion as the set of \emph{submodular} slipfaces.

The following shorthand will prove useful. For $s \in \slip$, define $\Delta s: \ZZ^2 \to \{-1,0,1\}$ by
$$\Delta s(a,b) = s(a+1,b) - s(a,b) - s(a+1,b+1) + s(a,b+1).$$
The fact that $\Delta s(a,b) \in \{-1,0,1\}$ follows from Criterion (S1). The function $s$ can be reconstructed from $\Delta s$. Observe that for $a,b$ fixed, there are only finitely many $(a',b') \in \ZZ$ such that $a' \leq a, b' \geq b$ and $s(a',b') > 0$. From this and a telescoping sum, it follows that
\begin{equation}
\label{eq:sumDelta}
s(a,b) = \sum_{ \substack{a' < a\\b' \geq b} } \Delta s(a',b').
\end{equation}
Here and throughout this section, an equation involving a sum of infinitely many integers is understood to mean implicitly that only finitely many terms are nonzero; equivalently, the sum converges absolutely.
The definition of $s^\vee$ and some cancellation shows that 
\begin{equation}
\label{eq:DeltasDual}
\Delta s^\vee(a,b) = \Delta s(b,a),
\end{equation}
and then Equation \eqref{eq:sumDelta} implies that
\begin{equation}
\label{eq:sumDeltaDual}
s^\vee(b,a) = \sum_{ \substack{a' \geq a\\b' < b} } \Delta s(a',b').
\end{equation}
Therefore any slipface $s$ can be reconstructed from the knowledge of the points $(a,b)$ where $\Delta s(a,b) =1$ and the points where $\Delta s(a,b) = -1$. These two sets are fairly constrained, however. Since $\displaystyle \lim_{b \to \infty} \left( s(a+1,b) - s(a,b) \right) = 1$, and $\displaystyle \lim_{a \to \infty} \left( s^\vee(b+1,a) - s^\vee(b,a) \right) = 1$, we deduce

\begin{lemma}
\label{lem:DeltaBalance}
For fixed $a \in \ZZ$, $\sum_{n \in \ZZ} \Delta s(a,n) = 1$. For fixed $b \in \ZZ$, $\sum_{n \in \ZZ} \Delta s(n,b) = 1$.
\end{lemma}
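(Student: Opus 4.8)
The plan is to evaluate each sum as a telescoping sum, using only Criteria (S1) and (S3) from Definition~\ref{defn:slipface}, together with the identity $\Delta s(a,b) = [\,s(a+1,b)-s(a,b)\,] - [\,s(a+1,b+1)-s(a,b+1)\,]$ that is immediate from the definition of $\Delta s$. For the first claim, fix $a \in \ZZ$ and set $g(n) = s(a+1,n) - s(a,n)$, so that $\Delta s(a,n) = g(n) - g(n+1)$. Criterion (S1) already gives $g(n) \in \{0,1\}$, but the key input is Criterion (S3), which forces $g$ to be eventually constant at each end: for $n \gg 0$ one has $s(a+1,n) = s(a,n) = 0$, hence $g(n) = 0$; for $n \ll 0$ one has $s(a+1,n) = \chi_s + a + 1 - n$ and $s(a,n) = \chi_s + a - n$, hence $g(n) = 1$.

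It follows that $\Delta s(a,n) = g(n) - g(n+1)$ vanishes for all but finitely many $n$, so $\sum_{n \in \ZZ} \Delta s(a,n)$ is genuinely a finite sum (consistent with the absolute-convergence convention in force in this section), and for $N \ll 0 \ll M$ the partial sum telescopes to $\sum_{n=N}^{M} \Delta s(a,n) = g(N) - g(M+1) = 1 - 0 = 1$. This proves the first assertion. For the second, fix $b \in \ZZ$; the cleanest route is to invoke Equation~\eqref{eq:DeltasDual}, $\Delta s(n,b) = \Delta s^\vee(b,n)$: since $s^\vee \in \slip_{-\chi_s}$ is again a slipface, the first assertion applied to $s^\vee$ with its first coordinate fixed at $b$ yields $\sum_{n \in \ZZ} \Delta s^\vee(b,n) = 1$, hence $\sum_{n \in \ZZ}\Delta s(n,b) = 1$. (Alternatively one repeats the telescoping argument verbatim with $h(n) = s(n,b) - s(n,b+1)$, so that $\Delta s(n,b) = h(n+1) - h(n)$ with $h(n) = 0$ for $n \ll 0$ and $h(n) = 1$ for $n \gg 0$.) This substantiates the ``we deduce'' preceding the statement.

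The argument is entirely routine. The only points needing care are keeping straight which end of the range $n \to \pm\infty$ contributes the value $1$ and which contributes $0$ — i.e.\ invoking the correct branch of the $\max$ in the asymptotics (S3) — and checking at the outset that only finitely many summands are nonzero, so that the rearrangement implicit in ``telescoping over all of $\ZZ$'' is legitimate. I do not anticipate any genuine obstacle.
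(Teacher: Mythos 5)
Your proof is correct and carries out precisely the telescoping argument that the paper compresses into the single sentence preceding the lemma (and your two alternatives for the second sum---invoking duality via Equation~\eqref{eq:DeltasDual}, or telescoping $h(n)=s(n,b)-s(n,b+1)$ directly---mirror the paper's implicit reliance on $s^\vee$). One point worth flagging: your asymptotics go the right way, namely $g(n)=s(a+1,n)-s(a,n)$ equals $1$ for $n\ll 0$ and $0$ for $n\gg 0$, whereas the paper's preamble states $\lim_{b\to\infty}\bigl(s(a+1,b)-s(a,b)\bigr)=1$ and $\lim_{a\to\infty}\bigl(s^\vee(b+1,a)-s^\vee(b,a)\bigr)=1$, both of which should be limits as the variable tends to $-\infty$ in order for the telescope to yield $1$.
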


\begin{defn}
\label{defn:submodular}
A slipface $s$ is \emph{submodular} if $\Delta s(a,b) \geq 0$ for all $a,b \in \ZZ$.
\hfill\leanlink{SlipFace.submodular}
\end{defn}

\begin{prop}
\label{prop:imageASP}
A slipface $s$ is submodular if and only if there exists $\alpha \in \asp$ such that $s = \sa$. Therefore $\alpha \mapsto \sa$ gives a bijection from $\asp$ to the set of submodular slipfaces.
\hfill\leanlink{Submodular.submodular_iff_asp}
\end{prop}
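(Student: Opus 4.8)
The plan is to prove both directions of the equivalence. For the forward direction, suppose $s$ is a submodular slipface of shift $\chi$; I want to produce $\alpha \in \asp$ with $s = \sa$. By Lemma \ref{lem:DeltaBalance}, for each fixed $b \in \ZZ$ the function $a \mapsto \Delta s(a,b)$ is nonnegative and sums to $1$, so there is exactly one integer $a$ with $\Delta s(a,b) = 1$; define $\alpha(b)$ to be that integer. Symmetrically, using Lemma \ref{lem:DeltaBalance} with the roles reversed, for each fixed $a$ there is a unique $b$ with $\Delta s(a,b) = 1$; this says exactly that $\alpha$ is a bijection $\ZZ \to \ZZ$, with $\alpha^{-1}(a)$ equal to that unique $b$. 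So $\Delta s(a,b) = \delta(\alpha(b) = a)$, which is Equation \eqref{eq:Deltasa}. Then Equation \eqref{eq:sumDelta} gives $s(a,b) = \sum_{a' < a,\, b' \geq b} \delta(\alpha(b') = a') = \#\{b' \geq b : \alpha(b') < a\} = \sa(a,b)$. It remains to check $\alpha \in \asp$, i.e.\ that only finitely many $n$ have exactly one of $n, \alpha(n)$ negative: from (S3), $s(a,b) = \max\{0, \chi + a - b\}$ for all but finitely many $b$ (with $a$ fixed), and using Equation \eqref{eq:b+1} in the form $\delta(\alpha(b) < a) = s(a,b) - s(a,b+1)$, one sees $\alpha(b) < a \iff \chi + a - b > 0$ for $b$ outside a finite set, which pins down $\alpha(b) = b - \chi$ outside a finite set (take $a = b - \chi$ and $a = b - \chi + 1$); hence all but finitely many $n$ have $\alpha(n) = n - \chi$, and since $\sgn(n) = \sgn(n-\chi)$ for $|n|$ large, $\alpha$ is almost-sign-preserving.

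For the reverse direction, let $\alpha \in \asp$. By Example \ref{eg:saSlipface}, $\sa \in \slip_{\ca}$, and Equation \eqref{eq:Deltasa} says $\Delta \sa(a,b) = \delta(\alpha(b) = a) \geq 0$, so $\sa$ is submodular. Finally, for the bijection claim: the map $\alpha \mapsto \sa$ is injective because, as noted after Equation \eqref{eq:Deltasa}, $\sa$ determines $\alpha$ (explicitly $\alpha(b)$ is the unique $a$ with $\Delta \sa(a,b) = 1$); and the two directions just proved show the image is exactly the set of submodular slipfaces. Hence $\alpha \mapsto \sa$ is a bijection onto the submodular slipfaces.

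I expect the main obstacle to be the bookkeeping in the forward direction: carefully using both halves of Lemma \ref{lem:DeltaBalance} to get that the map $b \mapsto \alpha(b)$ is well-defined \emph{and} a bijection (rather than merely a function), and then the finiteness argument showing $\alpha \in \asp$ — verifying that the ``almost-sign-preserving'' condition, as opposed to merely ``$\alpha$ agrees with $\iota_{\chi}$ outside a finite set,'' follows from the eventual-behavior axiom (S3). Both are routine but need the finite-support conventions for the sums in Equations \eqref{eq:sumDelta} and \eqref{eq:sumDeltaDual} to be invoked cleanly. Everything else is a direct substitution into the finite-difference identities \eqref{eq:b+1}–\eqref{eq:Deltasa} already recorded in Section \ref{sec:aspPrelim}.
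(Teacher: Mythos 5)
The overall plan matches the paper's proof: construct $\alpha$ from the support of $\Delta s$ using Lemma \ref{lem:DeltaBalance}, recover $s = \sa$ from Equation \eqref{eq:sumDelta}, and note $\sa$ is submodular by Equation \eqref{eq:Deltasa}. However, your verification that $\alpha \in \asp$ contains a genuine error. You claim that $\alpha(n) = n - \chi$ for all but finitely many $n$; this is false in general, since it would say $\alpha$ has finitely many inversions, a strict subclass of $\asp$. For a concrete counterexample, take $\alpha(n) = n+1$ for $n$ even and $\alpha(n) = n-1$ for $n$ odd. Then $\alpha \in \asp$ with $\ca = 0$, so $\sa$ is a submodular slipface, yet $\alpha(n) \neq n$ for every $n$. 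The flaw in the argument is that (S3) provides a finite exceptional set of $b$'s \emph{for each fixed $a$}, but you invoke it with $a = b - \chi$ and $a = b - \chi + 1$ while $b$ varies --- the exceptional set depends on $a$, which depends on $b$, so the quantifiers don't line up.

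The fix is simpler than what you attempted, and is what the paper does: since $s$ and $s^\vee$ are $\NN$-valued by definition, Equations \eqref{eq:sumDelta} and \eqref{eq:sumDeltaDual} give $\sa(a,b) = s(a,b) < \infty$ and $\sai(b,a) = s^\vee(b,a) < \infty$ for all $a,b$. In particular $\sa(0,0)$ and $\sai(0,0)$ are finite, and these two numbers together count exactly the integers $n$ for which $n$ and $\alpha(n)$ have opposite signs, so $\alpha$ is almost-sign-preserving. Everything else in your write-up (the construction of $\alpha$, the recovery $s = \sa$, the reverse direction, and the bijection claim) is correct and follows the paper.
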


\begin{proof}
Equation \eqref{eq:Deltasa} shows that $\sa$ is submodular for all $\alpha \in \asp$. Conversely, suppose that $s \in \slip$ is submodular. Let
$ \Gamma = \{ (a,b): \Delta s(a,b) = 1 \}.$
Lemma \ref{lem:DeltaBalance} implies that $\Gamma$ contains a unique element for each value of $a$, and a unique element for each value of $b$. Therefore there exists a (unique) \emph{permutation} $\alpha: \ZZ \to \ZZ$ such that $(a,b) \in \Gamma$ if and only if $\alpha(b) = a$. Equations \eqref{eq:sumDelta} and \eqref{eq:sumDeltaDual} may be written $s(a,b) = \sa(a,b)$ and $s^\vee(b,a) = \sai(b,a)$; since these are finite, $\alpha \in \asp$.
\end{proof}

\begin{rem} \label{rem:asm}
If we view $s$ and $\Delta s$ as $|\ZZ| \times |\ZZ|$ matrices, then Equation \eqref{eq:sumDelta} shows that each matrix determines the other. When $s$ is submodular, we may view $\Delta s$ a an infinite permutation matrix. On the other hand, for any slipface $s$, $\Delta s$ is an infinite \emph{alternating sign matrix}. This means that the nonzero entries in every row or column alternate between $1$ and $-1$, and each row or column sums to $1$.
Both these properties follow from the observation that the sum of a finite contiguous sequence of entries in a row or column telescopes to $-1,0$, or $1$ by Criterion (S1), and it tends to $1$ as the sequence grows on both sides by Criterion (S3).
Alternating sign matrices have rich combinatorics; see for example the book \cite{BressoudASM} or discussion in \cite{fischerSaikia}.
\end{rem}

\subsection{The Demazure product $\star$ on $\asp$}

This subsection proves the following theorem and develops a few combinatorial properties of $\star$ on $\asp$.

\begin{thm}
\label{thm:starExists1}
The set of submodular slipfaces is closed under $\star$. Therefore there is a well-defined associative product $\star$ on $\asp$, uniquely characterized by $\sab = \sa \star \sbe$, which we call the \emph{Demazure product} on $\asp$.
It is associative, and satisfies $(\alpha \star \beta)^{-1} = \beta^{-1} \star \alpha^{-1}$ and $\chi_{\alpha \star \beta} = \ca + \cb$.
\end{thm}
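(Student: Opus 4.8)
The plan is to reduce Theorem~\ref{thm:starExists1} to the already-established structural results about slipfaces, so that the only genuinely new content is the preservation of submodularity under $\star$. Everything else in the statement follows formally: by Proposition~\ref{prop:imageASP} the map $\alpha \mapsto \sa$ is a bijection onto the submodular slipfaces, and by Proposition~\ref{prop:sfAlgebraDefined} the operation $\star$ on $\slip$ is associative (Lemma~\ref{lem:sfAlgebra}(1)), respects shifts ($\chi_{s\star t} = \chi_s+\chi_t$), and satisfies $(s\star t)^\vee = t^\vee \star s^\vee$. Once we know $\sa \star \sbe$ is again submodular, we may \emph{define} $\alpha \star \beta$ to be the unique permutation in $\asp$ whose slipface is $\sa \star \sbe$; associativity, the shift identity $\chi_{\alpha\star\beta} = \ca + \cb$, and the inversion identity $(\alpha\star\beta)^{-1} = \beta^{-1}\star\alpha^{-1}$ (using $\sa^\vee = \sai$ from Example~\ref{eg:sai}) then transfer immediately from the corresponding slipface facts.

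So the heart of the proof is: if $s,t$ are submodular slipfaces, then $s \star t$ is submodular, i.e. $\Delta(s\star t)(a,b) \geq 0$ for all $a,b$. First I would fix $a,b$ and use Lemma~\ref{lem:setL} to localize the minima: for a fixed reference column, the minimum defining $(s\star t)(\cdot,\cdot)$ at each of the four corners $(a,b)$, $(a+1,b)$, $(a,b+1)$, $(a+1,b+1)$ occurs at some $\ell$ in a small finite controlled set. Concretely, write $\alpha, \beta$ for the permutations with $s = \sa$, $t = \sbe$; by Lemma~\ref{lem:setL} the relevant index set for column $b$ is $L_b = \{\ell : \beta^{-1}(\ell-1) < b \leq \beta^{-1}(\ell)\}$, and for column $b+1$ it is $L_{b+1}$, and these two sets differ only near $\beta(b)$. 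The plan is to pick optimal indices $\ell_0 \in L_b$ achieving $(s\star t)(a,b)$ and $\ell_1 \in L_{b+1}$ achieving $(s\star t)(a+1,b+1)$, and then produce \emph{admissible competitor} indices for the two cross terms $(s\star t)(a+1,b)$ and $(s\star t)(a,b+1)$ — using $\max(\ell_0,\ell_1)$ for one and $\min(\ell_0,\ell_1)$ for the other, or whichever pairing works — so that
\[
(s\star t)(a+1,b) + (s\star t)(a,b+1) \leq (s\star t)(a,b) + (s\star t)(a+1,b+1).
\]
This inequality, which says the function $(a,b)\mapsto (s\star t)(a,b)$ has nonpositive mixed difference in the ``submodular'' sense, is exactly $\Delta(s\star t)(a,b)\geq 0$. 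To push the competitor bound through, I would invoke submodularity of $s$ and $t$ separately: $s(a,\ell) + s(a+1,\ell') \geq s(a,\ell\wedge\ell'?) + \dots$ — more precisely, the two-variable submodularity of $\sa$ in its arguments, which is what $\Delta\sa \geq 0$ encodes (it gives that $\sa(a,b)$ is submodular as a function of the lattice $(a,-b)$, hence $\sa(a,\ell_0) + \sa(a+1,\ell_1) \geq \sa(a, \ell_0\vee\ell_1) + \sa(a+1,\ell_0\wedge\ell_1)$ after a sign bookkeeping), and similarly for $t$.

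The main obstacle will be the combinatorial case analysis in choosing the competitor indices and verifying they lie in the correct index sets $L_b, L_{b+1}$ (equivalently, that the chosen $\ell$ can legitimately be used at the shifted corner). The danger is that the naive choice $\ell_0 \wedge \ell_1$ or $\ell_0 \vee \ell_1$ is optimal for one corner but needs a small adjustment (a shift by $1$ across $\beta(b)$) for the other, and one must check the telescoping of submodularity contributions still closes. A clean way to organize this may be to avoid Lemma~\ref{lem:setL} entirely and instead argue directly from Equation~\eqref{eq:sumDelta}: since $\Delta(s\star t)(a,b)$ is an integer in $\{-1,0,1\}$ by (S1), it suffices to rule out the value $-1$; supposing $\Delta(s\star t)(a,b) = -1$, one derives that all four corner minima are ``tight'' in a way that forces a violation of submodularity of $s$ or of $t$ at a single location. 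I expect the cleanest writeup to fix the minimizing $\ell$ at the corner $(a+1,b+1)$, show it can be taken to also minimize at $(a,b+1)$ and at $(a+1,b)$ with the \emph{same or an adjacent} index, and then the four-term inequality collapses to $\Delta s(a,\ell) + \Delta t(\ell, b) \geq$ (something nonnegative) — at which point submodularity of $s$ and $t$ finishes it.
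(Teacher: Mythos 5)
Your wrapper around the slipface machinery (Proposition~\ref{prop:imageASP}, Proposition~\ref{prop:sfAlgebraDefined}, Lemma~\ref{lem:sfAlgebra}) is exactly how the paper reduces the theorem to the single claim that submodularity is preserved under $\star$, so that part is fine. But your primary plan for that claim does not work, and the inequality you write as the target has the sign reversed: $\Delta(s\star t)(a,b)\geq 0$ means
\[
(s\star t)(a+1,b) + (s\star t)(a,b+1) \ \geq\ (s\star t)(a,b) + (s\star t)(a+1,b+1),
\]
not $\leq$. This is not a cosmetic slip; it signals that the competitor-index/exchange strategy is facing the wrong direction. Plugging trial values of $\ell$ into the definition of $s\star t$ at the two ``cross'' corners gives you \emph{upper} bounds on $(s\star t)(a+1,b)$ and $(s\star t)(a,b+1)$, but the submodularity inequality is a \emph{lower} bound on their sum, so the Monge-style exchange (choose minimizers $\ell_0,\ell_1$ on the diagonal, move them to the anti-diagonal via $\wedge,\vee$) proves the opposite, ``Monge'' direction rather than the ``anti-Monge/submodular'' direction. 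Worse, the anti-Monge property is genuinely \emph{not} preserved by min-plus products of arbitrary integer matrices (small $2\times 2$ counterexamples exist), so any proof must use the unit-step property~(S1) in an essential way, which your exchange argument does not. Your $\vee/\wedge$ inequality for $\sa$ is also stated backwards: iterating $\Delta\sa\geq 0$ gives $\sa(a,\ell_1)+\sa(a+1,\ell_0)\geq\sa(a,\ell_0)+\sa(a+1,\ell_1)$ for $\ell_0<\ell_1$, the opposite of what you wrote.

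Your backup idea --- rule out $\Delta(s\star t)=-1$ by tracking a minimizer across adjacent corners --- is the right instinct, and it is what the paper does, but the missing ingredient is a precise monotonicity statement. The paper introduces $\mab(a,b)$, the \emph{rightmost} $\ell$ achieving the min, and proves (via Lemma~\ref{lem:fg}) that $\mab$ is nondecreasing in both $a$ and $b$, together with the unit-step formulas $\sab(a+1,b)=\sab(a,b)+\delta[\mab(a,b)\leq\alpha^{-1}(a)]$ and $\sab(a,b+1)=\sab(a,b)-\delta[\mab(a,b)>\beta(b)]$ (Lemmas~\ref{lem:Kstara+1}, \ref{lem:Kstarb+1}). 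Using~(S1), it then suffices to show that if the top edge is flat ($\sab(a+1,b)=\sab(a+1,b+1)$, i.e.\ $\mab(a+1,b)\leq\beta(b)$) then so is the bottom edge, which is immediate from $\mab(a,b)\leq\mab(a+1,b)\leq\beta(b)$. You should replace the exchange argument with this monotone-minimizer argument, and in particular you need to single out the rightmost (or leftmost) minimizer --- an arbitrary minimizer need not be monotone.
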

\hfill\leanbox{\doclink{Submodular.submodular_of_star}\;\doclink[Submodular]{AspPerm.star_spec}\;\doclink[Submodular]{AspPerm.star_assoc}\;\doclink[Submodular]{AspPerm.inverse_star}\;\doclink[Submodular]{AspPerm.chi_star}}

\begin{defn}
Let $\alpha, \beta \in \asp$ and $a,b \in \ZZ$. Define 
$$\mab(a,b) = \max \{\ell \in \ZZ: \sa \star \sbe (a,b) = \sa(a,\ell) + \sbe(\ell,b) \}.$$
\end{defn}

We will use the following elementary lemma several times. We omit the straightforward proof.

\begin{lemma}
\label{lem:fg}
For a function $f: \ZZ \to \ZZ$ that is bounded below, denote $\min f = \min \{ f(\ell): \ell \in \ZZ\}$ and let $M_f \in \ZZ \cup \{+\infty\}$ be $M_f = \sup \{ \ell \in \ZZ:\ f(\ell) = \min f \}$. Suppose that $A$ is any integer, and $g: \ZZ \to \ZZ$ is defined by 
$g(\ell) = f(\ell) + \delta(\ell \leq A).$
Then 
$\min g = \min f + \delta( M_f \leq A), \mbox{ and } M_g \geq M_f.$
\hfill\leanlink{Submodular.sediment}
\end{lemma}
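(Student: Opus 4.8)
The plan is to compare the minimum of $g$ with that of $f$ by splitting $\ZZ$ into the two half-lines on which the correction term $\delta(\ell \leq A)$ is constant. On $\{\ell \leq A\}$ we have $g(\ell) = f(\ell) + 1$, so $\min_{\ell \leq A} g = 1 + \min_{\ell \leq A} f$; on $\{\ell > A\}$ we have $g(\ell) = f(\ell)$, so $\min_{\ell > A} g = \min_{\ell > A} f$. Since $\min g = \min\{\min_{\ell \leq A} g,\ \min_{\ell > A} g\}$, the whole statement reduces to bookkeeping about which of the two half-lines realizes $\min f$.

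First I would treat the case $M_f \leq A$. Here the global minimum of $f$ is attained at some $\ell \leq A$, so $\min_{\ell \leq A} f = \min f$, hence $\min_{\ell \leq A} g = \min f + 1$. On the other half-line, $\min_{\ell > A} g = \min_{\ell > A} f \geq \min f$; combining, $\min g = \min\{\min f + 1,\ \min_{\ell > A} f\}$, which is at least $\min f + 1$ unless $\min_{\ell > A} f = \min f$ — but that is impossible, because $M_f$ is by definition the \emph{largest} $\ell$ with $f(\ell) = \min f$, so no $\ell > A \geq M_f$ can achieve $\min f$. Hence $\min g = \min f + 1 = \min f + \delta(M_f \leq A)$ in this case, and moreover every minimizer of $g$ lies in $\{\ell \leq A\}$, among which $M_f$ itself is a minimizer of $g$; so $M_g \geq M_f$.

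Next the case $M_f > A$. Then $f$ attains its minimum at $M_f > A$, so $\min_{\ell > A} f = \min f$, whence $\min_{\ell > A} g = \min f$. Since $\min_{\ell \leq A} g = 1 + \min_{\ell \leq A} f \geq 1 + \min f > \min f$, we get $\min g = \min f = \min f + \delta(M_f \leq A)$, as claimed. For the inequality $M_g \geq M_f$: the point $M_f$ still satisfies $g(M_f) = f(M_f) = \min f = \min g$ (since $M_f > A$ means no correction is applied there), so $M_f$ is a minimizer of $g$ and therefore $M_g \geq M_f$.

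There is no real obstacle here; the only point requiring a moment's care is the observation, used in the first case, that the \emph{maximality} of $M_f$ among minimizers of $f$ forbids $f$ from reattaining its minimum strictly to the right of $A \geq M_f$ — this is exactly why $\delta(M_f \leq A)$, rather than some weaker condition, governs the jump. Everything else is a direct split into two half-lines and a comparison of the resulting minima, so I would present it compactly in roughly the two-case form above, noting that $f$ bounded below guarantees all the minima in sight exist.
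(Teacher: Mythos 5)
Your proof is correct, and since the paper explicitly omits the proof of Lemma~\ref{lem:fg} (``We omit the straightforward proof''), there is no paper argument to compare against; your two-case split along the half-lines $\{\ell \leq A\}$ and $\{\ell > A\}$ is the natural argument and fills the gap cleanly. One small imprecision worth fixing: in the case $M_f \leq A$ you assert that ``every minimizer of $g$ lies in $\{\ell \leq A\}$,'' but this need not hold --- it is possible that $\min_{\ell > A} f = \min f + 1 = \min g$, in which case $g$ also has minimizers strictly to the right of $A$. Fortunately your argument does not depend on that side-remark; all you need is that $M_f$ itself is a minimizer of $g$ (since $g(M_f) = f(M_f) + 1 = \min f + 1 = \min g$), which already gives $M_g \geq M_f$. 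I would simply delete the offending clause and keep the rest as written.
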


\begin{lemma}
\label{lem:Kstara+1}
For all $a,b \in \ZZ$, 
$\sa \star \sbe(a+1,b) = \sa \star \sbe(a,b) + \delta\Big[ \mab(a,b) \leq \alpha^{-1}(a) \Big],$ and
$\mab(a+1,b) \geq \mab(a,b)$.
\hfill\leanlink{Submodular.AspValley_step_a}
\end{lemma}

\begin{proof}
Let $f(\ell) = \sa (a,\ell) + \sbe(\ell,b)$ and $g(\ell) = \sa(a+1,\ell) + \sbe(\ell,b)$. By Equation \eqref{eq:a+1},
$$g(\ell) = f(\ell) + \delta( \ell \leq \alpha^{-1}(a)).$$
Observe $\min f = \sa \star \sbe(a,b)$ and $\min g = \sa \star \sbe(a+1,b)$, and apply Lemma \ref{lem:fg}.
\end{proof}

\begin{lemma}
\label{lem:Kstarb+1}
For all $a,b \in \ZZ$, 
$\sa \star \sbe(a,b+1) = \sa \star \sbe(a,b) - \delta\Big[ \mab(a,b) > \beta(b) \Big],$ and
$\mab(a,b+1) \geq \mab(a,b)$.
\hfill\leanlink{Submodular.AspValley_step_b}
\end{lemma}

\begin{proof}
Let $f(\ell) = \sa(a,\ell) + \sbe(\ell,b) - 1$ and $g(\ell) = \sa(a,\ell) + \sbe(\ell,b+1)$. 
Equation \eqref{eq:b+1} implies that
$$g(\ell) = f(\ell) + 1 - \delta(\ell > \beta(b)) = f(\ell) + \delta(\ell \leq \beta(b)).$$
Observe that $\min f = \sa \star \sbe(a,b) -1$ and $\min g = \sa \star \sbe(a,b+1)$, and apply Lemma \ref{lem:fg}.
\end{proof}

\begin{proof}[Proof of Theorem \ref{thm:starExists1}]
In light of Proposition \ref{prop:imageASP}, we must show that for any two $\alpha, \beta \in \asp$, the slipface $\sa \star \sbe$ is submodular. Using property (S1), it suffices to show that, if $\sa \star \sbe(a+1,b) = \sa \star \sbe(a+1,b+1)$, then also $\sa \star \sbe(a,b) = \sa \star \sbe(a,b+1)$. By Lemma \ref{lem:Kstarb+1}, we must show that if $\mab(a+1,b) \leq \beta(b)$, then also $\mab(a,b) \leq \beta(b)$. This follows from $\mab(a,b) \leq \mab(a+1,b)$ (Lemma \ref{lem:Kstara+1}). This establishes that submodular slipfaces are closed under $\star$, so $\star$ is well-defined on $\asp$. The remaining claims follow from the identities $\sa^\vee = s_{\alpha^{-1}}$ and $\chi_{\sa} = \ca$ (Examples \ref{eg:saSlipface} and \ref{eg:sai}) and the corresponding identities on slipfaces (Proposition \ref{prop:sfAlgebraDefined} and Lemma \ref{lem:sfAlgebra}). 
\end{proof}

The discussion above also provides information about the inversions of $\alpha \star \beta$.

\begin{lemma}
\label{lem:invStar}
For any $\alpha, \beta \in \asp$, $\beta \leq_L \alpha \star \beta$ and $\alpha \leq_R \alpha \star \beta$.
\hfill\leanbox{\doclink{Submodular.lel_of_dprod}\;\doclink{Submodular.ler_of_dprod}}
\end{lemma}

\begin{proof}
It suffices to prove only the first statement; the second is equivalent to $\alpha^{-1} \leq_L \beta^{-1} \star \alpha^{-1}$. We must prove that $\Inv(\alpha \star \beta) \supseteq \Inv(\beta)$. We argue by contrapositive. Suppose that $u,v \in \ZZ$ satisfy $u < v$ and $\alpha \star \beta(u) < \alpha \star \beta(v)$. We will prove that $\beta(u) < \beta(v)$.

Choose any integer $a$ such that $\alpha \star \beta(u) < a \leq \alpha \star \beta(v)$. By Equation \eqref{eq:b+1}, this is equivalent to $\sab(a,u+1) = \sab(a,u)-1$ and $\sab(a,v+1) = \sab(a,v)$. By Lemma \ref{lem:Kstarb+1}, this means that $\mab(a,u) > \beta(u)$ and $\mab(a,v) \leq \beta(v)$. Lemma \ref{lem:Kstarb+1} also implies that $\mab(a,u) \leq \mab(a,v)$. Chaining these inequalities implies that $\beta(u) < \beta(v)$. 
\end{proof}

\subsection{The operations $\resL, \resR$ on $\asp$}
This subsection parallels the previous. We will prove

\begin{thm}
\label{thm:resLExists}
The set of submodular slipfaces is closed under $\resL$ and $\resR$.
Therefore there are well-defined binary operations $\resL, \resR$ on $\asp$ characterized by $\sa \resL \sbe = s_{\alpha \resL \beta}$ and $\sa \resR \sbe = s_{\alpha \resR \beta}$.
These operations satisfy the following identities:
$(\alpha \resL \beta) \resL \gamma = \alpha \resL (\beta \star \gamma)$,
$\alpha \resR (\beta \resR \gamma) = (\alpha \star \beta) \resR \gamma$,
$(\alpha \resL \beta)^{-1} = \beta^{-1} \resR \alpha^{-1}$,
and
$\chi_{\alpha \resL \beta} = \chi_{\alpha \resR \beta} = \ca + \cb$, as well as Equations \eqref{eq:resLMin} and \eqref{eq:resRMin}.
\end{thm}
\hfill\leanbox{\doclink{Submodular.submodular_of_lres}\;\doclink{Submodular.submodular_of_rres}\;\doclink[Submodular]{AspPerm.lres_spec}\;\doclink[Submodular]{AspPerm.rres_spec}}

\hfill\leanbox{\doclink[Submodular]{AspPerm.lres_assoc}\;\doclink[Submodular]{AspPerm.rres_assoc}\;\doclink[Submodular]{AspPerm.inverse_lres}\;\doclink[Submodular]{AspPerm.chi_lres}\;\doclink[Submodular]{AspPerm.chi_rres}\;\doclink[Submodular]{AspPerm.lres_eq_min}\;\doclink[Submodular]{AspPerm.rres_eq_min}}

\begin{defn}
Let $\alpha, \beta \in \asp$ and $a,b \in \ZZ$. Define
$$ \mtab(a,b) = \sup \{ \ell \in \ZZ:\ \sa \resL \sbe(a,b) = \sa(a,\ell) - \sbei(b,\ell) \}.$$
If $\sa \resL \sbe (a,b) = 0$, then this is understood to be $+\infty$.
\end{defn}

\begin{lemma}
\label{lem:Mta+1}
For all $a,b \in \ZZ$, $\sa \resL \sbe(a+1,b) = \sa \resL \sbe(a,b) + \delta( \mtab(a+1,b) \leq \alpha^{-1}(a) )$, and $\mtab(a+1,b) \leq \mtab(a,b)$.
\end{lemma}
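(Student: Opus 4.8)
The plan is to imitate the proof of Lemma~\ref{lem:Kstara+1}, replacing the minimum defining $\star$ with the maximum defining $\tll$; the one new ingredient is the ``maximum version'' of the elementary Lemma~\ref{lem:fg}. Fix $a,b \in \ZZ$ and set $f(\ell) = \sa(a,\ell) - \sbei(b,\ell)$ and $g(\ell) = \sa(a+1,\ell) - \sbei(b,\ell)$. Equation~\eqref{eq:a+1} (with $b$ replaced by $\ell$) gives $\sa(a+1,\ell) - \sa(a,\ell) = \delta(\ell \le \alpha^{-1}(a))$, so $g(\ell) = f(\ell) + \delta(\ell \le \alpha^{-1}(a))$. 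By Definition~\ref{defn:sfAlgebra}, $\max_\ell f(\ell) = \sa \tll \sbe(a,b)$ and $\max_\ell g(\ell) = \sa \tll \sbe(a+1,b)$, and their largest maximizers are, by definition, $\mtab(a,b)$ and $\mtab(a+1,b)$; Lemma~\ref{lem:setL}, applied with $t = \sbe$ (which depends on $\beta$ and $b$ but not $a$), shows both maxima are attained on a common finite set.

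The auxiliary fact needed is the maximum-analogue of Lemma~\ref{lem:fg}: if $f : \ZZ \to \ZZ$ is bounded above with its set of maximizers bounded above, write $M_f = \max\{\ell : f(\ell) = \max f\}$; then for any $A \in \ZZ$, the function $g(\ell) = f(\ell) + \delta(\ell \le A)$ again has a maximum attained on a set bounded above, with $\max g = \max f + \delta(M_g \le A)$ and $M_g \le M_f$. The proof is short. Since $f \le g \le f+1$, we have $\max g \in \{\max f, \max f + 1\}$, and $\max g = \max f + 1$ holds exactly when $f$ attains its maximum at some $\ell \le A$. If it does, then $\{\ell : g(\ell) = \max g\} = \{\ell \le A : f(\ell) = \max f\}$, whose maximum $M_g$ satisfies $M_g \le A$ and $M_g \le M_f$, so $\delta(M_g \le A) = 1$ and the formula holds. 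If it does not, then necessarily $M_f > A$, so $g(M_f) = f(M_f) = \max f = \max g$ while every $\ell > M_f$ satisfies $g(\ell) = f(\ell) < \max f$; hence $M_g = M_f > A$, so $\delta(M_g \le A) = 0$ and the formula again holds. In both cases $M_g \le M_f$.

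Applying this with $A = \alpha^{-1}(a)$, $M_f = \mtab(a,b)$, $M_g = \mtab(a+1,b)$ yields exactly $\sa \tll \sbe(a+1,b) = \sa \tll \sbe(a,b) + \delta(\mtab(a+1,b) \le \alpha^{-1}(a))$ together with $\mtab(a+1,b) \le \mtab(a,b)$, completing the proof. The one point requiring care is the bookkeeping in the auxiliary fact: unlike Lemma~\ref{lem:Kstara+1}, where the indicator is evaluated at the \emph{old} argmax $\mab(a,b)$, here it must be evaluated at the \emph{new} argmax $\mtab(a+1,b)$, which is dictated by the identity $\{g = \max g\} = \{\ell \le A : f = \max f\}$ in the case $\max g = \max f + 1$. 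Everything else transcribes the $\star$ argument directly.
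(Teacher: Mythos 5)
Your proof is correct and takes essentially the same approach as the paper: both proofs identify the relation $g(\ell) = f(\ell) + \delta(\ell \le \alpha^{-1}(a))$ via Equation~\eqref{eq:a+1} and then invoke an $fg$-type lemma. The only cosmetic difference is that the paper negates both functions (setting $f(\ell) = -\sa(a+1,\ell) + \sbei(b,\ell)$, etc.) so that it can apply Lemma~\ref{lem:fg} verbatim, whereas you prove a fresh maximum-analogue of that lemma; your observation that the indicator is then evaluated at the \emph{new} argmax $\mtab(a+1,b)$ is exactly the content of the paper's choice to put the $a+1$ index on $f$ rather than $g$.
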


\begin{proof}
Define $f(\ell) = - \sa(a+1,\ell) + \sbei(b,\ell)$ and $g(\ell) = -\sa(a,\ell) + \sbei(b,\ell)$. By Equation \eqref{eq:a+1},
$$g(\ell) = f(\ell) + \delta( \ell \leq \alpha^{-1}(a) ).$$
Observe $\min g = - \sa \resL \sbe(a,b)$ and $\min f = - \sa \resL \sbe(a+1,b)$, and apply Lemma \ref{lem:fg}.
\end{proof}

\begin{lemma}
\label{lem:Mtb+1}
For all $a,b \in \ZZ$, $\sa \resL \sbe(a,b+1) = \sa \resL \sbe(a,b) - \delta(\mtab(a,b) \leq \beta(b))$, and $\mtab(a,b+1) \geq \mtab(a,b)$.
\end{lemma}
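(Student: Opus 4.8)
The plan is to reuse, almost verbatim, the mechanism behind Lemmas \ref{lem:Kstarb+1} and \ref{lem:Mta+1}: rewrite the two slipface values being compared as $\min f$ and $\min g$ for functions $f,g : \ZZ \to \ZZ$ differing by a single half-line indicator, and then quote the elementary Lemma \ref{lem:fg}.

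Fix $a,b \in \ZZ$. Since $\sa \tll \sbe(a,b) = \max_{\ell}\bigl[\sa(a,\ell) - \sbei(b,\ell)\bigr]$, I would set
\[
f(\ell) = -\sa(a,\ell) + \sbei(b,\ell), \qquad g(\ell) = -\sa(a,\ell) + \sbei(b+1,\ell),
\]
each of which is bounded below, since each is eventually constant as $\ell \to +\infty$ and as $\ell \to -\infty$ (the fact recorded when $\tll$ was defined). By construction $\min f = -\sa\tll\sbe(a,b)$ and $\min g = -\sa\tll\sbe(a,b+1)$, while $M_f$, the largest minimizer of $f$, is $\mtab(a,b)$ by the definition of the latter. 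The relation between $f$ and $g$ is furnished by \eqref{eq:a+1} applied to $\beta^{-1}$ with its first argument raised from $b$ to $b+1$: $\sbei(b+1,\ell) - \sbei(b,\ell) = \#\{n \ge \ell : \beta^{-1}(n) = b\} = \delta(\ell \le \beta(b))$, so $g(\ell) = f(\ell) + \delta(\ell \le \beta(b))$.

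Lemma \ref{lem:fg} with $A = \beta(b)$ then yields $\min g = \min f + \delta(M_f \le \beta(b))$ together with $M_g \ge M_f$; negating the first gives $\sa\tll\sbe(a,b+1) = \sa\tll\sbe(a,b) - \delta(\mtab(a,b) \le \beta(b))$, and the second is exactly the claim $\mtab(a,b+1) = M_g \ge M_f = \mtab(a,b)$. (The threshold produced this way is $\beta(b)$, in parallel with the $\star$-version in Lemma \ref{lem:Kstarb+1}, so the $\beta^{-1}(b)$ in the displayed statement should read $\beta(b)$.) I do not anticipate any genuine obstacle. The two points needing a little care are: the sign bookkeeping that converts the $\max$ defining $\tll$ into a $\min$ of $-f$ and makes $g$ arise from $f$ by \emph{adding} a half-line indicator, which goes through cleanly here precisely because $\sbei$ occurs with a minus sign in the defining expression, so raising its first argument increases $g$; and the well-definedness of $M_f$, whose defining set is bounded above exactly when $\sa\tll\sbe(a,b) > 0$ — the only case in which the asserted decrease is non-vacuous — in keeping with the tacit conventions of the earlier proofs.
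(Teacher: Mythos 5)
Your proposal is correct and follows the same mechanism as the paper's proof: introduce $f,g$ that differ by a half-line indicator, observe that the relevant extrema and arg-extrema are $\min f$, $\min g$, $M_f$, $M_g$, and invoke Lemma~\ref{lem:fg}. The paper's own proof sets $f(\ell) = -\sa(a,\ell) + \sbe(b,\ell)$ (a typo — per the definition of $\tll$ it should be $\sbei$, exactly as you write) and then computes the jump via \eqref{eq:a+1} to be at $\beta^{-1}(b)$, which is what makes the erroneous $\beta^{-1}(b)$ appear in the statement. You correctly derive that with $\sbei$ the jump occurs at $\ell = \beta(b)$, so the statement should read $\delta(\mtab(a,b) \leq \beta(b))$. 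This correction is confirmed by the way Lemma~\ref{lem:Mtb+1} is actually invoked in the proof of Lemma~\ref{lem:invTri}, where the conclusions drawn are $\mtab(a,v) \leq \beta(v)$ and $\mtab(a,u) > \beta(u)$, consistent with your $\beta(b)$ and not with the printed $\beta^{-1}(b)$. So you have found a genuine (harmless, since unused) typo in both the statement and proof of the lemma, and your argument is the intended one.
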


\begin{proof}
Define $f(\ell) = -\sa(a,\ell) + \sbei(b,\ell)$ and $g(\ell) = -\sa(a,\ell) + \sbei(b+1,\ell)$. By Equation \eqref{eq:a+1},
$$g(\ell) = f(\ell) + \delta( \ell \leq \beta(b) ).$$
Observe $\min f = - \sa \resL \sbe(a,b)$ and $\min g = - \sa \resL \sbe(a,b+1)$, and apply Lemma \ref{lem:fg}.
\end{proof}

\begin{proof}[Proof of Theorem \ref{thm:resLExists}]
We first check closure. By Proposition \ref{prop:imageASP}, it suffices to show that $\sa \resL \sbe$ and $\sa \resR \sbe$ are submodular for all $\alpha, \beta \in \asp$. Since $\sa \resR \sbe = (s_{\beta^{-1}} \resL s_{\alpha^{-1}})^\vee$, it suffices to check that $\sa \resL \sbe$ is submodular for all $\alpha, \beta \in \asp$. Fix permutations $\alpha, \beta \in \asp$.

This establishes that $\resL$ and $\resR$ are well-defined on $\asp$. The identities follow from the identities $\sa^\vee = s_{\alpha^{-1}}$ and $\chi_{\sa} = \ca$ (Examples \ref{eg:saSlipface} and \ref{eg:sai}) and the corresponding identities on slipfaces, established in Proposition \ref{prop:sfAlgebraDefined} and Lemma \ref{lem:sfAlgebra}. Equations \eqref{eq:resLMin} and \eqref{eq:resRMin} follow from Lemma \ref{lem:sfOpChar}.
\end{proof}

The analog of Lemma \ref{lem:invStar} for the operations $\resL, \resR$ is the following.

\begin{lemma}
\label{lem:invRes}
For all $\alpha, \beta \in \asp$, $(\alpha \resL \beta) \starr \beta^{-1}$, and 
$\alpha \resL \beta \leq_R \alpha$.
\hfill\leanbox{\doclink{Submodular.reducedProduct_of_lres}\;\doclink{Submodular.ler_of_lres}}
\end{lemma}

\begin{proof}
First, suppose that $(u,v) \in \Inv(\alpha \resL \beta)$. We will show that $(u,v) \not\in \Inv(\beta)$. There exists an integer $a$ such that $\alpha \resL \beta(v) < a \leq \alpha \resL \beta(u)$. By Equation \eqref{eq:b+1}, $\sabl(a,v+1) = \sabl(a,v)-1$ and $\sabl(a,u) = \sabl(a,u+1)$, whence Lemma \ref{lem:Mtb+1} implies that $\mtab(a,v) \leq \beta(v)$, $\mtab(a,u) > \beta(u)$. Since $u < v$, Lemma \ref{lem:Mtb+1} also implies that $\mtab(a,v) \geq \mtab(a,u)$. Chaining these inequalities implies $\beta(u) < \beta(v)$, as desired.

\newcommand{\sbail}{s_{\beta^{-1} \resL \alpha^{-1}}}
Now suppose that $(u,v) \in \Inv((\alpha \resL \beta)^{-1})$. We will show that $(u,v) \in \Inv(\alpha^{-1})$. Similarly to above, there exists $b$ such that $\sabl(v+1,b) = \sabl(v,b)$ and $\sabl(u+1,b) = \sabl(u,b)+1$. Lemma \ref{lem:Mta+1} implies that $\mtab(v+1,b) > \alpha^{-1}(v)$ and $\mtab(u+1,b) \leq \alpha^{-1}(u)$. Since $u<v$, Lemma \ref{lem:Mta+1} also implies that $\mtab(v+1,b) \leq \mtab(u+1,b)$. Chaining these inequalities gives $\alpha^{-1}(u) > \alpha^{-1}(v)$, as desired.
\end{proof}

\begin{cor}
\label{cor:reducedResR}
For all $\alpha, \beta \in \asp$, $\alpha^{-1} \starr (\alpha \resR \beta)$ and $\alpha \resR \beta \leq_L \beta$.
\hfill\leanbox{\doclink{Submodular.reducedProduct_of_rres}\;\doclink{Submodular.lel_of_rres}}
\end{cor}
\begin{proof}
These statements are equivalent to $(\beta^{-1} \resL \alpha^{-1}) \starr \alpha$ and $\beta^{-1} \resL \alpha^{-1} \leq_R \beta^{-1}$.
\end{proof}

\section{\texorpdfstring{When $\star, \resL, \resR$ are ordinary products}{When Demazure product and residual are ordinary products}}
\label{sec:reducedWeak}

This section examines the relation between $\star, \resL, \resR$ on $\asp$ and the ordinary product.

\begin{lemma}
\label{lem:reducedStar}
For all $\alpha, \beta \in \asp$, $\alpha \star \beta \geq \alpha \beta$, and $\alpha \star \beta = \alpha \beta$ if and only if $\alpha \starr \beta$.
\par\noindent\hfill\leanbox{\doclink{ReducedProducts.mul_le_star}\;\doclink{ReducedProducts.star_eq_mul_iff_reducedProduct}}
\end{lemma}

\begin{proof}
Observe that for all $a,b,\ell \in \ZZ$,
$$
\sa(a,\ell) + \sbe(\ell,b) = \# \{n \geq \ell: \alpha(n) < a \} + \# \{n < \ell: \beta^{-1}(n) \geq b \}.
$$
The two sets counted on the right side are disjoint. Partitioning their union a different way, the right side is equal to
\begin{eqnarray*}
\# \{n \in \ZZ: \alpha(n) < a \mbox{ and } \beta^{-1}(n) \geq b \}
+ \# \{n \geq \ell: \alpha(n) < a \mbox{ and } \beta^{-1}(n) < b \}\\
+ \# \{n < \ell: \alpha(n) \geq a \mbox{ and } \beta^{-1}(n) \geq b \}.
\end{eqnarray*}
The first of these three terms is equal to $s_{\alpha \beta}(a,b)$; it follows from this that 
$$\sa(a,\ell) + \sbe(\ell,b) \geq s_{\alpha \beta}(a,b) \mbox{ for all } a,b,\ell \in \ZZ.$$ In other words, $\alpha \star \beta \geq \alpha \beta$. This argument also shows how to describe the equality case: $\alpha \star \beta = \alpha \beta$ if and only if for all $a,b \in \ZZ$, there exists an integer $\ell \in \ZZ$ such that the second and third terms in the sum above vanish. This amounts to saying that every element of $\{n \in \ZZ: \alpha(n) < a \mbox{ and } \beta^{-1}(n) < b\}$ is less than every element of $\{n \in \ZZ: \alpha(n) \geq a \mbox{ and } \beta^{-1}(n) \geq b\}$.

Put another way, $\alpha \star \beta > \alpha \beta$ if and only if there exist two integers $a,b$ and two integers $m < n$ such that $\alpha(n) < a \leq \alpha(m)$ and $\beta^{-1}(n) < b \leq \beta^{-1}(m)$. For fixed $m,n$, there exist $a,b$ satisfying these chains if and only if $\alpha(n) < \alpha(m)$ and $\beta^{-1}(n) < \beta^{-1}(m)$. Since we require $m < n$, this simply means $(m,n) \in \Inv (\alpha) \cap \Inv(\beta^{-1})$. So $\alpha \star \beta > \alpha \beta$ if and only if $\Inv (\alpha) \cap \Inv(\beta^{-1})$ is nonempty.
\end{proof}

\begin{lemma}
\label{lem:reducedRes}
For all $\alpha, \beta \in \asp$, $\alpha \resL \beta \leq \alpha \beta$ and $\alpha \resR \beta \leq \alpha \beta$. Furthermore, $\alpha \resL \beta = \alpha \beta$ if and only if $\beta^{-1} \leq_L \alpha$ and $\alpha \resR \beta = \alpha \beta$ if and only if $\alpha^{-1} \leq_R \beta$.
\hfill\leanbox{\doclink{ReducedProducts.lres_le_mul}\;\doclink{ReducedProducts.lres_eq_mul_iff}\;\doclink{ReducedProducts.rres_le_mul}\;\doclink{ReducedProducts.rres_eq_mul_iff}}
\end{lemma}

\begin{proof}
We will prove that $\alpha \resL \beta \leq \alpha \beta$, with equality if and only if $\Inv(\beta^{-1}) \subseteq \Inv(\alpha)$. The corresponding statements about $\resR$ will then follow from the identity $\alpha \resR \beta = (\beta^{-1} \resL \alpha^{-1})^{-1}$. The strategy is analogous to that of Lemma \ref{lem:reducedStar}, although the counting argument is slightly more subtle.

Observe that for all $a,b,\ell \in \ZZ$,
\begin{eqnarray*}
\sa(a,\ell) - \sbei(b,\ell) &=& \# \{n \geq \ell: \alpha(n) < a \} - \# \{n \geq \ell: \beta^{-1}(n) < b \}\\
&=& \# \{ n \geq \ell: \alpha(n) < a,\ \beta^{-1}(n) \geq b \} - \# \{n \geq \ell: \alpha(n) \geq a,\ \beta^{-1}(n) < b \}
\end{eqnarray*}
In the second line, the intersection of the two sets on the right side has been removed from both. From here, we may rewrite the first term in the last line as
\begin{eqnarray*}
\# \{n \geq \ell: \alpha(n) < a,\ \beta^{-1}(n) \geq b \} &=& \# \{n \in \ZZ: \alpha(n) < a,\ \beta^{-1}(n) \geq b\}
\\&&
- \# \{n < \ell: \alpha(n) < a,\ \beta^{-1}(n) \geq b \}\\
&=& s_{\alpha \beta}(a,b) - \# \{n < \ell: \alpha(n) < a,\ \beta^{-1}(n) \geq b \}.
\end{eqnarray*}
Therefore 
\begin{eqnarray*}
\sa(a,\ell) - \sbei(b,\ell)\ =\ s_{\alpha \beta}(a,b) &-& \# \{n < \ell: \alpha(n) < a,\ \beta^{-1}(n) \geq b \}\\
 &-& \# \{n \geq \ell: \alpha(n) \geq a,\ \beta^{-1}(n) < b \}.
\end{eqnarray*}
Therefore $\sa(a,\ell) - \sbei(b,\ell) \leq s_{\alpha \beta}(a,b)$ for all $a,b,\ell \in \ZZ$, with equality case characterized by two sets being empty, much as in the proof of Lemma \ref{lem:reducedStar}. Therefore $\alpha \resL \beta \leq \alpha \beta$. Studying the equality case as in Lemma \ref{lem:reducedStar}, $\alpha \resL \beta < \alpha \beta$ if and only if there exist integers $m < n$ and integers $a,b$ such that $\alpha(m) < a \leq \alpha(n)$ and $\beta^{-1}(m) \geq b > \beta^{-1}(n)$; this condition is equivalent to $\Inv(\beta^{-1}) \backslash \Inv(\alpha)$ being nonempty. So $\alpha \resL \beta = \alpha \beta$ if and only if $\Inv(\beta^{-1}) \subseteq \Inv(\alpha)$.
\end{proof}

Among other things, this clarifies the relationship between the weak and strong Bruhat orders.

\begin{cor}
\label{cor:weakStrong}
If either $\alpha \leq_L \beta$ or $\alpha \leq_R \beta$, and $\chi_\alpha \leq \chi_\beta$, then $\alpha \leq \beta$.
\hfill\leanbox{\doclink{ReducedProducts.le_of_le_weak_L_of_chi_le}\;\doclink{ReducedProducts.le_of_le_weak_R_of_chi_le}}
\end{cor}

\begin{proof}
If $\alpha \leq_L \beta$ and $\ca \leq \cb$, then Lemma \ref{lem:reducedWeakEquivs} implies $\beta = (\beta \alpha^{-1}) \starr \alpha$. Corollary \ref{cor:stGeqS} implies that $\beta \geq \alpha$, since $\chi_{\beta \alpha^{-1}} = \chi_\beta - \ca \geq 0$. Similarly, if $\alpha \leq_R \beta$, then $\beta = \alpha \starr (\alpha^{-1} \beta) \geq \alpha$.
\end{proof}

\section{Greediness, stinginess, and the reduction theorem}
\label{sec:mainTheorems}

The tools of the previous section furnish proofs of the greedy characterization of $\star$, the ``stingy'' characterizations of $\resL$ and $\resR$, and the reduction theorem.

\begin{proof}[Proof of Theorem \ref{thm:starGreedy}]
Fix $\alpha, \beta \in \asp$.
Lemmas \ref{lem:compatLeq} and \ref{lem:reducedStar} imply that if $\alpha_1 \leq \alpha, \beta_1 \leq \beta$, then
\leanEquation{eq:astarbBound}{%
\alpha \star \beta \geq \alpha_1 \star \beta_1 \geq \alpha_1 \beta_1.}{%
\leanlink{Reduction.mul_le_star_of_le}}
It remains to exhibit some equality cases.
First, let $\alpha_1 = (\alpha \star \beta) \beta^{-1}$ and $\beta_1 = \beta$.
Since $\beta \leq_L \alpha \star \beta$ (Lemma \ref{lem:invStar}), Lemma \ref{lem:reducedRes} implies that $\alpha_1 = (\alpha \star \beta) \resL \beta^{-1}$.
 By Lemma \ref{lem:sfOpChar}, $\alpha_1$ is the minimum element of $\{ \gamma \in \asp: \gamma \star \beta \geq \alpha \star \beta \}$. Of course $\alpha$ belongs to this set, so $\alpha_1 \leq \alpha$. The shift of $\alpha_1$ is $\chi_{\alpha_1} = \chi_\alpha$, by Theorem \ref{thm:starExists1} and Equation \eqref{eq:chiHom}. Lemma \ref{lem:reducedRes} implies $\alpha_1 \starr \beta = \alpha_1 \beta = \alpha \star \beta$. So equality is obtained in Equation \eqref{eq:astarbBound} in a case where $\beta_1 = \beta$ and $\chi_{\alpha_1} = \chi_\alpha$; this proves Equations \eqref{eq:starGreedyAlpha} and \eqref{eq:starGreedy} from the theorem. The remaining Equation \eqref{eq:starGreedyBeta} follows by applying Equation \eqref{eq:starGreedyAlpha} to $\beta^{-1} \star \alpha^{-1}$, plus the identity $(\beta_1^{-1} \star \alpha^{-1})^{-1} = \alpha \star \beta_1$ and Lemma \ref{lem:bruhatInverse}.
\end{proof}

The analogy of the ``greedy theorem'' for $\star$ is the following ``stingy theorem'' for $\resL$.

\begin{thm}
\label{thm:resLStingy}
For all $\alpha, \beta \in \asp$, the following minima exist in Bruhat order, and equal $\alpha \resL \beta^{-1}$.
\begin{flalign*}
\eqnum{eq:resLGreedy}
&& \alpha \resL \beta^{-1} &= \min \left\{ \alpha_1 \beta_1^{-1}:\ \alpha_1 \geq \alpha,\ \beta_1 \leq \beta \right\}
& \leanlink{Reduction.lres_stingy}
\\
\eqnum{eq:resLGreedyBeta}
&& &= \min \left\{ \alpha \beta_1^{-1}: \beta_1 \lechi \beta \right\}
& \leanlink{Reduction.lres_stingy_beta}
\\
\eqnum{eq:resLGreedyAlpha}
&& &= \min \left\{ \alpha_1 \beta^{-1}: \alpha_1 \geq_\chi \alpha \right\}.
& \leanlink{Reduction.lres_stingy_alpha}
\end{flalign*}
\end{thm}

\begin{proof}
Fix $\alpha, \beta \in \asp$. Lemmas \ref{lem:compatLeq} and \ref{lem:reducedRes} imply that if $\alpha_1 \geq \alpha$ and $\beta_1 \leq \beta$, then
\begin{equation}
\label{eq:aresLbBound}
\alpha \resL \beta^{-1} \leq \alpha_1 \resL \beta_1^{-1} \leq \alpha_1 \beta_1^{-1}.
\end{equation}
We must now find some equality cases. 
First, define $\alpha_1 = \alpha$ and $\beta_1 = (\beta \resR \alpha^{-1}) \alpha$. So $\alpha \resL \beta^{-1} = \alpha \beta_1^{-1}$ and $\chi_{\beta_1} = \chi_\beta$. We must show that $\beta_1 \leq \beta$. We may also write $\beta_1 = (\beta \resR \alpha^{-1}) \resR \alpha$, by Corollary \ref{cor:reducedResR} and Lemma \ref{lem:reducedRes}. So $\beta_1$ is the minimum permutation such that 
$ \beta_1 \star \alpha^{-1} \geq \beta \resL \alpha^{-1}$. But $\beta$ is another such permutation, since 
$\beta_1 \star \alpha^{-1} \geq \beta_1 \alpha^{-1} = (\alpha \resL \beta^{-1})^{-1} = \beta \resR \alpha^{-1}$.
Therefore $\beta \geq \beta_1$.

For the second equality case, define $\alpha_1 = (\alpha \resL \beta^{-1}) \beta$. Lemma \ref{lem:invRes} implies $(\alpha \resL \beta^{-1}) \starr \beta = \alpha_1$, and Lemma \ref{lem:sfOpChar} implies $(\alpha \resL \beta^{-1}) \star \beta \geq \alpha$. Therefore $\alpha_1 \geq_\chi \alpha$ and $\alpha_1 \beta = \alpha \resL \beta^{-1}$.
\end{proof}

The reduction theorem from the introduction follows by similar techniques.

\begin{proof}[Proof of Theorem \ref{thm:reduce}]
We will prove only the second paragraph of the theorem statement, since the first follows directly from it, along with the monotonicity property in Lemma \ref{lem:compatLeq}.

Let $\alpha, \beta, \gamma \in \asp$ satisfy $\alpha \star \beta \geq \gamma$, and define $\alpha_1 = \gamma \resL \beta^{-1}$ and $\beta_1 = \alpha_1^{-1} \resR \gamma$.
We apply Lemma \ref{lem:sfOpChar} (specialized to $\asp$) several times in a row. The assumed bound $\alpha \star \beta \geq \gamma$ implies $\alpha \geq \gamma \resL \beta^{-1} = \alpha_1$. The bound $\alpha_1 \geq \gamma \resL \beta^{-1}$ implies $\alpha_1 \star \beta \geq \gamma$, which implies $\beta \geq \alpha_1^{-1} \resR \gamma = \beta_1$. So $\alpha_1, \beta_1$ are bounded above by $\alpha, \beta$, respectively. They also have the same shifts, by Theorem \ref{thm:resLExists}.

Lemma \ref{lem:invRes} says $\alpha_1 \leq_R \gamma$, so  $\beta_1 = \alpha_1^{-1} \resR \gamma = \alpha_1^{-1} \gamma$ by Lemma \ref{lem:reducedRes}. The inequality $\alpha_1 \leq_R \gamma$ also implies via Lemma \ref{lem:reducedWeakEquivs} that $\alpha_1 \starr (\alpha_1^{-1} \gamma) = \gamma$. Combining these equations gives $\alpha_1 \starr \beta_1 = \gamma$.
\end{proof}

In some applications, it is convenient to use a form of Theorem \ref{thm:reduce} for products of more than two permutations; we give such a statement here for convenience. To state it, we must first clarify what is meant by a reduced product of three or more permutations; the definition below is based on the standard definition of a \emph{reduced word} in a Coxeter group, adapted both to products of non-generators and to permutations that may have infinitely many inversions.

\begin{lemma}
\label{lem:reducedProdSeveral}
Let $(\alpha_1, \cdots, \alpha_\ell)$ be an $\ell$-tuple of permutations in $\asp$. Denote by $\pi_n$ the product of the suffix $\alpha_{n+1} \cdots \alpha_\ell$ ($\pi_\ell$ is the identity). The following are equivalent.
\begin{enumerate}
\item For all $u,v \in \ZZ$ with $u \neq v$, the sign of $\pi_n(u) - \pi_n(v)$ changes at most once as $n$ decreases from $\ell$ to $0$.
\item The set $\Inv(\pi_0) = \Inv(\alpha_1 \cdots \alpha_\ell)$ is the \emph{disjoint} union of the sets 
$$I_n = \{ (\pi^{-1}_n(u), \pi^{-1}_n(v)):\ (u,v) \in \Inv(\alpha_n) \} \mbox{ for } 1 \leq n \leq \ell.$$
\item For each $1 \leq n \leq \ell$, $\alpha_n \starr \pi_n$.
\end{enumerate}
\end{lemma}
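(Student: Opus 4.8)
The plan is to prove the equivalence of the three conditions by establishing $(1) \Leftrightarrow (2)$ directly from the combinatorics of inversions and a telescoping/counting argument, and then $(2) \Leftrightarrow (3)$ using the characterization of reduced products in terms of inversion sets already developed (Definition \ref{defn:reducedProduct} and Lemma \ref{lem:reducedWeakEquivs}).

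For $(1) \Leftrightarrow (2)$, the key observation is that a pair $(u,v)$ with $u < v$ lies in $\Inv(\pi_0)$ if and only if the sign of $\pi_n(u) - \pi_n(v)$ is different at $n = 0$ (where it is negative, meaning $\pi_0(u) > \pi_0(v)$... let me set conventions: $(u,v) \in \Inv(\pi_0)$ means $u<v$ and $\pi_0(u) > \pi_0(v)$) than at $n = \ell$ (where $\pi_\ell = \id$, so $\pi_\ell(u) - \pi_\ell(v) = u - v < 0$). Thus $(u,v) \in \Inv(\pi_0)$ exactly when the sign changes an odd number of times. Next, I would track when the sign changes as $n$ decreases by one, i.e. passing from $\pi_{n} = \alpha_{n+1}\pi_{n+1}$... more precisely, note $\pi_{n-1} = \alpha_n \pi_n$, so $\pi_{n-1}(u) - \pi_{n-1}(v) = \alpha_n(\pi_n(u)) - \alpha_n(\pi_n(v))$; this has opposite sign from $\pi_n(u) - \pi_n(v)$ precisely when $(\min(\pi_n(u),\pi_n(v)), \max(\pi_n(u),\pi_n(v))) \in \Inv(\alpha_n)$. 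Moreover, when this happens, the pair $(\pi_n^{-1}(u'), \pi_n^{-1}(v'))$ that is being recorded (with $u' < v'$ the sorted pair $\{\pi_n(u),\pi_n(v)\}$) is exactly an element of $I_n$, and conversely every element of $I_n$ arises this way from a unique $(u,v)$. So $(u,v) \in \Inv(\pi_0)$ iff the total number of sign changes along the chain is odd, and the number of sign changes equals the number of indices $n$ for which the corresponding pair contributes to $I_n$. Condition (1) says this number is $0$ or $1$; so (1) holds iff for every $(u,v)$ the number of contributing indices is $0$ or $1$, which (since $(u,v) \in \Inv(\pi_0)$ iff the number is odd) is equivalent to saying $\Inv(\pi_0)$ is the disjoint union $\bigsqcup_n I_n$ — this is (2). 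I would present this as: (1) $\Rightarrow$ at most one contribution, so the union is disjoint and every inversion of $\pi_0$ is covered; conversely if (1) fails there is some $(u,v)$ with at least two sign changes, and the two indices where these occur witness a failure of disjointness of the $I_n$ (or, if the net count is still odd but $\geq 3$, still a failure of disjointness).

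For $(2) \Leftrightarrow (3)$: By Lemma \ref{lem:reducedWeakEquivs}, $\alpha_n \starr \pi_n$ is equivalent to $\beta := \alpha_n\pi_n = \pi_{n-1}$ satisfying $\pi_n \leq_L \pi_{n-1}$, i.e. $\Inv(\pi_n) \subseteq \Inv(\pi_{n-1})$, and one checks (again using the inversion-transfer computation, essentially the content of the proof of Lemma \ref{lem:reducedWeakEquivs}) that this is equivalent to $\Inv(\pi_{n-1}) = \Inv(\pi_n) \sqcup I_n$ as a \emph{disjoint} union, where $I_n$ is the index-$n$ set above. Iterating, condition (3) holds for all $n$ iff $\Inv(\pi_0) = \Inv(\pi_\ell) \sqcup I_\ell \sqcup I_{\ell-1} \sqcup \cdots \sqcup I_1$, and since $\pi_\ell = \id$ has no inversions, this is exactly condition (2). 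I would phrase the induction carefully: assuming $\alpha_n \starr \pi_n$ for $n = \ell, \ell-1, \ldots, k+1$ gives $\Inv(\pi_k) = \bigsqcup_{n > k} I_n$, and then $\alpha_k \starr \pi_k$ adds $I_k$ disjointly.

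The main obstacle I expect is bookkeeping the sign-change-to-inversion correspondence with correct indexing and conventions — in particular being careful that (a) the pair recorded in $I_n$ is the pull-back under $\pi_n^{-1}$ of the \emph{sorted} version of $\{\pi_n(u), \pi_n(v)\}$, (b) an index $n$ contributes to the sign-change count for $(u,v)$ iff that sorted pair is an inversion of $\alpha_n$, and (c) distinct $(u,v)$'s give distinct elements of $I_n$ for fixed $n$ (injectivity of $(u,v) \mapsto (\pi_n^{-1}u', \pi_n^{-1}v')$), so that "disjointness of the union $\bigsqcup I_n$" genuinely captures "at most one sign change per pair." Once the correspondence is set up cleanly, the logical equivalences are short; the risk is an off-by-one in the relation $\pi_{n-1} = \alpha_n \pi_n$ versus $\pi_n = \alpha_{n+1}\pi_{n+1}$, which I would pin down once at the start and then use consistently.
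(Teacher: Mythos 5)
Your strategy uses the same two key tools as the paper (sign-change counting for the $(1)\Leftrightarrow(2)$ part; inversion transfer under $\leq_L$ and Lemma \ref{lem:reducedWeakEquivs} for the $(2)\Leftrightarrow(3)$ part), but you organize them as two biconditionals whereas the paper proves the cycle $(1)\Rightarrow(2)\Rightarrow(3)\Rightarrow(1)$. Two of your implications have gaps.

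For $(2)\Rightarrow(1)$ (stated as the contrapositive), you assert that two sign changes for a pair $(u,v)$ with $u<v$ produce two sets $I_{n_1}$, $I_{n_2}$ that intersect. This is not quite right. When the sign flips from negative to positive at index $n_1$ the contributed pair is $(u,v)\in I_{n_1}$, but when it flips back from positive to negative at $n_2<n_1$ the contributed pair is $(v,u)\in I_{n_2}$, a \emph{different} ordered pair. So for exactly two sign changes, $I_{n_1}\cap I_{n_2}$ may well be empty; the failure of $(2)$ is instead that $I_{n_1}$ contains a pair $(u,v)\notin\Inv(\pi_0)$, so $\bigcup_n I_n\not\subseteq\Inv(\pi_0)$. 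Only for an odd count $\geq 3$ do you genuinely get a disjointness failure, namely $(u,v)\in I_{n_1}\cap I_{n_3}$. The conclusion is correct, but the witness is of a different kind in the even-count case than you state.

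The more serious issue is that $(2)\Rightarrow(3)$ is not actually proved. You correctly observe that $\alpha_n\starr\pi_n$ is equivalent to $\Inv(\pi_{n-1})=\Inv(\pi_n)\sqcup I_n$ (disjointly), and iterating this gives $(3)\Rightarrow(2)$. But the iteration does not reverse: knowing only the global identity $\Inv(\pi_0)=\bigsqcup_n I_n$ does not by itself yield the stage-by-stage identities $\Inv(\pi_{n-1})=\Inv(\pi_n)\sqcup I_n$, since $\Inv(\pi_n)$ is determined by the actual permutation $\pi_n$, not formally by $\Inv(\pi_0)$. The fix is the same observation as above: under $(2)$, every $I_n\subseteq\Inv(\pi_0)$, so every element of every $I_n$ is an ordered pair $(a,b)$ with $a<b$. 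Since (as your own transfer computation shows) a pair in $\Inv(\pi_n)\setminus\Inv(\pi_{n-1})$ corresponds exactly to an element $(b,a)\in I_n$ with $b>a$, there are no such pairs, hence $\Inv(\pi_n)\subseteq\Inv(\pi_{n-1})$, hence $\alpha_n\starr\pi_n$ by Lemma \ref{lem:reducedWeakEquivs}. (This is precisely what the paper's short $(3)\Rightarrow(1)$ argument exploits from the other direction.) Alternatively, having $(1)\Rightarrow(2)$ and $(3)\Rightarrow(2)$, you could close a cycle by also giving $(2)\Rightarrow(3)$ or $(3)\Rightarrow(1)$; as written the chain is not closed.
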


\begin{proof}
$(1) \Rightarrow (2)$: for all $u<v$, $(u,v) \in \Inv(\pi_0)$ if and only if $\pi_n(u) - \pi_n(v)$ changes sign an \emph{odd} number of times; assuming (1), this is equivalent to changing signs once, and in turn to belonging to exactly one (and necessarily only one) set $I_n$.

$(2) \Rightarrow (3)$: this follows since $\Inv(\alpha_n) \cap \Inv(\pi_n^{-1}) = \{ ( \pi_n(u), \pi_n(v)): (u,v) \in I_n\}$ and $\Inv(\pi_n^{-1})$ is contained in the union of $\{ (\pi_n(u), \pi_n(v)): (u,v) \in I_m \}$ for $n +1 \leq m \leq \ell$, which are disjoint.

$(3) \Rightarrow (1)$: if $(1)$ is false, then there exists some $u<v$ and $n$ such that $\pi_n(u) > \pi_n(v)$ but $\alpha_n \pi_n(u) < \alpha_n \pi_n(v)$; this implies that $( \pi_n(v), \pi_n(u)) \in \Inv(\alpha_n) \cap \Inv( \pi_n^{-1})$, so $(3)$ is false.
\end{proof}

\begin{defn}
An $\ell$-tuple $(\alpha_1, \cdots, \alpha_\ell)$ in $\asp$ is \emph{reduced} if the three equivalent conditions in Lemma \ref{lem:reducedProdSeveral} hold.
\end{defn}

\begin{rem}
Although we don't need it here, the following recursive criterion for reducedness is sometimes useful. For any $1 \leq i \leq \ell-1$, a tuple $(\alpha_1, \cdots, \alpha_\ell)$ is reduced if and only if both $(\alpha_1, \cdots, \alpha_i)$ and $(\alpha_{i+1}, \cdots, \alpha_\ell)$ are reduced tuples, and $(\alpha_1 \cdots \alpha_i) \starr (\alpha_{i+1} \cdots \alpha_\ell)$. This follows from criterion (1) in Lemma \ref{lem:reducedProdSeveral}.
\end{rem}

\begin{thm}
\label{thm:reduceSeveral}
Let $G \leq \asp$ be a subgroup that is closed under $\resL$. For any $\alpha_1, \cdots, \alpha_\ell, \gamma \in G$, if $\alpha_1 \star \cdots \star \alpha_\ell \geq_\chi \gamma$, then there exists a \emph{reduced} tuple $(\beta_1, \cdots, \beta_\ell)$ such that $\beta_i \in G$ and $\beta_i \lechi \alpha_i$ for all $i$, and
$\beta_1 \star \cdots \star \beta_\ell = \beta_1 \cdots \beta_\ell = \gamma.$
\end{thm}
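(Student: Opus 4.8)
The plan is to induct on $\ell$, at each stage peeling off the \emph{last} factor and invoking the two-permutation reduction theorem (Theorem~\ref{thm:reduce}) together with the recursive reducedness criterion from the remark following Lemma~\ref{lem:reducedProdSeveral}. The base case $\ell = 1$ is immediate: the hypothesis $\alpha_1 \geq_\chi \gamma$ says exactly that $\gamma \lechi \alpha_1$, so one takes $\beta_1 = \gamma \in G$, and the one-term tuple $(\gamma)$ is reduced for trivial reasons (criterion (3) of Lemma~\ref{lem:reducedProdSeveral} reduces to $\gamma \starr \id$, which holds since $\id$ is the identity).

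For the inductive step, assume $\ell \geq 2$ and put $\alpha' = \alpha_1 \star \cdots \star \alpha_{\ell-1}$. By associativity of $\star$ and additivity of $\chi$ over $\star$ (Theorem~\ref{thm:starExists1}), $\alpha' \star \alpha_\ell = \alpha_1 \star \cdots \star \alpha_\ell \geq_\chi \gamma$. Applying Theorem~\ref{thm:reduce} to the triple $(\alpha', \alpha_\ell, \gamma)$ yields $\beta' = \gamma \tll \alpha_\ell^{-1}$ and $\beta_\ell = (\beta')^{-1} \tlr \gamma$ with $\beta' \starr \beta_\ell = \gamma$, $\beta' \lechi \alpha'$, and $\beta_\ell \lechi \alpha_\ell$. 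Peeling off the rightmost factor is what makes $\beta'$ visibly lie in $G$: it equals $\gamma \tll \alpha_\ell^{-1}$ with $\gamma, \alpha_\ell^{-1} \in G$ and $G$ closed under $\tll$; and $\beta_\ell \in G$ as well, since closure of a subgroup under $\tll$ forces closure under $\tlr$ through the identity $x \tlr y = (y^{-1} \tll x^{-1})^{-1}$ of Theorem~\ref{thm:tllExists}. Now $\beta' \lechi \alpha'$ says $\alpha_1 \star \cdots \star \alpha_{\ell-1} \geq_\chi \beta'$, so the inductive hypothesis applied to $(\alpha_1, \ldots, \alpha_{\ell-1})$ with target $\beta' \in G$ produces a reduced tuple $(\beta_1, \ldots, \beta_{\ell-1})$ with $\beta_i \in G$, $\beta_i \lechi \alpha_i$, and $\beta_1 \star \cdots \star \beta_{\ell-1} = \beta_1 \cdots \beta_{\ell-1} = \beta'$.

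It then remains to check that $(\beta_1, \ldots, \beta_\ell)$ has the desired properties. Membership in $G$ and the bounds $\beta_i \lechi \alpha_i$ hold term by term. Reducedness follows from the recursive criterion (remark after Lemma~\ref{lem:reducedProdSeveral}) taken with split index $\ell - 1$: the prefix $(\beta_1, \ldots, \beta_{\ell-1})$ is reduced by induction, the singleton $(\beta_\ell)$ trivially so, and $(\beta_1 \cdots \beta_{\ell-1}) \starr \beta_\ell$ is precisely the relation $\beta' \starr \beta_\ell$ obtained above. Finally, using associativity together with $\beta_1 \star \cdots \star \beta_{\ell-1} = \beta'$, one gets $\beta_1 \star \cdots \star \beta_\ell = \beta' \star \beta_\ell = \beta' \beta_\ell = (\beta_1 \cdots \beta_{\ell-1}) \beta_\ell = \beta_1 \cdots \beta_\ell = \gamma$. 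The only point that needs care is the one flagged above — keeping all of the auxiliary permutations inside $G$ — and this is exactly why one strips $\alpha_\ell$ off the right (so that Theorem~\ref{thm:reduce} feeds $\tll$ a second argument lying in $G$) rather than $\alpha_1$ off the left, which would instead demand closure of $G$ under $\star$.
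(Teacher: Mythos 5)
Your proof is correct, and it follows the same overall skeleton as the paper's (induction on $\ell$, reduce one factor using Theorem~\ref{thm:reduce}, conclude by the recursive reducedness criterion). The genuine difference is that you peel off the \emph{rightmost} factor $\alpha_\ell$, whereas the paper peels off the \emph{leftmost} factor $\alpha_1$ and applies Theorem~\ref{thm:reduce} to the pair $(\alpha_1,\ \alpha_2 \star \cdots \star \alpha_\ell)$. This choice is not cosmetic: as you correctly flag at the end, the paper's version feeds $\beta := \alpha_2 \star \cdots \star \alpha_\ell$ into Theorem~\ref{thm:reduce}, and this $\beta$ need not lie in $G$ when $G$ is only assumed closed under $\tll$; making the resulting $\beta_1 = \gamma \tll \beta^{-1}$ visibly belong to $G$ then requires an extra (unstated) argument, e.g.\ iterating the identity $s \tll (t \star u) = (s \tll t) \tll u$ from Lemma~\ref{lem:sfAlgebra} so that $\gamma \tll (\alpha_\ell^{-1} \star \cdots \star \alpha_2^{-1})$ becomes a chain of $\tll$'s each staying inside $G$. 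Your orientation sidesteps this entirely: the second argument of $\tll$ is always a single $\alpha_i^{-1} \in G$, and $G$-membership of $\beta'$ and then $\beta_\ell$ is immediate from closure under $\tll$, closure under inversion, and the identity $x \tlr y = (y^{-1} \tll x^{-1})^{-1}$. So your version is the tighter one under the stated hypothesis; the paper's is fine but silently requires the chain argument (or the stronger hypothesis that $G$ be downward-closed, in which case closure under $\star$ is free by Lemma~\ref{lem:dClosed}).
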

\hfill\leanlink{Reduction.reduceList}

\begin{proof}
By induction on $\ell$. The base case $\ell = 1$ is tautological. If $\ell \geq 2$, then Theorem \ref{thm:reduce} implies that there exist $\beta_1, \pi \in G$ with $\beta_1 \starr \pi = \gamma$, $\beta_1 \lechi \alpha_1$, and $\alpha_2 \star \cdots \star \alpha_\ell \geq_\chi \pi$. By inductive hypothesis, there exist $\beta_2 \lechi \alpha_2, \cdots, \beta_\ell \lechi \alpha_\ell$ such that $(\beta_2, \cdots, \beta_\ell)$ reduced and $\beta_2 \star \cdots \star \beta_\ell = \beta_2 \cdots \beta_\ell = \pi$. Then $\beta_1 \star \cdots \star \beta_\ell = \gamma$, and $(\beta_1, \cdots, \beta_\ell)$ is reduced by criterion (3) of Lemma \ref{lem:reducedProdSeveral}.
\end{proof}

\section{Bounded-difference permutations and the essential set}
\label{sec:ess}

Having proved our main theorems, this section addresses a somewhat different question that will prove useful in applications: when comparing two permutations in Bruhat order, or more generally two slipfaces, does it suffice to check the inequality $s(a,b) \leq t(a,b)$ for some small subset of $\ZZ \times \ZZ$? There is a standard tool for this in $S_d$, called the \emph{essential set}, introduced in \cite{fultonSchubert} for applications to degeneracy loci, which we adapt herein to $\slip$ and $\asp$. Unfortunately, a subtle finiteness issue restricts the use of this tool to certain slipfaces, which we call \emph{Clifford slipfaces.} In the submodular case, these correspond to bounded-difference permutations, which we define below.

\subsection{The essential set of a slipface}
\label{ssec:ess}

The key definition is the following.

\begin{defn}
\label{defn:essSF}
For any $s \in \slip$, the \emph{essential set} of $s$ is
$$\ess(s) = \{ (a,b) \in \ZZ^2: 
s(a-1,b) < s(a,b) = s(a+1,b) \mbox{ and } s(a,b+1) < s(a,b) = s(a,b-1) \}.$$
\end{defn}

As in the theory of degeneracy loci, the purpose of $\ess(s)$ is to identify a smaller, often finite, set on which to verify inequalities $s(a,b) \leq t(a,b)$. The basic observation is

\begin{lemma}
\label{lem:essSetMoves}
Suppose that $s,t$ are slipfaces such that $s \not\leq t$, and let $E = \{ (a,b): s(a,b) > t(a,b) \}$ be the set witnessing this fact. If $(a,b) \in E$ and $(a,b) \not\in \ess(s)$, then there exists $(a',b') \in \{ (a-1,b), (a+1,b), (a,b-1), (a,b+1) \}$
such that $(a',b') \in E$ and
$s(a',b') + s^\vee(b',a') > s(a,b) + s^\vee(b,a).$
\hfill\leanlink{SlipFace.ess_step}
\end{lemma}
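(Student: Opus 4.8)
The plan is to analyze what it means for $(a,b) \in E$ to fail to lie in $\ess(s)$. By Definition \ref{defn:essSF}, $(a,b) \in \ess(s)$ requires two things simultaneously: that $s(a-1,b) < s(a,b) = s(a+1,b)$ (call this the ``horizontal'' condition, since it concerns the first coordinate) and that $s(a,b+1) < s(a,b) = s(a,b-1)$ (the ``vertical'' condition). If $(a,b) \notin \ess(s)$, then at least one of these two fails. Using criterion (S1), each condition is governed by two $\{0,1\}$-valued finite differences: the horizontal condition says $s(a,b) - s(a-1,b) = 1$ and $s(a+1,b) - s(a,b) = 0$, while the vertical condition says $s(a,b-1) - s(a,b) = 0$ and $s(a,b) - s(a,b+1) = 1$. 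The idea is that whichever condition fails, it tells us that we may move one step in an appropriate direction while keeping $s$ weakly larger, and moreover that by Equation \eqref{eq:saDuality} (in the form of Definition \ref{defn:sdual}, $s(a,b) - s^\vee(b,a) = \chi_s + a - b$) we can track what happens to $s^\vee(b,a)$ as well, to ensure the strict increase of the sum.

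**The case analysis.** I would organize the proof around the failure of the horizontal condition; the vertical case is entirely symmetric (indeed it follows by applying the horizontal case to $s^\vee$, using $\ess(s^\vee) = \{(b,a) : (a,b) \in \ess(s)\}$, which follows from \eqref{eq:DeltasDual}). If the horizontal condition fails, then either (i) $s(a,b) - s(a-1,b) = 0$, i.e.\ $s(a-1,b) = s(a,b)$, or (ii) $s(a+1,b) - s(a,b) = 1$, i.e.\ $s(a+1,b) = s(a,b) + 1$. In case (i), set $(a',b') = (a-1,b)$; then $s(a',b') = s(a,b) > t(a,b) \geq t(a',b')$ (monotonicity of $t$ in the first coordinate via (S1) gives $t(a-1,b) \leq t(a,b)$), so $(a',b') \in E$; we have $s(a',b') = s(a,b)$, and by \eqref{eq:saDuality}, $s^\vee(b',a') = s^\vee(b,a-1) = s(a-1,b) - \chi_s - (a-1) + b = s(a,b) - \chi_s - a + b + 1 = s^\vee(b,a) + 1 > s^\vee(b,a)$, giving the strict increase of the sum. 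In case (ii), set $(a',b') = (a+1,b)$; then $s(a',b') = s(a,b) + 1 > s(a,b) > t(a,b)$, and $t(a+1,b) \geq t(a,b)$... wait, this needs care: we need $(a+1,b) \in E$, i.e.\ $s(a+1,b) > t(a+1,b)$. Since $s(a+1,b) = s(a,b) + 1$ and $t(a+1,b) \leq t(a,b) + 1$, and $s(a,b) > t(a,b)$ means $s(a,b) \geq t(a,b) + 1$, we get $s(a+1,b) = s(a,b) + 1 \geq t(a,b) + 2 > t(a,b) + 1 \geq t(a+1,b)$, so indeed $(a+1,b) \in E$. Here $s(a',b') = s(a,b) + 1 > s(a,b)$, and $s^\vee(b',a') = s^\vee(b,a+1) = s(a+1,b) - \chi_s - (a+1) + b = (s(a,b)+1) - \chi_s - a - 1 + b = s^\vee(b,a)$, so again the sum strictly increases (this time via the $s$-term).

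**The main obstacle.** The bookkeeping is routine once set up, but the one point requiring genuine attention is verifying in each case that the new point $(a',b')$ actually lies in $E$, not merely that $s$ is large there — this is where one must carefully combine the strict inequality $s(a,b) > t(a,b)$ (which, over $\ZZ$, means $s(a,b) \geq t(a,b)+1$) with the $\{0,1\}$-bounds on the finite differences of $t$ coming from (S1). The cases where we move in the direction of \emph{increasing} $s$ (case (ii) horizontally, and its vertical analog) are the delicate ones, since there we are comparing a value of $s$ that went up by $1$ against a value of $t$ that might also have gone up by $1$; the integrality of the strict inequality is exactly what saves the argument. I would also note that we should double-check the vertical case produces the claimed direction: if the vertical condition fails, either $s(a,b+1) = s(a,b)$, in which case $(a',b') = (a,b+1)$ works (using $t(a,b+1) \leq t(a,b)$, and $s^\vee(b+1,a) = s^\vee(b,a) + 1$ by \eqref{eq:saDuality}), or $s(a,b-1) = s(a,b) + 1$, in which case $(a',b') = (a,b-1)$ works by the integrality argument above. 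In all four sub-cases, exactly one of $s(a',b')$ or $s^\vee(b',a')$ strictly increases while the other stays equal, which gives all four required inequalities at once. Finally, I should remark that whichever of the horizontal/vertical conditions fails, the argument only ever needs that \emph{one} of them fails, so the disjunction $(a,b) \notin \ess(s)$ is exactly what is used, completing the proof.
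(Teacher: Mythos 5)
Your proposal is correct and is, in substance, the paper's argument carried out by explicit casework on which half of the essential-set condition fails. The paper streamlines this by introducing the auxiliary function $f_s(a,b) = s(a,b) + s^\vee(b,a) + \chi_s = 2s(a,b) - a + b$: moving to any of the four neighbors changes $f_s$ by exactly $\pm 1$; the hypothesis $(a,b)\notin\ess(s)$ is precisely the statement that some neighbor increases $f_s$; and since $f_t$ also changes by $\pm 1$ at the same step, $f_s > f_t$ (equivalent to $s>t$) persists at that neighbor. This parity trick subsumes in one stroke both the sum-increase conclusion and your careful integer-arithmetic check that $(a',b')\in E$, which you correctly flag as the delicate point.
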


\begin{proof}
Define $f_s(a,b) = s(a,b) + s^\vee(b,a) + \chi_s$; equivalently $f_s(a,b) = 2 s(a,b) - a + b$. Then $s(a,b) > t(a,b)$ if and only if $f_s(a,b) > f_t(a,b)$. When one of $a,b$ is increased or decreased by one, one of $s(a,b), s^\vee(b,a)$ increases or decreases by $1$, while the other stays the same. So for any $(a',b')$ in the stated set, $f_s(a',b') = f_s(a,b) \pm 1$, and in the case $f_s(a',b') = f_s(a,b) +1$ we have $s(a',b') \geq s(a,b)$ and $s^\vee(b',a') \geq s^\vee(b,a)$. The assumption $(a,b) \not\in \ess(s)$ means precisely that $f_s(a',b') = f_s(a,b) + 1$ for some $(a',b')$ in the stated set. For such $(a',b')$, $f_t(a',b') = f_t(a,b) \pm 1$ implies that $f_s(a',b') > f_t(a',b')$, so $s(a',b') > t(a',b')$ and $(a',b') \in E$.
\end{proof}

Lemma \ref{lem:essSetMoves} \emph{almost} implies that $s \leq t$ if and only if $s(a,b) \leq t(a,b)$ for all $(a,b) \in \ess(s)$. The only catch is that the process of $(a,b)$ by $(a',b')$ might not terminate. A quick, if perhaps slightly unsatisfying, fix is simply to restrict the classes of slipfaces $s$ considered. Fortunately this restriction still encompasses all slipfaces that occur in our intended applications. The name \emph{Clifford} refers to Clifford's theorem from the theory of algebraic curves.

\begin{defn} \label{defn:cliffordSF}
A \emph{Clifford slipface} is a slipface $s$ such that the following set is bounded above.
$$\{ s(a,b) + s^\vee(b,a):\ s(a,b) > 0 \mbox{ and } s^\vee(b,a) > 0 \}$$
\end{defn}

\begin{prop}
\label{prop:essSF}
If $s$ is a Clifford slipface and $t$ is any slipface, then $s \leq t$ if and only if $\chi_s \le \chi_t$ and $s(a,b) \leq t(a,b)$ for all $(a,b) \in \ess(s)$.
\hfill\leanlink{SlipFace.ess_clifford}
\end{prop}

\begin{proof}
First suppose that $s \le t$. It suffices to check that $\chi_s \le \chi_t$. This follows from the fact that for $a \gg 0$, $s(a,0) = a + \chi_s$ and $t(a,0) = a + \chi_t$.

Now suppose that $s \not\leq t$. Define $E$ be the set of $(a,b)$ witnessing this fact, as in Lemma \ref{lem:essSetMoves}. 
If $E \cap \ess(s)$ is nonempty we are done, so assume that $E$ is disjoint from $\ess(s)$.
Lemma \ref{lem:essSetMoves} implies that $E$ has elements $(a,b)$ or arbitrarily large values of $s(a,b) + s^\vee(b,a)$. Since $s$ is Clifford, there exists $(a,b) \in E$ such that $s(a,b) = 0$ or $s^\vee(b,a) = 0$. Since $s(a,b) > t(a,b) \ge 0$, this means $s^\vee(b,a) = 0$. Therefore $s(a,b) = a - b + \chi_s$. Since $t(a,b) \ge a - b + \chi_t$, it follows that $\chi_s > \chi_t$.
\end{proof}

\begin{eg}
The slipface $s(a,b) = s_{\iota_\chi}(a,b) = \max \{ \chi+a-b,0 \}$ has $\ess(s) = \emptyset$. So Proposition \ref{prop:essSF} recovers again the fact that $s$ is the Bruhat-minimal slipface of shift $\chi_s$.
\end{eg}

\subsection{The essential set of a permutation}
Now consider the submodular case.

\begin{defn}
\label{defn:essPerm}
Let $\alpha \in \asp$. The \emph{essential set} of $\alpha$ is
$$\ess(\alpha) = \{ (a,b) \in \ZZ^2: \alpha^{-1}(a-1) \geq b > \alpha^{-1}(a),\ \alpha^{}(b-1) \geq a > \alpha^{}(b) \}.$$
\end{defn}

Equations \eqref{eq:b+1} and \eqref{eq:a+1} imply that 
\leanEquation{eq:essPermSa}{%
\ess(\sa) = \ess(\alpha).}{\doclink{AspPerm.ess_asp_eq_ess_sf}}
The expression in Definition \ref{defn:essPerm} matches the definition in \cite{fultonSchubert}, used in the study of degeneracy loci. For this to be useful, we must understand which $\alpha \in \asp$ have Clifford slipfaces $\sa$. If there is an integer $M$ such that $|\alpha(n) - n| \leq M$ for all $n \in \ZZ$, we will say that $\alpha$ \emph{has bounded difference}.

\begin{prop}
\label{prop:cliffordPerms}
Let $\alpha \in \asp$. The following are equivalent.
\hfill \leanbox{\doclink{AspPerm.bdiff_iff_width}\;\doclink{AspPerm.bdiff_iff_clifford}}
\begin{enumerate}
\item The permutation $\alpha$ has bounded difference.
\item There exists $N \in \ZZ$ such that $\sa(a,b) = \max \{\ca + a - b, 0\}$ whenever $|a-b| \geq N$.
\item The slipface $\sa$ is Clifford, i.e. $\{ \sa(a,b) + \sai(b,a):\  \sa(a,b) >0 \mbox{ and } \sai(b,a) > 0 \}$ is bounded.
\end{enumerate}
\end{prop}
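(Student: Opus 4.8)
The plan is to prove the cycle $(1)\Rightarrow(3)\Rightarrow(2)\Rightarrow(1)$, working throughout with the finite-difference identities \eqref{eq:b+1}--\eqref{eq:Deltasa} and the duality \eqref{eq:saDuality}. In particular I will repeatedly invoke the consequence of \eqref{eq:saDuality} already recorded in the text: $\sa(a,b)=\max\{0,\ca+a-b\}$ if and only if $\sa(a,b)=0$ or $\sai(b,a)=0$. Write $C=\sup_n|\alpha(n)-n|$, which is finite exactly when $(1)$ holds, and note that $\{|\alpha(n)-n|:n\in\ZZ\}=\{|\alpha^{-1}(m)-m|:m\in\ZZ\}$.

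For $(1)\Rightarrow(3)$ I would bound $\sa$ and $\sai$ directly. If $\sa(a,b)>0$, every $k$ counted by $\sa(a,b)$ satisfies $k\geq b$ and $k-C\leq\alpha(k)<a$, hence $b\leq k<a+C$, so $\sa(a,b)\leq(a-b)+C$; the same argument applied to $\alpha^{-1}$ gives $\sai(b,a)\leq(b-a)+C$ when $\sai(b,a)>0$. Adding these yields $\sa(a,b)+\sai(b,a)\leq 2C$ on the entire set appearing in the definition of a Clifford slipface. For $(3)\Rightarrow(2)$, suppose $\sa(a,b)+\sai(b,a)\leq B$ whenever both summands are positive. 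Setting $x=\sa(a,b)$ and $y=\sai(b,a)$, duality gives $x-y=\ca+a-b$, so when $x,y\geq 1$ one has $x+y=|x-y|+2\min(x,y)\geq|\ca+a-b|+2\geq|a-b|-|\ca|+2$. Hence if $|a-b|\geq B+|\ca|+1$ the quantities $\sa(a,b)$ and $\sai(b,a)$ cannot both be positive, so $\sa(a,b)=\max\{0,\ca+a-b\}$ there; this is $(2)$ with $N=B+|\ca|+1$.

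The implication $(2)\Rightarrow(1)$ is where I expect the only real care to be needed. Assume $\sa(a,b)=\max\{0,\ca+a-b\}$ whenever $|a-b|\geq N$. Fix $n$ and set $m=\alpha(n)$; if $|m-n|\leq N$ then $|\alpha(n)-n|\leq N$, so assume $|m-n|\geq N+1$. One checks that the four arguments $(m+1,n),(m,n),(m+1,n+1),(m,n+1)$ occurring in $\Delta\sa(m,n)$ all have coordinate difference of absolute value at least $N$ (this is exactly why the one-step margin is imposed), so $\sa$ at each equals $\max\{0,\ca+(\text{coordinate difference})\}$. Writing $e=\ca+m-n$ and expanding, $\Delta\sa(m,n)=\max\{0,e+1\}-2\max\{0,e\}+\max\{0,e-1\}$, which equals $1$ when $e=0$ and $0$ otherwise. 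But $\Delta\sa(m,n)=\delta(\alpha(n)=m)=1$ by \eqref{eq:Deltasa}, forcing $e=0$, i.e.\ $\alpha(n)-n=-\ca$. Hence $|\alpha(n)-n|\leq\max\{N,|\ca|\}$ for every $n$, which is $(1)$.

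The whole argument is elementary; the single delicate point is tracking in $(2)\Rightarrow(1)$ that incrementing $a$ or $b$ does not push a relevant point out of the region $|a-b|\geq N$ on which $\sa$ has its stable closed form, which is handled by the one-step safety margin in the case hypothesis. The two inequalities powering $(1)\Rightarrow(3)$ and $(3)\Rightarrow(2)$ are immediate from the definition of $\sa$ and from duality, respectively.
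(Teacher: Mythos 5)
Your proof is correct, and it takes a genuinely different route than the paper's. The paper factors the proposition through an auxiliary lemma (Lemma~\ref{lem:malpha}) that establishes equality of the three suprema $M_\alpha = \sup\{\sai(b,a) : \sa(a,b)>0\}$, $M_\alpha' = \sup\{n-\alpha(n):n\in\ZZ\}-\ca$, and $M_\alpha'' = \sup\{b-a : \sa(a,b)>0\} - \ca + 1$, and then shows that each of conditions (1)--(3) is equivalent to both $M_\alpha$ and $M_{\alpha^{-1}}$ being finite, with the two-sided bound $1+\max\{M_\alpha,M_{\alpha^{-1}}\}\leq C_\alpha\leq M_\alpha+M_{\alpha^{-1}}$ handling (3). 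You instead close the cycle $(1)\Rightarrow(3)\Rightarrow(2)\Rightarrow(1)$ by direct estimates: a pigeonhole count $\sa(a,b)\leq(a-b)+C$ and its dual for $(1)\Rightarrow(3)$; the identity $x+y=|x-y|+2\min(x,y)$ combined with the duality \eqref{eq:saDuality} for $(3)\Rightarrow(2)$; and for $(2)\Rightarrow(1)$ substituting the closed form of $\sa$ into the second-difference identity $\Delta\sa(m,n)=\delta(\alpha(n)=m)$ from \eqref{eq:Deltasa}, where the one-step margin ($|m-n|\geq N+1$ rather than $\geq N$) you flag does indeed keep all four evaluations in the stable region. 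I checked each step and the argument is sound. Your version is more elementary and self-contained; what the paper's route buys is the separate quantitative content of Lemma~\ref{lem:malpha}, which records exact identities among the suprema rather than just implications, though the paper does not in fact reuse that lemma elsewhere.
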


\begin{lemma}
\label{lem:malpha}
Let $\alpha \in \asp$. The following are equal.
\hfill \leanbox{\doclink{AspPerm.M'_eq_M}\;\doclink{AspPerm.M''_eq_M}}
\begin{eqnarray}
\label{eq:supsai}
M_\alpha &=& \sup \{ \sai(b,a):\ \sa(a,b) > 0 \}\\
\label{eq:supdiff}
M_\alpha' &=& \sup \{ n - \alpha(n):\ n \in \ZZ \} - \ca\\
\label{eq:supba}
M_\alpha'' &=& \sup \{ b-a:\ \sa(a,b) > 0 \} -\ca + 1
\end{eqnarray}
\end{lemma}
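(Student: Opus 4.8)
The plan is to establish the cyclic chain $M_\alpha''\le M_\alpha\le M_\alpha'\le M_\alpha''$. Throughout I will read these quantities in $\ZZ\cup\{+\infty\}$: each supremum is over a nonempty set (for $b\ll 0$ and $a\gg 0$ one has $\sa(a,b)>0$, and $\{n-\alpha(n)\}$ is clearly nonempty), so all three are well defined, and every inequality below remains valid if a value happens to be $+\infty$. The only inputs needed are the counting definition $\sa(a,b)=\#\{n\ge b:\ \alpha(n)<a\}$ from Equation \eqref{eq:sa} and the duality $\sai(b,a)=\sa(a,b)+(b-a)-\ca$ from Equation \eqref{eq:saDuality}.

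I would dispatch the two easy links first. For $M_\alpha''\le M_\alpha$: if $\sa(a,b)>0$ then $\sa(a,b)\ge 1$, so duality gives $\sai(b,a)\ge 1+(b-a)-\ca$, i.e. $(b-a)-\ca+1\le \sai(b,a)$; taking the supremum of both sides over the set $\{(a,b):\ \sa(a,b)>0\}$ yields exactly $M_\alpha''\le M_\alpha$. For $M_\alpha'\le M_\alpha''$: for an arbitrary $n\in\ZZ$ the pair $(a,b)=(\alpha(n)+1,\,n)$ satisfies $\sa(a,b)\ge 1$ (the index $k=n$ is among those counted) and $b-a=n-\alpha(n)-1$, so $M_\alpha''=\sup\{b'-a':\ \sa(a',b')>0\}-\ca+1\ge (n-\alpha(n))-\ca$; taking the supremum over $n$ gives $M_\alpha''\ge M_\alpha'$.

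The remaining link $M_\alpha\le M_\alpha'$ is the one I expect to require genuine care. The difficulty: from a general pair $(a,b)$ with $\sa(a,b)>0$, a witness $n\ge b$ with $\alpha(n)<a$ only delivers $n-\alpha(n)\ge b-a+1$, which bounds $b-a$ but not $\sai(b,a)=\sa(a,b)+(b-a)-\ca$ once $\sa(a,b)\ge 2$. The fix is to first descend in the first coordinate to a point where the slipface value is exactly $1$. Since $\sa(\cdot,b)$ is nonnegative, nondecreasing with steps in $\{0,1\}$ by (S1), and vanishes for the first argument sufficiently negative by (S3), the set $\{a':\ \sa(a',b)\ge 1\}$ is nonempty and bounded below; let $a_0$ be its minimum, so $a_0\le a$ and $\sa(a_0,b)=1$. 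As $\sai(b,\cdot)$ is nonincreasing (again by (S1)), $\sai(b,a)\le \sai(b,a_0)$. Now $\sa(a_0,b)=1$ supplies some $n\ge b$ with $\alpha(n)\le a_0-1$, so $n-\alpha(n)\ge b-a_0+1$, and duality gives $\sai(b,a_0)=1+(b-a_0)-\ca\le (n-\alpha(n))-\ca\le M_\alpha'$. Hence $\sai(b,a)\le M_\alpha'$ for every $(a,b)$ with $\sa(a,b)>0$, so $M_\alpha\le M_\alpha'$ and the chain closes.
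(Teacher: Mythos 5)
Your proof is correct, but it traverses the cyclic chain in the opposite direction from the paper's. The paper establishes $M_\alpha\geq M'_\alpha\geq M''_\alpha\geq M_\alpha$, whereas you establish $M''_\alpha\leq M_\alpha\leq M'_\alpha\leq M''_\alpha$ --- three genuinely different inequalities, though of course yielding the same conclusion. Your link $M''_\alpha\leq M_\alpha$ is a one-line consequence of the duality identity $\sai(b,a)=\sa(a,b)+(b-a)-\ca$; the paper's corresponding link $M''_\alpha\geq M_\alpha$ requires constructing the point $(\alpha(n)+1,n)$ (with $n$ maximal having $\alpha(n)<a$), where $\sa$ takes the value exactly $1$, and then invoking the monotonicity of $\sai$. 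The cost is that the work migrates: your hard link is $M_\alpha\leq M'_\alpha$, which needs essentially the same ``descend to $\sa=1$'' device (your $a_0$) together with the monotonicity of $\sai(b,\cdot)$. Your descent is slightly leaner than the paper's --- you lower only $a$, keeping $b$ fixed, whereas the paper simultaneously raises $b$ --- but both rest on exactly the same two ingredients (duality plus the unit-step and monotonicity structure coming from (S1)). In short, the two proofs are structurally symmetric mirror images, and yours has the modest advantage of a shorter easy step; your explicit handling of possible $+\infty$ values is also a useful bit of rigor that the paper leaves implicit.
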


\begin{proof}
We will prove that $M_\alpha \geq M'_\alpha \geq M''_\alpha \geq M_\alpha$.
First, fix any $n \in \ZZ$. Let 
$$m = \max \{ m \geq n:\ \alpha(m) \leq \alpha(n) \}.$$
Then $\sa(\alpha(m) +1, m) = 1$ and Equation \eqref{eq:saDuality} implies 
$$\sai(\alpha(m)+1,m) = m -  \alpha(m) - \ca.$$
Therefore
$M_\alpha \geq m - \alpha(m) - \ca \geq n - \alpha(n) - \ca.$
This shows that $M'_\alpha \leq M_\alpha$.

Next, fix any $a,b \in \ZZ$ such that $\sa(a,b) > 0$. Choose the maximum $n \geq b$ such that $\alpha(n) < a$. Then $b-a + 1 \leq n - \alpha(n) \leq M'_\alpha$. This implies $M''_\alpha \leq M'_\alpha$. On the other hand, choosing $a' = \alpha(n) + 1$ and $b'= n$, we have $\sa(a',b') = 1$ (by maximality of $n$) so $\sai(b',a') = 1-\ca + b'-a' \leq M''_\alpha$. This implies that $\sai(b,a) \leq \sai(b',a') \leq M''_\alpha$. Hence $M_\alpha \leq M''_\alpha$.
\end{proof}

\begin{proof}[Proof of Proposition \ref{prop:cliffordPerms}]
By Equation \eqref{eq:saDuality}, statement (2) is equivalent to saying that $\sa(a,b) = 0$ if $b-a \gg 0$ and $\sai(b,a) = 0$ if $a-b \gg 0$. By Lemma \ref{lem:malpha}, both statements (1) and (2) are equivalent to saying that both $M_\alpha$ and $M_{\alpha^{-1}}$ are finite. Now consider statement (3). 
The proposition is clear when $\alpha = \iota_{\ca}$, so assume $\alpha$ has at least one inversion. Define
$$C_\alpha = \sup \{\sa(a,b) + \sai(b,a):\ \sa(a,b) > 0 \mbox{ and } \sai(b,a) > 0 \}.$$
So statement (3) is equivalent to $C_\alpha < \infty$.
Observe that we have bounds 
$$1+ \max \{ M_\alpha, M_{\alpha^{-1}} \} \leq C_\alpha \leq M_{\alpha} + M_{\alpha^{-1}}.$$
So $C_\alpha$ is finite if and only if both $M_\alpha$ and $M_{\alpha^{-1}}$ are finite.
\end{proof}

\begin{cor}
\label{cor:essBD}
If $\alpha, \beta \in \asp$ have the same shift and $\alpha$ has bounded difference, then $\alpha \leq \beta$ if and only if $\sa(a,b) \leq \sbe(a,b)$ for all $(a,b) \in \ess(\alpha)$.
\hfill \leanlink[Submodular]{AspPerm.ess_bdiff}
\end{cor}

\begin{eg}
\label{eg:essPerm}
Observe that if $\alpha \in S_d$, then $\alpha$ has bounded difference. Both $\alpha$ and $\alpha^{-1}$ are increasing on $(-\infty,1] \cap \ZZ$ and $[d, \infty) \cap \ZZ$, so
$$\ess(\alpha) \subseteq \{ 2, \cdots, d \} \times \{ 2, \cdots, d \}.$$
This is not surprising: for a permutation moving only finitely many values, the Bruhat order depends on only a finite subset of $\ZZ^2$.
\end{eg}

\begin{eg}
\label{eg:bnPerm} 
Fix two nonnegative integers $m,n$. Consider the permutation $\gamma = \gamma^m_n$ depicted in Figure \ref{fig:bnPerm}. 
More precisely, $\gamma$ is the unique permutation mapping $(-\infty,-m-1] \cap \ZZ$ to $(-\infty, -n] \cap \ZZ$, $[-m,-1] \cap \ZZ$ to $[1,m] \cap \ZZ$, $[0,n-1] \cap \ZZ$ to $[-n+1, 0] \cap \ZZ$, and $[n,\infty) \cap \ZZ$ to $[m+1, \infty) \cap \ZZ$, each via the order-preserving bijection.

Examining the adjacent inversions of $\gamma$ shows that 
$$\ess(\gamma) = \{ (1,0) \}.$$
Furthermore, $s_\gamma(1,0) = n$ and $s_{\gamma^{-1}}(0,1) = m$, so $\chi_\gamma = n-m-1$. The set $\{ \gamma(n)-n:\ n \in \ZZ\}$ is finite, so it is certainly bounded, and $s_\gamma$ is Clifford. It follows that for any $\alpha \in \asp$ of shift $\chi_\gamma$,
$$ \sa(1,0) \geq n \mbox{ if and only if } \alpha \geq \gamma.$$
In a similar manner, any inequality of the form $\sa(a,b) \geq N$ may be expressed equivalently by a Bruhat inequality $\alpha \geq \gamma$ for some $\gamma$ of the same shift as $\alpha$. This construction is useful in the intended applications to curves and graphs; see Appendix \ref{app:curvesGraphs}.
\end{eg}

\begin{figure}
\begin{tikzpicture}[scale=0.2]
\fill[lightgray] (-0.5,0.5) rectangle (8,-8);
\fill[lightgray] (-0.5,0.5) rectangle (-8,8);

\draw[->] (-8,0) -- (8,0);
\draw[->] (0,-8) -- (0,8);

\node [ circle, inner sep=1pt] at (-3,-3) {};
\node [fill=black, circle, inner sep=1pt] at (-6,-7) {};
\node [fill=black, circle, inner sep=1pt] at (-5,-6) {};
\node [fill=black, circle, inner sep=1pt] at (-4,-5) {};
\node [fill=black, circle, inner sep=1pt] at (-3,1) {};
\node [fill=black, circle, inner sep=1pt] at (-2,2) {};
\node [fill=black, circle, inner sep=1pt] at (-1,3) {};
\node [fill=black, circle, inner sep=1pt] at (0,-4) {};
\node [fill=black, circle, inner sep=1pt] at (1,-3) {};
\node [fill=black, circle, inner sep=1pt] at (2,-2) {};
\node [fill=black, circle, inner sep=1pt] at (3,-1) {};
\node [fill=black, circle, inner sep=1pt] at (4,0) {};
\node [fill=black, circle, inner sep=1pt] at (5,4) {};
\node [fill=black, circle, inner sep=1pt] at (6,5) {};
\node [fill=black, circle, inner sep=1pt] at (7,6) {};

\draw[decoration={brace,mirror,raise=2pt},decorate]
  (8,-8) -- (8,0.5) node[midway,right] {$\ s_{\gamma}(1,0) = n$};
  
\draw[decoration={brace,mirror,raise=2pt},decorate] (-8,8) -- (-8,0.5) node[midway, left] {$s_{\gamma}(0,1) = m\ $};

\end{tikzpicture}

\caption{
The permutation $\gamma^m_n$ from Example \ref{eg:bnPerm}. The specific example shown is $\gamma^3_5$.
}
\label{fig:bnPerm}
\end{figure}
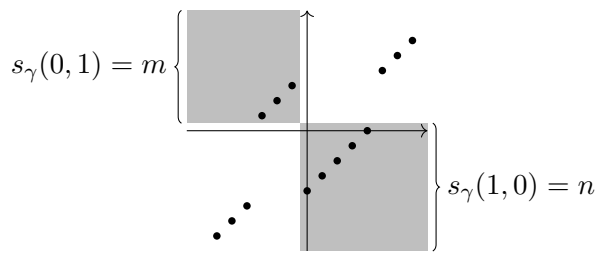

One additional benefit of bounded-difference permutations is that we can state a slightly simplified criterion for checking that a given function is the slipface function of a bounded-difference permutation (or equivalently, a submodular Clifford slipface). 

\begin{prop}
\label{prop:alphaExists}
A function $s: \ZZ^2 \to \ZZ$ is the slipface of a bounded-difference permutation (equivalently, a submodular Clifford slipface) if and only if the following two criteria hold.
\begin{enumerate}
\item There exist integers $M, \chi$ such that $a - b \leq -M$ implies $s(a,b) = 0$ and $a-b \geq M$ implies $s(a,b) = \chi + a - b$.
\item For all $a,b \in \ZZ$, $s(a+1,b) - s(a,b) - s(a+1,b+1) + s(a,b+1) \geq 0$ ($s$ is \emph{submodular}).
\end{enumerate}
The shift of $\alpha$ is the number $\chi$ mentioned in criterion (1).
\end{prop}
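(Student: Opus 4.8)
The plan is to prove the two implications separately, using Propositions~\ref{prop:imageASP} and~\ref{prop:cliffordPerms} to pass between ``submodular Clifford slipface'' and ``slipface of a bounded-difference permutation.''

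For the forward direction, suppose $s = \sa$ with $\alpha$ of bounded difference. Then $s$ is submodular by Proposition~\ref{prop:imageASP} (equivalently by Equation~\eqref{eq:Deltasa}), which is criterion~(2) with $\chi = \ca$. For criterion~(1), the equivalence (1)$\Leftrightarrow$(2) in Proposition~\ref{prop:cliffordPerms} supplies an $N$ with $\sa(a,b) = \max\{\ca + a - b, 0\}$ whenever $|a-b| \geq N$; enlarging to $M = \max\{N, |\ca|\}$ turns this into the two one-sided formulas of criterion~(1). This direction is just bookkeeping.

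The content is the converse. Given $s\colon \ZZ^2 \to \ZZ$ satisfying (1) and (2), I would first show $s \in \slip_\chi$, then that it is submodular and Clifford. The crux --- and the step I expect to be the main obstacle --- is establishing (S1), since a priori $s$ is merely integer-valued with no control on its differences. The idea: criterion~(2) says precisely that the horizontal difference $g_b(a) = s(a+1,b) - s(a,b)$ is nonincreasing in $b$ and the vertical difference $h_a(b) = s(a,b) - s(a,b+1)$ is nondecreasing in $a$; criterion~(1) forces $g_b(a)$ to equal $1$ for $a - b \gg 0$ and $0$ for $a - b \ll 0$, and likewise $h_a(b)$ to equal $0$ for $a - b \ll 0$ and $1$ for $a - b \gg 0$. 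A monotone, integer-valued function that takes the value $1$ in one regime and $0$ in another must lie in $\{0,1\}$ throughout; this is exactly (S1).

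With (S1) in hand the rest is routine. Monotonicity of $s$ in $b$ together with the vanishing $s(a,b) = 0$ for $b \gg 0$ gives $s \geq 0$; the function $s(a,b) - (\chi + a - b)$ is nonincreasing in $a$ by (S1) and vanishes for $a \gg 0$, hence is $\geq 0$, giving the bound $s(a,b) \geq \chi + a - b$; together these are (S2). For (S3), criterion~(1) combined with (S2) identifies $s(a',b)$ with $\max\{0, \chi + a' - b\}$ for every $a'$ outside the finite window $b - M < a' < b + M$ (and symmetrically in $b'$), so all but finitely many values are forced. Thus $s \in \slip_\chi$, and since $s$ is submodular, Proposition~\ref{prop:imageASP} yields a unique $\alpha \in \asp$ with $s = \sa$ and $\ca = \chi$. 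Finally, the two asymptotic formulas of criterion~(1), reinterpreted via (S2) as $\sa(a,b) = \max\{0, \chi + a - b\}$ for $|a - b| \geq M$, are statement~(2) of Proposition~\ref{prop:cliffordPerms}, so $\sa$ is Clifford and $\alpha$ has bounded difference, with shift $\chi$.
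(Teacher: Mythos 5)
Your proposal is correct and follows essentially the same line of reasoning as the paper's proof: the forward direction is bookkeeping through Propositions~\ref{prop:imageASP} and~\ref{prop:cliffordPerms}, and the converse exploits the fact that submodularity makes the first finite differences monotone (in the appropriate variable), anchored by the two boundary formulas from criterion~(1). The only organizational difference is that the paper routes the verification through Lemma~\ref{lem:dualCrit}, checking (D1)--(D2) for $s$ and for its would-be dual $t(b,a)=s(a,b)-\chi-a+b$, whereas you verify (S1)--(S3) directly by observing that each difference $g_b(a)$, $h_a(b)$ is a monotone $\ZZ$-valued function pinned at $0$ on one side and $1$ on the other; these are logically equivalent packagings of the same computation.
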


\begin{proof}
If $s$ is a submodular Clifford slipface, then (2) holds by definition and the boundedness of $M''_\alpha$ and $M''_{\alpha^{-1}}$ implies (1). Conversely, suppose that $s$ is a function satisfying these two criteria.
We first check that $s$ is a slipface using Lemma \ref{lem:dualCrit}. 
Submodularity implies that the first finite differences $s(a+1,b) - s(a,b)$ and $s(a,b) - s(a,b+1)$ are nondecreasing in $a$ and nonincreasing in $b$. Criterion $(1)$ implies that both differences are $0$ for $a-b$ sufficiently small, so both differences are nonnegative for all $a,b$, and achieve $0$ in each row and column. This implies (D1) and (D2) for $s$. The same argument applies to $s^\vee$. So $s$ is a submodular slipface, hence $s = \sa$ for a unique permutation $\alpha \in \asp$. Proposition \ref{prop:cliffordPerms} implies that $\alpha$ has bounded difference.
\end{proof}

\section{\texorpdfstring{Some subgroups of $\asp$ closed under $\star$ and $\resL$}{Some subgroups of asp closed under Demazure product and residual}}
\label{sec:subgroups}

We conclude this paper by examining a few types of commonly-encountered subgroups of $\asp$ that are closed under $\star$ and $\resL$ (and therefore also $\resR$), and pointing out some notable special features on the operations on these groups. Several of these examples have the following feature, which is a convenient way to verify closure under $\star$ and $\resL$.

\begin{defn}
Call a subgroup $G \leq \asp$ \emph{downward-closed} if for all $\alpha, \beta \in \asp$, if $\alpha \lechi \beta$ and $\beta \in G$, then $\alpha \in G$ as well.
\end{defn}

\begin{lemma}
\label{lem:dClosed}
If $G$ is downward-closed, then it is closed under $\star$ and $\resL$.
\end{lemma}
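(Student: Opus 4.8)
The plan is to reduce closure under $\star$ and $\tll$ to the greedy and stingy characterizations already established, namely Theorem~\ref{thm:starGreedy} and Theorem~\ref{thm:tllStingy}. Each of those theorems exhibits $\alpha\star\beta$ (respectively $\alpha\tll\beta^{-1}$) as an \emph{attained} extremum in Bruhat order, realized by an ordinary product of one of $\alpha,\beta$ with a permutation that is $\lechi$-comparable to the other; downward-closure is exactly the hypothesis that lets us pull that witness into $G$, after which the conclusion is immediate because $G$ is a subgroup.

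Concretely, fix $\alpha,\beta\in G$. For $\star$: Equation~\eqref{eq:starGreedyBeta} of Theorem~\ref{thm:starGreedy} asserts that $\max\{\alpha\beta_1:\beta_1\lechi\beta\}$ exists and equals $\alpha\star\beta$, so there is a particular $\beta_1\lechi\beta$ with $\alpha\star\beta=\alpha\beta_1$. Since $\beta\in G$ and $G$ is downward-closed, $\beta_1\in G$, and hence $\alpha\star\beta=\alpha\beta_1\in G$. For $\tll$: apply Equation~\eqref{eq:tllGreedyBeta} of Theorem~\ref{thm:tllStingy} with $\beta$ replaced by $\beta^{-1}$ (legitimate since $\beta^{-1}\in G$), obtaining a $\beta_1\lechi\beta^{-1}$ with $\alpha\tll\beta=\alpha\beta_1^{-1}$; again $\beta_1\in G$ by downward-closure, so $\beta_1^{-1}\in G$ and $\alpha\tll\beta=\alpha\beta_1^{-1}\in G$. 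Closure under $\tlr$ is not literally part of the stated lemma, but it follows formally from the identity $\alpha\tlr\beta=(\beta^{-1}\tll\alpha^{-1})^{-1}$ recorded in Theorem~\ref{thm:tllExists}.

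I expect no real obstacle. The only point needing a moment's attention is that the cited characterizations yield an honest element of $\asp$ rather than a mere supremum or infimum: this is fine because Theorems~\ref{thm:starGreedy} and~\ref{thm:tllStingy} explicitly say the extrema \emph{exist}, i.e. are attained, and because their index sets are phrased in terms of $\lechi$ (same shift), which is precisely the relation appearing in the definition of downward-closed. If one prefers not to quote the packaged statements, the explicit witnesses constructed inside the proofs of those two theorems can be substituted with no change to the argument.
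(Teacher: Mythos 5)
Your argument is correct and mirrors the paper's own proof: the paper likewise quotes Theorems~\ref{thm:starGreedy} and~\ref{thm:tllStingy} to produce an attained witness of the appropriate shift, then invokes downward-closure and the subgroup property. The only cosmetic difference is that the paper cites Equation~\eqref{eq:starGreedyAlpha} (writing $\alpha\star\beta=\alpha_1\beta$ with $\alpha_1\lechi\alpha$) while you cite Equation~\eqref{eq:starGreedyBeta}; both are equally valid.
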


\begin{proof}
Suppose $\alpha, \beta \in G$. 
The ``greedy'' Theorem \ref{thm:starGreedy} implies that $\alpha \star \beta = \alpha_1 \beta$ for some $\alpha_1 \lechi \alpha$. Since $G$ is downward-closed, $\alpha_1 \in G$, hence so is $\alpha \star \beta$. The ``stingy'' Theorem \ref{thm:resLStingy} implies $\alpha \resL \beta = \alpha \beta_1$ where $\beta_1 \lechi \beta$. Then $\beta_1 \in G$, so $\alpha \resL \beta \in G$ as well.
\end{proof} 

\subsection{Permutations of bounded difference}
As in Section \ref{sec:ess}, we say that $\alpha$ has \emph{bounded difference} if there exists an integer $M$ such that $|\alpha(n) - n| \leq M$ for all $n \in \ZZ$. These permutations naturally occur in our application to algebraic curves and graphs (see Appendix \ref{app:curvesGraphs}), where the bound $M$ is determined by the genus, and we have seen that they have the virtue of being more easily compared in Bruhat order by using the essential set.

A permutation has difference bounded by $M$ if and only if $\sa(a-M,a) = \sai(a-M,a) = 0$ for all $a \in \ZZ$. Denote the group of such permutations by $B_M$. Then using again the fact that, if $\ca = \cb$, then $\alpha \leq \beta$ is equivalent to $\alpha^{-1} \leq \beta^{-1}$, it follows that $B_M$ is closed downwards. Thus so is the union of all the $B_M$. So the group of permutations of bounded difference is closed under $\star$ and $\resL$.

In fact, we can say a bit more: if $\alpha \in B_{M_1}$ and $\beta \in B_{M_2}$, then the permutations $\alpha_1, \beta_1$ in the proof of Lemma \ref{lem:dClosed} lie in $B_{M_1}$ and $B_{M_2}$, respectively. It follows that $\alpha \star \beta = \alpha_1 \beta$ and $\alpha \resL \beta = \alpha \beta_1$ both must lie in $B_{M_1+M_2}$. So the bound on the difference cannot grow too quickly under $\star$ and $\resL$.

\subsection{Symmetric groups}

To study symmetric groups, it is useful to first examine somewhat larger groups, of which symmetric groups may be formed as intersections.

\begin{defn}
Let $F_M \subseteq \asp$ consist of all permutations that send $\{ n \in \ZZ: n \geq M \}$ bijectively to itself (and thus also send $\{n \in \ZZ: n < M\}$ bijectively to itself).
\end{defn}

Equivalently, $\alpha \in F_M$ if and only if $\sa(M,M) = \sai(M,M) = 0$. This implies 

\begin{lemma}
For any $M \in \ZZ$, $F_M$ is downward-closed, hence closed under $\star$ and $\resL$.
\end{lemma}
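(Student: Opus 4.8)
The plan is to verify that $F_M$ is downward-closed and then invoke Lemma \ref{lem:dClosed}. The whole argument rests on the characterization recorded just above: $\alpha \in F_M$ if and only if $\sa(M,M) = \sai(M,M) = 0$. The key observation is that this is a pair of \emph{vanishing} conditions on the slipface function, and vanishing of a slipface value is preserved under passing downward in Bruhat order.

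Concretely, suppose $\beta \in F_M$ and $\alpha \lechi \beta$; I must show $\alpha \in F_M$. Since $\alpha \leq \beta$ we have $0 \leq \sa(M,M) \leq \sbe(M,M) = 0$, so $\sa(M,M) = 0$. For the inverse condition, I would use $\chi_\alpha = \chi_\beta$ together with Lemma \ref{lem:bruhatInverse}, which gives $\alpha^{-1} \lechi \beta^{-1}$; then the same inequality applied to inverses yields $0 \leq \sai(M,M) \leq \sbei(M,M) = 0$, so $\sai(M,M) = 0$. By the stated characterization, $\alpha \in F_M$. Hence $F_M$ is downward-closed, and Lemma \ref{lem:dClosed} shows it is closed under $\star$ and $\tll$ (and therefore under $\tlr$ as well, since $\alpha \tlr \beta = (\beta^{-1} \tll \alpha^{-1})^{-1}$ and $F_M$ is a subgroup).

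There is no real obstacle here; the only point worth flagging is that the inverse half of the argument genuinely needs the equal-shift hypothesis built into the definition of ``downward-closed,'' since Lemma \ref{lem:bruhatInverse} fails without $\chi_\alpha = \chi_\beta$ (cf.\ Warning \ref{warning:weakBruhat}), and $F_M$ does contain permutations of various shifts.
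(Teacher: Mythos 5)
Your proof is correct and follows exactly the route the paper leaves implicit: the paper simply records that $\alpha \in F_M$ iff $\sa(M,M) = \sai(M,M) = 0$ and lets the reader see that these vanishing conditions descend under $\lechi$. You have filled in the two required inequalities and, usefully, flagged the one genuine subtlety --- that the inverse half needs $\chi_\alpha = \chi_\beta$ via Lemma~\ref{lem:bruhatInverse}, which is precisely why the definition of downward-closed is phrased with $\lechi$ rather than $\leq$.
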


Now, $S_d = \bigcap_{M \leq 1} F_M \cap \bigcap_{M \geq d+1} F_M$ and therefore

\begin{cor}
The symmetric group $S_d$ is downward-closed, and thus closed under $\star$ and $\resL$.
\end{cor}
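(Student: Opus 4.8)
The plan is to reduce everything to the preceding lemma, that each $F_M$ is downward-closed, together with the displayed factorization $S_d = \bigcap_{M \leq 1} F_M \cap \bigcap_{M \geq d+1} F_M$. First I would record the elementary observation that an arbitrary intersection $G = \bigcap_i G_i$ of downward-closed subgroups of $\asp$ is again a downward-closed subgroup: it is a subgroup because each $G_i$ is, and if $\alpha \lechi \beta$ with $\beta \in G$, then $\beta \in G_i$ for every $i$, so $\alpha \in G_i$ for every $i$ by downward-closedness of $G_i$, hence $\alpha \in G$.

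Next I would justify the factorization of $S_d$ (if it is not taken as already established just above): a permutation $\alpha$ fixes every $n \le 0$ precisely when $\alpha \in F_M$ for all $M \le 1$, since the conditions $\alpha(\{n \ge M\}) = \{n \ge M\}$ for two consecutive values of $M$ force $\alpha$ to fix the intervening integer; symmetrically, $\alpha$ fixes every $n \ge d+1$ precisely when $\alpha \in F_M$ for all $M \ge d+1$. A permutation of $\ZZ$ that fixes everything outside $\{1,\dots,d\}$ is by our conventions exactly an element of $S_d$. Combining this with the first paragraph and the preceding lemma immediately yields that $S_d$ is a downward-closed subgroup of $\asp$.

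Finally, Lemma \ref{lem:dClosed} converts downward-closedness into closure under $\star$ and $\tll$, and closure under $\tlr$ then follows from the identity $\alpha \tlr \beta = (\beta^{-1} \tll \alpha^{-1})^{-1}$ (Theorem \ref{thm:tllExists}) together with the evident fact that $S_d$ is closed under inversion. I do not anticipate any real obstacle here: the only point requiring a moment's care is the verification of the intersection description of $S_d$, which is a direct unwinding of the definition of $F_M$.
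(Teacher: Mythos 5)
Your proposal is correct and follows the paper's own approach: express $S_d$ as the intersection $\bigcap_{M \leq 1} F_M \cap \bigcap_{M \geq d+1} F_M$, use that each $F_M$ is downward-closed (and that downward-closedness is preserved by intersections), and then invoke Lemma \ref{lem:dClosed}. The paper states the factorization in the sentence preceding the corollary and leaves the routine verifications implicit; you simply spell them out.
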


As promised in the introduction, we can conveniently restrict arguments to $\sa$ to the set $[1,d+1]$ when working in $S_d$.

\begin{cor}
\label{cor:SdMin}
If $\alpha, \beta \in S_d$, then for all $1 \leq a,b \leq d+1$,
$$\sab(a,b) = \min_{1 \leq \ell \leq d+1} \Big[ \sa(a,\ell) + \sbe(\ell,b) \Big].$$
\end{cor}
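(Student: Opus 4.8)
The plan is to reduce the statement to the fact that the minimum defining $\sa \star \sbe(a,b)$ --- which equals $\sab(a,b)$ by Theorem~\ref{thm:starExists1} --- is already attained at some index $\ell$ with $1 \leq \ell \leq d+1$. Since the right-hand side of the asserted formula is a minimum over a subset of $\ZZ$, it is automatically $\geq \sab(a,b)$, so only the reverse inequality requires proof.

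First I would apply Lemma~\ref{lem:setL} with $s = \sa$ and $t = \sbe$, which gives
$$\sab(a,b) = \min_{\ell \in L} \Big[ \sa(a,\ell) + \sbe(\ell,b) \Big], \qquad L = \{ \ell \in \ZZ : \beta^{-1}(\ell-1) < b \leq \beta^{-1}(\ell) \}.$$
It then suffices to show $L \subseteq \{1, \dots, d+1\}$ whenever $1 \leq b \leq d+1$. Since $\beta \in S_d$ we also have $\beta^{-1} \in S_d$, so $\beta^{-1}$ fixes every integer outside $[1,d]$. If $\ell \leq 0$, then $\ell-1$ and $\ell$ both lie in $(-\infty,0]$, so $\beta^{-1}(\ell-1) = \ell-1$ and $\beta^{-1}(\ell) = \ell$, and membership in $L$ becomes $\ell - 1 < b \leq \ell$, forcing $b = \ell \leq 0$, contrary to $b \geq 1$. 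Symmetrically, if $\ell \geq d+2$, then $\beta^{-1}(\ell-1) = \ell-1$ and $\beta^{-1}(\ell) = \ell$ force $b = \ell \geq d+2$, contrary to $b \leq d+1$. Hence $L \subseteq \{1, \dots, d+1\}$, and the corollary follows.

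As an alternative that sidesteps Lemma~\ref{lem:setL}, one can bound the excluded terms directly: for $\ell \leq 0$ one has $\sbe(\ell,b) = 0$ (every $n \geq b \geq 1$ satisfies $\beta(n) \geq 1 > \ell$) while $\sa(a,\ell) \geq \sa(a,1)$ because $\sa(a,\ell)$ is nonincreasing in $\ell$; comparing with the index $\ell = 1$, whose $\sbe$-part also vanishes, shows no such term beats the minimum. Dually, for $\ell \geq d+2$ one has $\sa(a,\ell) = 0$ (since $n \geq \ell \geq d+2 > d+1 \geq a$ forces $\alpha(n) = n > a$) while $\sbe(\ell,b) \geq \sbe(d+1,b)$ because $\sbe(a,b)$ is nondecreasing in its first argument; comparing with the index $\ell = d+1$, whose $\sa$-part vanishes, handles those terms.

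There is no real obstacle here: the corollary is essentially a finiteness-bookkeeping statement. The only points needing care are keeping track of which endpoints ($1$ and $d+1$) must be retained in the index set, and remembering to pass from $\beta$ to $\beta^{-1}$ (and to use $\beta^{-1} \in S_d$) when invoking the description of $L$ from Lemma~\ref{lem:setL}.
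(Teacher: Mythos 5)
Your primary argument is correct and is essentially the paper's own proof: both apply Lemma~\ref{lem:setL} with $t = \sbe$ and show $L \subseteq \{1,\dots,d+1\}$ using that $\beta^{-1} \in S_d$ fixes integers outside $[1,d]$ (the paper isolates one inequality of $L$'s definition per endpoint, while you note both force $b=\ell$ out of range, but these are the same observation). Your alternative argument bypassing Lemma~\ref{lem:setL} by directly comparing terms with $\ell \leq 0$ (resp.\ $\ell \geq d+2$) against the endpoint $\ell=1$ (resp.\ $\ell=d+1$) is also correct and a touch more elementary, though it gains nothing here since Lemma~\ref{lem:setL} is already available.
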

\begin{proof}
Define $L$ as in Lemma \ref{lem:setL}, and let $\ell$ be any element of $L$. Since $\beta$, and thus $\beta^{-1}$, fixes all integers greater than $d$, $\beta^{-1}(\ell-1) < b \leq d+1$ implies $\ell -1 \leq d$. Similarly, since $\beta^{-1}$ fixes all nonpositive integers, $\beta^{-1}(\ell) \geq b \geq 1$ implies $\ell \geq 1$. Hence $L \subseteq \{1,\cdots, d+1\}$, and the corollary follows from Lemma \ref{lem:setL}.
\end{proof}

Of course, one often want to work with elements of $S_d$ via their reduced words, i.e. factorization into simple transpositions (or ``bubble-sorts''). We can formulate the necessary tools here somewhat generally, so we do so for later use. In the following statements, recall the notation $\sigma_S$ from Definition \ref{defn:simpleInv} for a product of simple transpositions.

\begin{thm}
\label{thm:alphaStarSigma}
Let $\alpha \in \asp$, and let $S$ be a set with no two consecutive integers. Define $S_1 = \{n \in S: \alpha(n) < \alpha(n+1) \}$ and $S_2 = \{n \in S: \alpha(n) > \alpha(n+1) \}$. Then
\begin{flalign*}
&& \alpha \star \sigma_S &= \alpha \sigma_{S_1}, \mbox{ and } & \leanlink{Transpositions.starSigma}\\
&& \alpha \resL \sigma_S &= \alpha \sigma_{S_2}. & \leanlink{Transpositions.residualSigma}
\end{flalign*}
\end{thm}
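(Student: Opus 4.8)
The plan is to reduce the statement to the single-orbit formulas of Lemma \ref{lem:starTrans}, combined with the reduced-product criteria of Lemmas \ref{lem:reducedStar} and \ref{lem:reducedTri} and the injectivity of $\alpha \mapsto \sa$ (Proposition \ref{prop:imageASP}). The key observation is that the ``extra term'' appearing in each of those formulas is automatically supported on $S_1$ (resp.\ $S_2$), so one can replace $S$ by $S_1$ (resp.\ $S_2$) for free, and the replaced product is then visibly reduced.

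First I would record that, by Lemma \ref{lem:starTrans} with $s = \sa$,
$\sa \star s_{\sigma_S}(a,b) = \sa(a,b) + \delta[\,b-1 \in S \text{ and } \alpha(b-1) < a \leq \alpha(b)\,]$.
The existence of an integer $a$ with $\alpha(b-1) < a \leq \alpha(b)$ forces $\alpha(b-1) < \alpha(b)$, so whenever the indicator is $1$ we already have $b-1 \in S_1$; conversely $b-1 \in S_1 \subseteq S$. Hence the right-hand side is unchanged upon replacing $S$ by $S_1$, so $\sa \star s_{\sigma_S} = \sa \star s_{\sigma_{S_1}}$ as slipface functions, and Proposition \ref{prop:imageASP} yields $\alpha \star \sigma_S = \alpha \star \sigma_{S_1}$. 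The identical bookkeeping applied to $\sa \tll s_{\sigma_S}(a,b) = \sa(a,b) - \delta[\,b-1 \in S \text{ and } \alpha(b) < a \leq \alpha(b-1)\,]$ shows the extra term is supported on $S_2$, giving $\alpha \tll \sigma_S = \alpha \tll \sigma_{S_2}$.

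It remains to identify these with ordinary products. Since $S$ contains no two consecutive integers, the supports of the transpositions $\{\sigma_n : n \in S\}$ are pairwise disjoint, so $\sigma_{S_1}^{-1} = \sigma_{S_1}$ with $\Inv(\sigma_{S_1}) = \{(n,n+1) : n \in S_1\}$, and likewise for $S_2$. If $n \in S_1$ then $\alpha(n) < \alpha(n+1)$, so $(n,n+1) \notin \Inv(\alpha)$; thus $\Inv(\alpha) \cap \Inv(\sigma_{S_1}^{-1}) = \emptyset$, and Lemma \ref{lem:reducedStar} gives $\alpha \star \sigma_{S_1} = \alpha\sigma_{S_1}$. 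Dually, if $n \in S_2$ then $\alpha(n) > \alpha(n+1)$, so $(n,n+1) \in \Inv(\alpha)$; thus $\Inv(\sigma_{S_2}^{-1}) \subseteq \Inv(\alpha)$, and Lemma \ref{lem:reducedTri} gives $\alpha \tll \sigma_{S_2} = \alpha\sigma_{S_2}$. Chaining the two steps proves $\alpha \star \sigma_S = \alpha\sigma_{S_1}$ and $\alpha \tll \sigma_S = \alpha\sigma_{S_2}$; specializing to $S = \{n\}$ recovers the $\sigma_n$ statements of Theorems \ref{thm:starExists} and \ref{thm:tll}.

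I do not anticipate a real obstacle: the only subtlety is the observation that ``$b-1 \in S$'' in the Lemma \ref{lem:starTrans} formulas may be tightened to ``$b-1 \in S_1$'' (resp.\ $S_2$) at no cost, which decouples the two halves of the argument. A self-contained alternative avoiding Lemma \ref{lem:starTrans} is to compute $s_{\alpha\sigma_{S_1}}(a,b) = \#\{m \geq b : (\alpha\sigma_{S_1})(m) < a\}$ directly: $\alpha\sigma_{S_1}$ differs from $\alpha$ only by interchanging the values at $n$ and $n+1$ for each $n \in S_1$, and such an interchange changes $\#\{m \geq b : \cdot\}$ only when exactly one of $n,n+1$ is $\geq b$, i.e.\ when $n = b-1$, in which case the discrepancy is precisely $+\delta[\alpha(b-1) < a \leq \alpha(b)]$; the analogous count for $\tll$ produces $-\delta[\alpha(b) < a \leq \alpha(b-1)]$. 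Either route is routine.
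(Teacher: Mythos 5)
Your proof is correct, and it is built from the same core ingredients the paper uses (Lemma \ref{lem:starTrans}, Lemmas \ref{lem:reducedStar} and \ref{lem:reducedTri}), but it is organized differently in a way worth noting. The paper factors $\sigma_S = \sigma_{S_2}\sigma_{S_1}$ as a reduced product, applies Corollary \ref{cor:sasss} to peel off the inert factor $\sigma_{S_2}$ (resp.\ $\sigma_{S_1}$), and then invokes associativity of $\star$ (and the identity $(s \tll t) \tll u = s \tll (t \star u)$ from Lemma \ref{lem:sfAlgebra}) to conclude. Your proof instead reads the exact slipface $\sa \star s_{\sigma_S}$ directly off Lemma \ref{lem:starTrans}, observes that the indicator $\delta[\,b-1 \in S, \alpha(b-1) < a \leq \alpha(b)\,]$ already forces $b-1 \in S_1$, and so identifies $\sa \star s_{\sigma_S}$ with $\sa \star s_{\sigma_{S_1}}$ by inspection (and then finishes with the reduced-product lemma as the paper does). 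This eliminates the need to factor $\sigma_S$, to verify that the factorization is reduced, and to invoke associativity at all — it trades those structural facts for a pointwise check of a formula already in hand, which is slightly leaner. Your alternative closing paragraph (recomputing $s_{\alpha\sigma_{S_1}}$ directly from the definition) would even let you bypass Lemma \ref{lem:reducedStar}, though as written you rightly just cite it. One small point: you should make explicit that $\Inv(\sigma_{S_1}) = \{(n,n+1): n \in S_1\}$ relies on $S$ (hence $S_1$) containing no two consecutive integers, so that $\sigma_{S_1}$ has no non-adjacent inversions — you state the disjoint-support fact but the inversion computation is what the reduced-product check actually needs; this is easily seen and does not affect correctness.
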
 

\begin{proof}
\newcommand{\stwo}{\sigma_{S_2}}
By definition, $\sigma_S = \sigma_{S_1} \sigma_{S_2} = \sigma_{S_2} \sigma_{S_1}$. Since $S_1, S_2$ are disjoint, both these products are reduced.
Corollary \ref{cor:sasss} implies that $\alpha \star \sigma_{S_2} = \alpha$ and $\alpha \resL \sigma_{S_1} = \alpha$.
Lemma \ref{lem:reducedStar} implies that $\alpha \starr \sigma_{S_1} = \alpha \sigma_{S_1}$ and Lemma \ref{lem:reducedRes} implies that $\alpha \resLr \sigma_{S_2} = \alpha \sigma_{S_2}$. Putting this together and using associativity and Lemma \ref{lem:sfAlgebra},
$\alpha \star \sigma_S = (\alpha \star \sigma_{S_2}) \star \sigma_{S_1} = \alpha \star \sigma_{S_1} = \alpha \sigma_{S_1}$, and $\alpha \resL \sigma_S = (\alpha \resL \sigma_{S_1}) \resL \sigma_{S_2} = \alpha \resL \sigma_{S_2} = \alpha \sigma_{S_2}$, as desired.
\end{proof}

So Demazure products and residuals may be conveniently computed by factoring one of the permutations into adjacent transpositions, and this can also be slightly ``parallelized'' by considering many non-overlapping adjacent transpositions at once.
Theorem \ref{thm:alphaStarSigma} implies the last sentences of Theorems \ref{thm:starExists} and \ref{thm:resL} from the introduction, by taking $S = \{n\}$.

\subsection{Permutations with finitely many inversions}
Let $G$ denote the group of permutations $\alpha$ such that $\Inv(\alpha)$ is finite. Alternatively, $G$ is the semidirect product $S_\ZZ \rtimes \ZZ$ of the symmetric group of $S_\ZZ$ of permutations fixing all but finitely many integers with the group $\ZZ \cong \{\iota_\chi: \chi \in \ZZ\}$ of shift permutations.
Then $G$ is closed under $\resL$ in a strong sense: for any $\alpha \in G$ and $\beta \in \asp$ (not necessarily in $G$), $\alpha \resL \beta \in G$. This is because $\alpha \resL \beta \leq_R \alpha$ (Lemma \ref{lem:invRes}), so $\Inv ((\alpha \resL \beta)^{-1}) \subseteq \Inv (\alpha^{-1})$ is finite. 

This group is also closed downward, and therefore closed under $\star$. This can be seen from a quick induction argument. If $\alpha \lechi \beta$, and $\Inv (\beta) \neq \emptyset$, then there exists some adjacent inversion $(n,n+1) \in \Inv (\beta)$. Then $\alpha \resL \sigma_n \leq \beta \resL \sigma_n$. Theorem \ref{thm:alphaStarSigma} implies that $\beta \resL \sigma_n = \beta \sigma_n$ has one fewer inversion, and $\alpha \resL \sigma_n$ is either $\alpha$ or $\alpha \sigma_n$, which either both have finitely many inversions or both do not. In the base case, if $\beta$ has no inversions, then $\beta = \iota_{\cb}$, so $\alpha = \iota_{\cb}$ as well.

In fact, with just a bit more care the inductive argument above shows the \emph{subword property} \cite[Theorem 2.2.2]{bjornerBrenti}: if $\beta = \iota_\chi \sigma_{n_1} \cdots \sigma_{n_\ell}$, where $\ell = \# \Inv(\beta)$, and $\alpha \lechi \beta$, then there exists a subsequence $1 \leq i_1 < \cdots < i_m \leq \ell$ such that $\alpha = \iota_\chi \alpha_{i_1} \cdots \alpha_{i_m}$ and $m = \# \Inv(\alpha)$. In particular, $\# \Inv(\alpha) \leq \# \Inv (\beta)$. This fact provides quick proofs of the following two inequalities. If both $\alpha,\beta$ have finitely many inversions, then
$$\# \Inv (\alpha \star \beta) \leq \# \Inv(\alpha) + \# \Inv(\beta)
\mbox{ and }
\# \Inv (\alpha \resL \beta) \geq \# \Inv(\alpha) - \# \Inv(\beta).
$$
For the first, write $\alpha \star \beta = \alpha_1 \beta$, with $\alpha_1 \leq \alpha$ (Theorem \ref{thm:starGreedy}). Equality holds if and only if $\alpha \starr \beta$. For the second, write $\alpha \resL \beta = \alpha \beta_1$ with $\beta_1 \leq \beta$ (Theorem \ref{thm:resLStingy}). Equality holds if and only if $\beta^{-1} \leq_L \alpha$.

The stronger form of closure under $\resL$ allows the following strong form of the reduction theorem. This follows from the second paragraph of Theorem \ref{thm:reduce}.

\begin{prop}
Suppose that $\alpha \star \beta \geq \gamma$, where $\gamma$ (but not necessarily $\alpha$ or $\beta$) has finitely many inversions. Then there exist $\alpha_1 \lechi \alpha, \beta_1 \lechi \beta$, with finitely many inversions, such that $\alpha_1 \starr \beta_1 = \gamma$.
\end{prop}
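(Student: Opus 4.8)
The plan is to quote the second paragraph of Theorem \ref{thm:reduce} essentially verbatim and then observe that, under the extra hypothesis on $\gamma$, the two permutations it produces automatically land in $G$. Concretely, from $\alpha \star \beta \geq \gamma$ that theorem hands us $\alpha_1 = \gamma \tll \beta^{-1}$ and $\beta_1 = \alpha_1^{-1} \tlr \gamma$ with $\alpha_1 \lechi \alpha$, $\beta_1 \lechi \beta$, and $\alpha_1 \star \beta_1 = \alpha_1 \beta_1 = \gamma$; by Lemma \ref{lem:reducedStar} the equality $\alpha_1 \star \beta_1 = \alpha_1 \beta_1$ is precisely the statement $\alpha_1 \starr \beta_1$, so $\alpha_1 \starr \beta_1 = \gamma$. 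Thus the only remaining point is that $\alpha_1$ and $\beta_1$ have finitely many inversions.

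For $\alpha_1$ I would use the strong closure of $G$ under $\tll$ recorded at the beginning of this subsection: since $\gamma \in G$ and $\beta^{-1} \in \asp$, we get $\alpha_1 = \gamma \tll \beta^{-1} \in G$ (equivalently, Lemma \ref{lem:invTri} gives $\alpha_1 \leq_R \gamma$, so $\Inv(\alpha_1^{-1}) \subseteq \Inv(\gamma^{-1})$ is finite). For $\beta_1$ I would first rewrite $\beta_1 = \alpha_1^{-1} \tlr \gamma$ using the identity $\alpha \tlr \beta = (\beta^{-1} \tll \alpha^{-1})^{-1}$ from Theorem \ref{thm:tllExists}, obtaining $\beta_1 = (\gamma^{-1} \tll \alpha_1)^{-1}$; since $\gamma^{-1} \in G$ (because $G$ is a group) and $\alpha_1 \in \asp$, the same strong closure gives $\gamma^{-1} \tll \alpha_1 \in G$, hence $\beta_1 \in G$ (equivalently, Corollary \ref{cor:reducedTlr} gives $\beta_1 \leq_L \gamma$, so $\Inv(\beta_1) \subseteq \Inv(\gamma)$ is finite). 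Combining, $\alpha_1, \beta_1 \in G$, $\alpha_1 \lechi \alpha$, $\beta_1 \lechi \beta$, and $\alpha_1 \starr \beta_1 = \gamma$, as desired.

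I do not anticipate a real obstacle: the substantive content is already contained in Theorem \ref{thm:reduce} together with the weak-order bounds of Lemma \ref{lem:invTri} and Corollary \ref{cor:reducedTlr}. The only thing to be careful about is the bookkeeping of which weak order applies to which factor, so that the inclusion of inversion sets points in the right direction in each case ($\leq_R$ controls $\Inv$ of the inverse for $\alpha_1$, and $\leq_L$ controls $\Inv$ directly for $\beta_1$).
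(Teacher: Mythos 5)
Your proof is correct and matches the paper's intent: the paper's one-line proof defers to the second paragraph of Theorem \ref{thm:reduce} and relies on the ``strong'' one-sided closure of $G$ under $\tll$ established a few lines earlier in that subsection, which is exactly what you spell out. Your two parallel justifications for $\alpha_1, \beta_1 \in G$ (via the strong closure, or directly via Lemma \ref{lem:invTri} / Corollary \ref{cor:reducedTlr}) are both valid and equivalent, and the bookkeeping about which weak order controls which inversion set is handled correctly.
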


Observe also that permutations with finitely many inversions must also have bounded difference and finite essential set.

\subsection{The (extended) affine symmetric groups}
\label{ssec:extAffine}

Fix an integer $k \geq 2$. The \emph{extended affine symmetric group} of modulus $k$ is the group of permutations $\alpha$ such that $\alpha(n+k) = \alpha(n) + k$ for all $n \in \ZZ$. This condition is equivalent to $\iota_k \alpha \iota_{-k} = \alpha$, and therefore is equivalent to the condition 
$$\sa(a,b) = \sa(a+k,b+k) \mbox{ for all } a,b \in \ZZ.$$
See for example \cite[\S 8.3]{bjornerBrenti} for background on affine symmetry groups. In particular, see \cite[Theorem 8.3.7]{bjornerBrenti} for discussion of functions like the slipfaces $\sa$ of the present paper in the context of these groups.

It is immediate from the definitions of $\star$ and $\resL$ that if $\sa, \sbe$ satisfy this condition, then so does $\sab$. So the extended affine symmetric group is closed under $\star$ and $\resL$. As mentioned in Example \ref{eg:extAffine}, the \emph{affine symmetric group} $\ts_k$ is the subgroup of the shift-$0$ permutations in the extended affine symmetric group. Since shift-$0$ permutations are closed under $\star$ and $\resL$, so is $\ts_k$.

The affine symmetric group $\ts_k$ is generated by the permutations $\widetilde{\sigma}_n = \sigma_{n + k \ZZ}$ (one quick way to see this is to define the length of $\alpha \in \ts_k$ to be the number of inversions $(m,n)$ with $0 \leq m < k$, and show that this is finite and, if positive, can be reduced by multiplying by some $\widetilde{\sigma}_n$). Therefore the operations $\star$ and $\resL$ can be easily computed via factorization into the elements $\widetilde{\sigma}_n$, using Theorem \ref{thm:alphaStarSigma}. If $S = n + k \ZZ$ in the theorem statement, and $\alpha \in \ts_k$, then one of $S_1, S_2$ is empty, so one of $\alpha \star \widetilde{\sigma}_n, \alpha \resL \widetilde{\sigma}_n$ is $\alpha \widetilde{\sigma}_n$, and the other is $\alpha$, according to whether or not $\alpha(n) < \alpha(n+1)$.

Since $\ts_k$ is a Coxeter group, this description of $\star$ and $\resL$ is known, so there is nothing novel here except verifying that the standard definition of these operations in a Coxeter group accords with the definition on $\asp$ via Equation \eqref{eq:sab}.

\appendix
\section{\texorpdfstring{A geometric interpretation of the min-plus formula for $\star$}{A geometric interpretation of the min-plus formula for star}}
\label{app:geometric}

The Demazure product on $S_d$ arises naturally in the geometry of flag varieties and Schubert calculus; we briefly summarize a situation that provides useful intuition for Equation \eqref{eq:sab}. This discussion parallels the discussion in \cite[\S 2.3]{cpRR}, although the notation is different.
 
Elements of $S_d$ can be used to measure the ``distance'' between two complete flags in an $n$-dimensional vector space; the identity permutation corresponds to identical flags, and the descending permutation to transverse flags. Let $H$ be a $d$-dimensional vector space. Let $U_\sbu = (U_a)_{0 \leq a \leq d}, V_\sbu = (V_b)_{0 \leq b \leq d}$ be two complete flags in $H$, indexed by dimension. 
There exists a unique permutation $\sigma = \sigma(U_\sbu, V_\sbu)$ such that for all $0 \leq a,b \leq d$,
 \begin{equation}
 \label{eq:suv}
 \dim U_a \cap V_b = \# \{ n \leq b: \sigma(n) \leq a \} = a - s_\sigma(a+1,b+1).
 \end{equation}
 Note that this definition of $\sigma(U_\sbu, V_\sbu)$ differs from the one used in \cite{cpRR} and some other sources; we use it here to fit better the notation of this paper.
 Another description of $\sigma$ is via \emph{adapted bases.} A basis $\cB$ is adapted to a flag $U_\sbu$ if $\cB \cap U_a$ is a basis for $U_a$, for all $0 \leq a \leq d$. It is always possible to find a basis $\cB$ that is adapted to both of two given flags $U_\sbu, V_\sbu$; this can be proved by an argument in Gaussian elimination. If this basis is ordered $\{u_1, \cdots, u_d\}$ such that $\{u_1, \cdots, u_b\}$ is a basis of $V_b$ for all $b$, then there is a unique permutation $\sigma$ of $\{1, \cdots, d\}$ such that for all $a$, $\{ u_{\sigma(1)}, \cdots, u_{\sigma(a)} \}$ is a basis for $U_a$. Then $U_a \cap V_b$ has basis $\{ u_i: i \leq b \mbox{ and } \sigma(i) \leq a\}$, and Equation \eqref{eq:suv} follows.
 
For example, if $U_\sbu = V_\sbu$, then 
$\dim U_a \cap V_b = \min(a,b)$, so $$s_\sigma(a,b) = a-1-\min\{a-1,b-1\} = \max \{0,a-b\},$$
 and $\sigma(U_\sbu, V_\sbu)$ is the identity permutation. At the other extreme, if the flags are transverse then $\dim U_a \cap V_b = \max\{0,a + b - d\}$ and $\sigma(U_\sbu, V_\sbu)$ is the descending permutation.
 
The Demazure product answers the question: given \emph{three} flags $U_\sbu, V_\sbu, W_\sbu$ and two permutations $\alpha, \beta$, if $\sigma(U_\sbu, V_\sbu) = \alpha$ and $\sigma(V_\sbu,W_\sbu) = \beta$, then what is the Bruhat-maximum of all possible $\sigma(U_\sbu, W_\sbu)$?

To see why the Demazure product answers this question, note that for all $a,b,\ell \in \{0,\cdots, d\}$,
$$\dim U_a \cap W_b \geq \dim U_a \cap V_\ell \cap W_b  \geq \dim U_a \cap V_\ell + \dim V_\ell \cap W_b - \dim V_\ell.$$
In the notation of slipface functions, this may be rewritten, after subtracting both sides from $a$, as
$$s_{\sigma(U_\sbu, W_\sbu)}(a+1,b+1) \leq s_{\sigma(U_\sbu, V_\sbu)}(a+1,\ell+1) + s_{\sigma(V_\sbu, W_\sbu)}(\ell+1,b+1).$$

Taking the maximum over $\ell \in \{0, \cdots, d\}$, Corollary \ref{cor:SdMin} gives
$$\sigma(U_\sbu, W_\sbu) \leq \sigma(U_\sbu, V_\sbu) \star \sigma(V_\sbu, W_\sbu).$$
In fact, for ``generic'' choices of flags, equality is obtained (see e.g. \cite[Theorem 1.1]{cpRR}).

The discussion in this section can be generalized, with some care, from $S_d$ to $\asp$ if one considers infinite flags, indexed by $\ZZ$, in an infinite-dimensional vector space. Rather than expressing the dimension of $U_a \cap V_b$ as $a - s_\sigma(a+1,b+1)$, we may write $\operatorname{rank} \left( U_a \to H / V_b \right) = s_{\sigma}(a+1,b+1)$, which is applicable in this infinite-dimensional setting provided that these ranks are all finite.

\section{Applications to curves and graphs}
\label{app:curvesGraphs}
\newcommand{\st}{s_\tau}
This paper originated in service of applications to Brill-Noether theory of curves and graphs.
We summarize the basic ideas without full details in this appendix to motivate the content of this paper; for more details the reader may consult the references mentioned.
This appendix assumes vocabulary from algebraic curves and the divisor theory of metric graphs.
The basic goal in both settings is to provide a toolkit for inductive arguments, in which the geometry of a curve or graph with two marked points is reduced to the geometry of \emph{two} such curves/graphs that are glued at a single point.

Applications to algebraic curves are developed in \cite{hbnDemazure}. Let $C$ be a smooth projective algebraic curve over an algebraically closed field, and let $p,q$ be two distinct points on $C$. Then a line bundle $\cL$ on $C$ determines a permutation $\tau = \tau_\cL^{p,q}$ by
\begin{eqnarray}
\label{eq:tauL}
h^0(\cL(ap-bq)) &=& \# \{ n \geq b:\ \tau(n) \leq a \}\\
&=& \st(a+1,b).
\end{eqnarray}

The existence of $\tau$ follows from Proposition \ref{prop:alphaExists}, which also proves that $\tau$ has bounded difference. The submodularity of $s_\tau$ amounts to the following linear algebraic observation.
$$h^0(\cL) - h^0(\cL(-p)) - h^0(\cL(-q)) + h^0(\cL(-p-q)) = \dim H^0(\cL) / \left[ H^0(\cL(-p)) + H^0(\cL(-q)) \right] \geq 0$$
Call $\tau^{p,q}_\cL$ the \emph{transmission permutation} of $\cL$ with respect to $p,q$. 
The shift of $\tau^{p,q}_\cL$ is given by the Euler characteristic as $\chi_\tau = \chi(\cL)-1 = d-g$, where $d = \deg \cL$ and $g$ is the genus of $C$.
Reversing this process, we may define for any $\tau \in \asp$ a \emph{transmission locus}
$$W^\tau(C,p,q) = \{ [\cL] \in \Pic^{\chi_\tau + g}(C):\ \tau^{p,q}_\cL \geq \tau \}.$$
In particular, for any choice of $r,d$, we may consider the permutation $\gamma = \gamma^{g-d+r}_{r+1}$ from Example \ref{eg:bnPerm}, for which $\chi_\gamma = d-g$ and $\ess(\gamma) = \{(1,0)\}$. Therefore
\begin{eqnarray}
W^\gamma(C,p,q) &=& W^r_d(C)\\
&:=&  \{ [\cL] \in \Pic^d(C):\ h^0(C,\cL) \geq r+1 \}.
\end{eqnarray}
This locus is called a \emph{Brill--Noether locus}. These loci play a central role in the theory of algebraic curves. So these transmission loci provide a reframing, and generalization, of Brill--Noether loci. 

In another direction, there has been substantial recent interest in \emph{Hurwitz--Brill--Noether theory}, which concerns the classification of line bundles on a general $k$-gonal curve of genus $g$. See for example \cite{larsonLarsonVogt} and the references therein. The principal objects of study are \emph{splitting loci} $W^{\vec{e}}(C)$, for $k$-gonal curves $C$. Transmission loci are a useful tool for studying these loci in a special case: if $(C,p,q)$ is a $k$-gonal curve with $p,q$ points of total ramification for the degree-$k$ cover $\pi: C \to \PP^1$, then for every splitting type $\vec{e}$ there exists an \emph{extended $k$-affine permutation} $\gamma_{\vec{e}}$ such that
$$W^{\gamma_{\vec{e}}}(C,p,q) = W^{\vec{e}}(C).$$

The utility of this reframing is that transmission loci are very well-suited to induction arguments, once the Demazure product is in hand. 
The construction of transmission permutations and transmission loci can be generalized in a natural way to chains of smooth curves. The Demazure product provides a ``gluing'' operation. Precisely, if $(C_1, p_1, q_1), (C_2,p_2,q_2)$ are two twice-marked smooth curves, $X~=~C_1~\cup~C_2$ is the nodal curve obtained by gluing $q_1$ to $p_2$, and $\cL$ is a line bundle on $X$ restricting to $\cL_1$ on $C_1$ and $\cL_2$ on $C_2$, then
\begin{equation}
\label{eq:glueCurves}
\tau_\cL^{p_1, q_2} = \tau_{\cL_1}^{p_1, q_1} \star \tau_{\cL_2}^{p_2,q_2}.
\end{equation}
\begin{rem}
The apparently strange definition of $\tau^{p,q}_\cL$, in which $\st(a+1,b)$ appears rather than $\st(a,b)$, was chosen so that \eqref{eq:glueCurves} would be as simple and intuitive as possible.
\end{rem}
In other words, the transmission locus on a union of two curves (or chains of curves) decomposes:
\begin{equation}
\label{eq:WtauDecomp}
W^\tau(C, p_1, q_2) = \bigcup_{\alpha \star \beta \geq \tau} W^\alpha(C_1, p_1, q_1) \times W^\beta(C_2, p_2, q_2).
\end{equation}
This union is indexed over quite a large set: all pairs of permutations $\alpha, \beta \in \asp$ with these shifts such that $\alpha \star \beta \geq \tau$. The utility of the reduction Theorem \ref{thm:reduce} is to shrink this to a \emph{finite} union of pieces that all have the same expected dimension. Namely, 
\begin{equation}
\label{eq:WtauRedDecomp}
W^\tau(C, p_1, q_2) = \bigcup_{\alpha \starr \beta = \tau} W^\alpha(C_1, p_1, q_1) \times W^\beta(C_2, p_2, q_2).
\end{equation}
We have omitted the constraints on $\ca,\cb$ from the notation for simplicity. Equation \eqref{eq:WtauRedDecomp} now provides a convenient inductive tool for analyzing both
\begin{enumerate}
\item local geometry of transmission loci, and thereby Brill--Noether loci, and
\item enumerative aspects of transmission loci, and thereby Brill--Noether loci.
\end{enumerate}
In particular, \cite{hbnDemazure} uses this perspective to obtain a new proof of the classical Brill--Noether theorem and the main dimension upper bound of Hurwitz--Brill--Noether theory within a single framework.

A completely parallel story may be developed for finite graphs, and also for metric graphs. Details and applications can be found in \cite{pflBriefTropBN,pflSolomon}. Let $G$ be either a graph or metric graph, and $p,q$ two distinct vertices on $G$. If $D$ is a divisor on $G$, then we may associate a slipface function to $D$, by
\begin{equation}
\label{eq:tauD}
s_D^{p,q}(a,b) = r(D + (a+1)p - bq) + 1.
\end{equation}
Here $r$ is the Baker-Norine rank. 
As in the curve case, the shift of this slipface is $d-g$.
Unfortunately, this slipface need not be submodular (though it is always Clifford), so there may not be a well-defined ``transmission permutation.'' 
The problem, compared to algebraic curves, is that the Baker-Norine rank is not the dimension of a vector space. However, it appears that in many cases, such as chains of loops, all divisors do have submodular slipfaces.

In either case, this slipface still satisfies an easy-to-remember gluing equation. Let $G_1$ be a (metric) graph with two points $p_1, q_1$, and $G_2$ a (metric) graph with two points $p_2, q_2$. Let $G$ be obtained by gluing $q_1$ to $p_2$. Then for any pair of divisors $D_1$ on $G_1$, $D_2$ on $G_2$, if we let $D$ denote the sum of these two divisors on $G$, then the analog of Equation \eqref{eq:glueCurves} holds for (metric) graphs:
\begin{equation}
\label{eq:glueGraphs}
s_D^{p_1, q_2} = s_{D_1}^{p_1,q_1} \star s_{D_2}^{p_2, q_2}.
\end{equation} 
This framework now provides a tool for the Brill--Noether theory of chains of graphs, in the same fashion as for chains of curves.

The paper \cite{pflBriefTropBN} studies this situation in a simple case: chains of twice marked loops, all of the same torsion order. This situation is somewhat simple, in that all transmission permutations involved belong to the same extended affine symmetric group. The stronger results in this paper make somewhat more subtle arguments possible, in which different torsion orders are used in a chain, and therefore it is necessary to intermingle extended affine permutations of different moduli. This is carried out, for example, in \cite{pflSolomon} to construct new families of Brill--Noether general graphs.

\section*{Acknowledgements}

This work was supported by a Miner D. Crary Sabbatical Fellowship from Amherst College, and the author was hosted by the Max Planck Institute for Mathematics in the Sciences during revisions and the Lean formalization process.
I am grateful to Will Erickson, Darij Grinberg, Allen Knutson, Thomas Lam, Leonardo Mihalcea, Travis Scrimshaw, and Mark Shimozono for references and suggestions, and to Alex Tiskin for directing me to his original proof of the min-plus formula for the Demazure product on symmetric groups.


\bibliographystyle{amsalpha}
\bibliography{demazure.bbl}
\end{document}